\newtheorem{theorem}{Theorem}[section]
\newtheorem{corollary}[theorem]{Corollary}
\newtheorem{lemma}[theorem]{Lemma}
\newtheorem{proposition}[theorem]{Proposition}
\newtheorem{conjecture}[theorem]{Conjecture}
\newtheorem*{thmA}{Theorem A}
\newtheorem*{thmC}{Theorem C}
\newtheorem*{thmD}{Theorem D}
\newtheorem*{propB}{Proposition B}
\theoremstyle{definition}
\newtheorem{definition}[theorem]{Definition}
\newtheorem{example1}{Example}
\theoremstyle{remark}
\newtheorem{remark}[theorem]{Remark}
\newtheorem*{remark*}{Remark}
\numberwithin{equation}{section}
\newcommand{\cala}{{\cal A}}
\newcommand{\calb}{{\cal B}}
\newcommand{\calf}{{\cal F}}
\newcommand{\calh}{{\cal H}}
\newcommand{\calm}{{\cal M}}
\newcommand{\cals}{{\cal S}}
\newcommand{\C}{{\mathbb C}}
\newcommand{\D}{{\mathbb D}}
\renewcommand{\H}{{\mathbb H}}
\newcommand{\N}{{\mathbb N}}
\newcommand{\R}{{\mathbb R}}
\newcommand{\Z}{{\mathbb Z}}
\newcommand{\chat}{{\widehat{\mathbb C}}}
\renewcommand{\hat}{\widehat}
\newcommand{\la}{\lambda}
\newcommand{\las}{\lambda^*}
\newcommand{\inv}{^{-1}}
\newcommand{\lo}{{\la_0}}
\renewcommand{\r}{\color{red}}
\newcommand{\pmf}{\calm_\infty}
\newcommand{\ps}{\Lambda}
\newcommand{\lap}{{\la'}}
\newcommand{\bA}{{\bf A}}
\newcommand{\ba}{{\bf a}}
\newcommand{\lp}{\H_l}
 \title{Stable components in the parameter plane of transcendental functions of finite type}
\author{\small N\'uria Fagella\thanks{Partially supported by spanish grants MTM2017-86795-C3-3-P and MDM-2014-0445 Mar\'ia de Maeztu and the catalan grant 2017 SGR 1374. }\\  
\small Dept.~de Matem\`atiques i Inform\`atica\\ 
\small Univ.~de Barcelona, Gran Via 585 \\ 
\small Barcelona Graduate School of Mathematics (BGSMath)\\
\small 08007 Barcelona, Spain\\  
\small {\tt fagella$@$maia.ub.es} 
\and 
\small Linda Keen\thanks{Partially supported by   PSC-CUNY grants 67260-00 45, 80209-05 20  }\\   
\small CUNY Graduate Center\\ 
\small  365 Fifth Avenue, New York \\ 
\small NY, NY 10016 \\  
\small {\tt LKeen$@$gc.cuny.edu, Linda$@$keenbrezin.us} 
} 
\begin{document} 

\maketitle  
\begin{abstract}
 We study the parameter planes of certain one-dimensional, dynamically-defined slices of holomorphic families of entire and meromorphic transcendental maps of finite type.  Our planes are defined by constraining the orbits of all but one of the singular values, and leaving free one asymptotic value. We study the structure of the regions of parameters, which we call {\em shell components},  for which the free asymptotic value tends to an attracting cycle of non-constant multiplier.   The exponential and the tangent families are examples that have been studied in detail, and the hyperbolic components in those parameter planes are shell components. Our results apply to slices of both entire and meromorphic maps. We prove that shell components are simply connected,  have a locally connected boundary and have no center, i.e., no parameter value for which the cycle is superattracting. Instead, there is a unique parameter in the boundary, the {\em virtual center}, which plays the same role. For entire slices, the virtual center is always at infinity, while for meromorphic ones it maybe finite or infinite. 
In the dynamical plane we prove, among other results, that the basins of attraction which contain only one asymptotic value and no critical points are simply connected. Our dynamical plane results apply without the restriction of finite type.
 \end{abstract}

\section{Introduction}
 Our starting point is a holomorphic family $\calf=\{f(x,z)=f_x(z) : X\times \C \rightarrow \hat\C \}$ of transcendental entire or meromorphic functions defined over a complex analytic manifold, the parameter space.  By a meromorphic map we mean a map with at least one pole which is not omitted. 
    Iterating $f_x$ for a given value of $x$  gives rise to a dynamical system.  A fundamental problem is to understand what bifurcations occur in the dynamics as one varies the parameter $x$.  
 In this generality, the problem is very difficult and no results exist.  Because the dynamics of these systems are controlled by the orbits of the singular values of the maps $f_x$, one effective way to approach the problem is to define one-dimensional subspaces, or ``slices'' through the manifold $X$  that constrain all but one of these singular values and analyze the structure within these slices. 
 For the family of quadratic polynomials this analysis has been carried out more or less completely. (See for example \cite{orsaynotes1,orsaynotes2,cargam}).  For polynomials of higher degree and rational maps, this is also true to a lesser extent (see e.g. \cite{rees95,BH1,BH2,bert}),  and this analysis  shows that behavior for quadratics is not special, but is a paradigm for rational dynamics as well.  To wit, one often sees copies of Mandelbrot sets in one dimensional slices of rational or transcendental families; the slice of degree two rational maps  with an attracting fixed point and fixed multiplier, or Newton's method for cubic polynomials are examples of this \cite{douhub, gold-keen2}).

The exponential  and the tangent family were the first examples of transcendental entire and meromorphic functions respectively  to be systematically studied (e.g. \cite{bakrip,rempethesis,devfagjar,KK}). The essential new feature for transcendental functions is that they have asymptotic values, which are, in effect, images of infinity along certain paths.  This introduces new phenomena into the structure of the parameter spaces that are not seen for rational maps.  In the same way that the Mandelbrot-like sets code the influence of the critical values of the dynamics of rational maps, these new phenomena code the influence of the asymptotic values on the dynamics of our families of transcendental maps.

This work is the first to find a context for studying  parameter spaces of  large classes of both transcendental entire and meromorphic maps.    Our goal is to show that in holomorphic families of such functions, there are natural one-dimensional slices in the parameter space  in which there are always certain kinds of components whose properties are characterized by the existence of asymptotic values.

We define a one-dimensional  slice by allowing only one ``free" asymptotic value to vary.  
Our main results 
concern properties of the parameter regions for which this free asymptotic value is attracted to an attracting cycle.  These regions, which we  call ``shell components", are the analogues in this setting of the hyperbolic components of the interior of the Mandelbrot-like sets for rational maps. 

To make these ideas precise,  consider the dynamical system formed by iterating an entire or  meromorphic transcendental map of the complex plane 
$$
f:\C \to \C\cup \{\infty\}.
$$
Because of  the essential singularity at infinity, the global degree of $f$ is infinite and the dynamics is richer than for rational maps. The exponential and the tangent maps are in this class.  Other examples are generated by applying  Newton's method   to an entire function, which almost always yields a transcendental meromorphic map as the root finder function to be iterated.

The dynamical plane of $f$ splits into two disjoint completely invariant subsets: the {\em Fatou set}, or points for which the family of iterates is well defined and form a normal family in some neighborhood; and its complement, the {\em Julia set}. The Fatou set is open and contains, among other types of components,  all basins of attraction of attracting periodic orbits. By contrast, the Julia set is closed in the sphere and it is either the whole sphere or has no interior.  It  can be characterized as the closure of the repelling periodic points or, if the map is meromorphic, as the closure of the set of poles and prepoles. For general background on meromorphic dynamics we refer to the survey \cite{bergweiler} and the references there.  

A key role in the dynamics of a transcendental map $f$ is played by points in the set $S(f)$ of  {\em singular values} of $f$; these are  either {\em critical values} (images of  zeroes of $f'$, the {\em critical points}), or {\em asymptotic values} (points $v = \lim_{t \to \infty} f(\omega(t))$ where $\omega(t) \to \infty$ as $t\to\infty$),  or accumulations thereof.  Indeed every periodic connected component of the Fatou set  is, in some sense, associated to the orbit of a point in $S(f)$.   For example,  basins of attraction of an attracting or parabolic cycle   must actually contain a singular value, but weaker conditions associating a singular value to  Siegel disks or Baker domains \cite{milnor,ber95,benfag2} and even  to wandering domains \cite{BFJKsing} also exist. Note that $S(f)$ coincides with the set of singularities of the inverse map.

Transcendental entire or meromorphic functions with a finite number of singular values have finite dimensional parameter spaces \cite{EL92,gold-keen}. These are  known as {\em finite type} maps and denoted by 
\[
\cals :=\{f:\C \to \C\cup \{\infty\} \text{\ entire or meromorphic}  \mid \#{S(f)} <\infty  \}.
\]
Maps in the class $\cals$ have several special properties such as the absence of wandering domains \cite{sullivan,gold-keen,EL92,bkl4} or the existence of at most a finite number of non-repelling cycles.

Entire transcendental functions,  whose dynamics have attracted a great deal of attention in the last few years, may  be thought of as  special cases of transcendental meromorphic maps in a wider sense, namely those with all their poles at infinity.   The simplest example is the exponential family $E_\la(z)=e^z+\lambda$.    By Iversen's theorem \cite{ivethesis} entire functions (and meromorphic functions with finitely many poles),  all have infinity as an asymptotic value, which may be taken to mean that infinity is a multiple pole.  
By contrast, meromorphic maps for which infinity is not an asymptotic value are at the other end of the spectrum:  they  necessarily have infinitely many poles and  this difference in behavior at infinity has consequences for the dynamical and parameter spaces (see e.g. \cite{berkot}).  The tangent family of maps $T_\lambda(z)=\lambda \tan(z)$ \cite{DK, KK, GaKo} is the simplest example of these. 
 This class, which we denote as 
 \[
\pmf :=\{ f \in \cals \mid \text{$\infty$ is not an asymptotic value}\},
\] 
in some sense captures the essence of meromorphic maps  since   the condition that infinity {\em is not}   an asymptotic value persists under small perturbations  of $f$ in $\cals$.

To center the discussion, we consider {\em dynamically natural families  (or slices)} of functions in $\cals$ which, roughly speaking, consist of one-dimensional slices of larger holomorphic families of entire or meromorphic maps, for which only one singular value is allowed to bifurcate at any given parameter value  (see Definition \ref{def:dns} for details). We concentrate our attention on the behavior of a marked asymptotic value called the {\em free asymptotic value}, and in particular, on the connected components of parameters for which this marked point is attracted to an attracting cycle whose basin contains no other singular value. We call these sets {\em shell components} and  they are the transcendental version of the hyperbolic components one finds in parameter planes of rational maps.  Note, however,  that  because  we allow  parabolic cycles to exist and attract a non-free singular value, the maps here need not be hyperbolic.  While shell components occur in the parameter planes for many families of entire functions, they are better understood in the broader context which includes meromorphic functions.  

The first part of the paper  deals with the properties of attracting basins in the dynamical plane of maps belonging to a shell component.  Our main results about the dynamical plane are  proved  in Section \ref{dynamical plane},  and are summarized (in a slightly weaker form) in the following statement. 
\begin{thmA}
Let $f$ be an entire or meromorphic function. Suppose that $a_0,\ldots, a_{p-1}$ is an attracting cycle of period $p\geq 1$ whose basin of attraction $\cala$ contains exactly one asymptotic value $v$ and no critical points. For $i=0,\ldots,p-1$ let $A_i$ be the component of $\cala$ containing $a_i$ and labeled so that $f(A_i)=A_{i+1 \pmod p}$ and $v\in A_1$. Set $A:=A_0\cup \cdots \cup A_{p-1}$, the immediate basin of attraction. Then:
\begin{enumerate}[\rm (a)]
\item Every component of $\cala$ is simply connected;
\item  $A$ contains no finite preimage of $v$;
\item $A_0$ is unbounded and maps infinite to one onto $A_1\setminus \{v\}$ while $f:A_j\to A_{j+1}$ is univalent for all $j\neq 0$;
\item If in addition,  $S(f)\cap \partial A=\emptyset $, then $A$ contains only one asymptotic tract of $v$.
\end{enumerate}
\end{thmA}

 Note that the only assumption on $f$ in Theorem A is that it is transcendental. In particular,  it may have infinitely many singular values.

The rest of the paper concentrates on properties of parameter spaces. A dynamically natural slice is represented by a one-dimensional manifold,  $\Lambda \subset X$, isomorphic to the complex plane with  perhaps a discrete set of punctures called {\em parameter singularities}, where,  for example, the function may reduce to a constant.
For simplicity   we require the parametrization to trace the position of the marked asymptotic value in an affine fashion.
Most of the best known one-parameter slices such as  quadratic rational maps with a super attracting cycle (quadratic polynomials),  cubic polynomials with a super attracting fixed point, exponential or tangent functions, and so on,  are dynamically natural slices of maps in different classes  where the parameter is an affine function of a singular value  (see Section \ref{sec:examples2}).  

If $\Lambda$ is a dynamically natural slice for the family $\calf$ of entire or meromorphic maps  in $\cals$,  we call the restriction of the family to $\Lambda$  a ``dynamically natural family'' and denote it by $\{f_\la\}_{\la\in\Lambda}$.  The slice  $\Lambda$ contains  different types of ``distinguished" parameter values which are solutions of algebraic or transcendental equations. Slightly abusing the usual terminology we call those parameter values for which  the free  singular value lands on a repelling cycle   {\em Misiurewicz parameters}.  {\em Centers}, parameters for which a critical point is periodic, are also distinguished parameters.    
Meromorphic transcendental maps have an  additional  type of distinguished parameter which does not occur for rational maps.  We call a parameter $\la$ a {\em virtual cycle parameter} if some iterate of the asymptotic value is a pole for $f_\la$. This name is motivated by the existence of  a {\em virtual cycle},  morally a ``cycle" that contains the point at infinity and the asymptotic value (see Definition \ref{def:vc}). Virtual cycle parameters are important in our discussion and they are generally quite abundant in the following sense (see Proposition \ref{dense}). 

\begin{propB}
Let   $\{f_\lambda \}_{\la \in \Lambda}$ be a dynamically natural family of entire or meromorphic  maps in $\cals$, and let $v=v(\la)$ be the free asymptotic value. Both the virtual cycle parameters, when they exist,    and the  Misiurewicz parameters   are dense in  the set 
\[
\calb(v) := \{ \lambda_0 \in \Lambda \mid \{\la \mapsto f_\la^n(v_\la)\}  \text{ is not a normal family in any neighborhood of $\la_0$}\}. 
\]
Moreover  $\calb(v)$ has no isolated points in $\ps$.
\end{propB}

The set $\calb(v) $ is known as the {\em activity locus} of the free asymptotic value and is part of the {\em bifurcation locus} $\calb$, or the set of parameters for which at least one of the singular values  bifurcates. It follows from our definition of a dynamically natural slice, that  the boundary of any shell component is a subset of $\calb(v)$. 

 General results of this nature have been shown for  parameters spaces of rational maps using bifurcation currents  (see for example \cite{bert,dujfav,duj,gaut}).

By definition, functions in a shell component of a dynamically natural slice have no critical points in the basin of the attracting cycle,  so the  multiplier of the cycle is never zero, and therefore the component has no {\em center}. We see however, that some virtual cycle parameters play the role of centers in our setting (see Theorem \ref{vc1}). 

\begin{definition}[Virtual center] 
Let $\Omega$ be a shell component of a dynamically natural  family $ \{f_\lambda \}_{\la \in \Lambda}$ of entire or meromorphic maps. We say that $\la^* \in (\partial \Omega \cap \Lambda)\cup \{\infty\}$ is a {\em virtual center} of $\Omega$, if there exists a sequence of $\la_n\in\Omega$ such that $\la_n\to\la^*$ and the multiplier $\rho_\Omega(\la_n)$ of the attracting cycle for $\la_n$ tends to $0$.
\end{definition}

 Here and throughout the paper,  when we say a point is  {\em finite} we mean that it  is not the point at infinity.
\begin{thmC}
Let   $\{f_\lambda \}_{\la \in \Lambda}$ be a dynamically natural family of entire or meromorphic  maps in $\cals$ and let $\Omega$ be a shell component. Then, if  $\la^* \in \partial \Omega \cap \Lambda$ is a virtual center,  $\la^*$ is a virtual cycle parameter.  Thus, if the functions are entire,  there are no finite virtual centers.
\end{thmC}

Although maps in a shell component might be non-hyperbolic, a shell component $\Omega$ shares many properties with hyperbolic components of known families like the exponential or tangent families, \cite{rempethesis,KK}. Indeed,  shell components can be parametrized  by the multiplier map
\[
\rho=\rho_\Omega : \Omega \longrightarrow \D^*
\]
where $\D^*:=\D\setminus \{0\}$, and $\rho(\lambda)$ is the the multiplier of the attracting cycle to which $v(\lambda)$ is attracted. Since the multiplier is never zero, $\rho$ is a covering (see Theorem~\ref{multcov}) and hence shell components are either simply connected and $\rho$ has infinite degree or else they are  homeomorphic to $\D^*$, where the puncture is a parameter singularity (see Theorem \ref{coverings} and Corollary \ref{dichotomy}).  

Our last result describes the internal structure and the boundary of a shell component and establishes the existence and uniqueness of the virtual center (see Theorem \ref{boundarycont},  and corollaries \ref{cor:uniquevc}, \ref{cor:uniquevc}  and \ref{cor:internalrays}). 

\begin{thmD}
Let   $\{f_\lambda \}_{\la \in \Lambda}$ be a dynamically natural family of entire or meromorphic  maps in $\cals$  and  let $\Omega$ be a simply connected shell component. Let $p\geq 1$ be the period of the attracting cycle throughout $\Omega$. Then:
\begin{enumerate}[\rm (a)]
\item $\partial \Omega$ is locally connected and hence a continuous curve.
\item $\partial \Omega$ has a unique virtual center.
\item If $p=1$  or if the family is entire, the virtual center is at infinity and therefore $\Omega$ is unbounded. 
\item Define the internal ray in $\Omega$ of angle $\theta$ as 
\[
 R_\Omega(\theta):=\{  \rho^{-1}_{\Omega}(\exp(t+2\pi i \theta)),   \,  t\in (-\infty,0)\}.
\]
Then, all internal rays have one end at the virtual center and the other end at a point in $\partial \Omega$ (which a priori could be infinity). If the virtual center $\la^*$  is finite  no internal ray has both ends at $\la^*$. 
\end{enumerate}
\end{thmD}

When the shell component is not simply connected then the puncture, which is a parameter singularity, plays the role of the virtual center. An example of a doubly connected shell component can be seen in Example 1(d) in Section \ref{sec:examples2}. 

\begin{figure}[htb!]
\captionsetup{width=0.85\textwidth}
\begin{center}
\includegraphics[height=4cm]{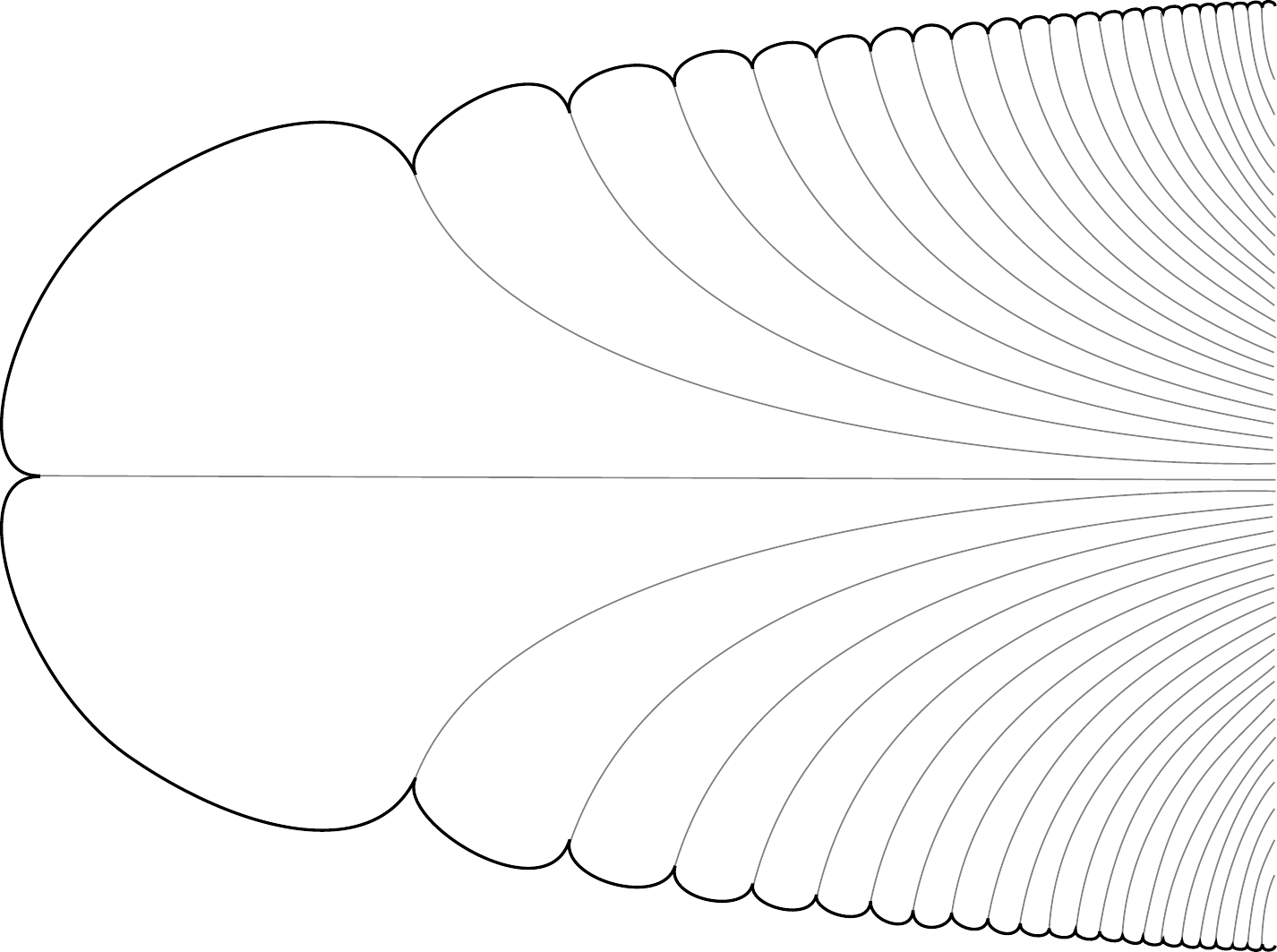} \hfil
\includegraphics[height=4cm]{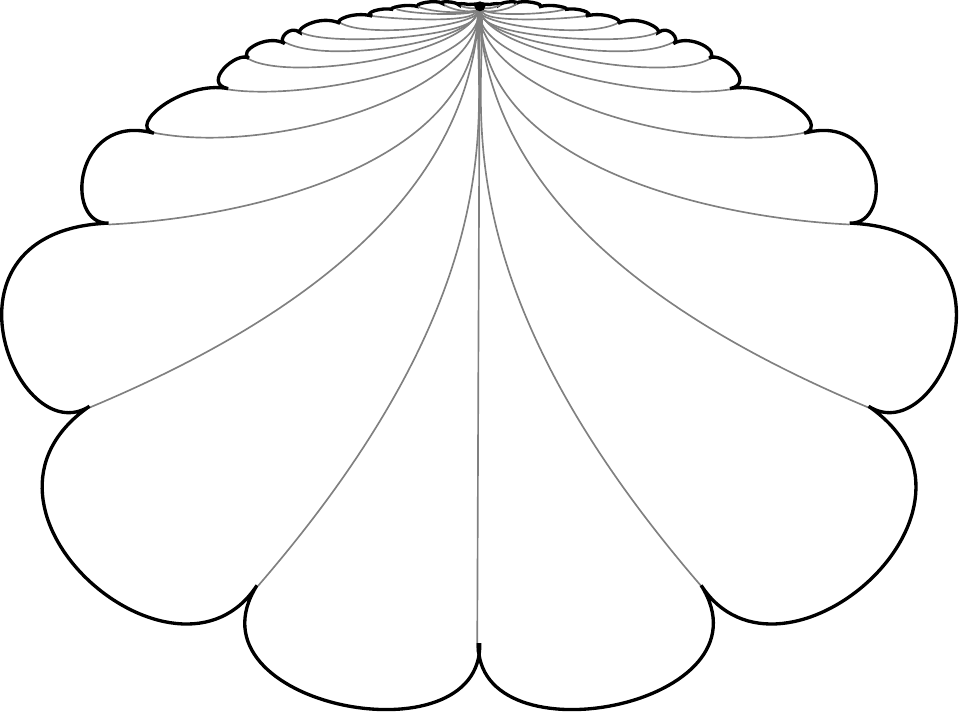}
\end{center}
\caption{\label{sketch}
 \small Sketch of two shell components and their internal rays. On the left (resp. right) the virtual center is at infinity (resp. at a finite point) and therefore the component is unbounded (resp. bounded).}
\end{figure}

Numerical experiments show that for meromorphic functions in $\pmf$  shell components of period larger than one are always bounded, that is, they have their virtual center at a finite parameter value, so that it seems reasonable to conjecture that this is so in general. At present,  we can only prove this fact for particular families like the tangent family. (See \cite{CK19}).   

The paper is structured as follows. In Section~\ref{sings} we recall the definition of singular values of transcendental functions and we state some standard theorems on the covering  and connectivity properties of their Fatou components.  In Section~\ref{dynamical plane} we discuss the properties of the dynamical planes of  these functions, proving Theorem A. In Section~\ref{slices} we define the {\em dynamically natural slices} of parameter spaces that are the main subject of the paper and in Section~\ref{sec:examples2} we give a number of examples.    In Section~\ref{Dist values} we define various types of distinguished parameter values that lie in the bifurcation locus and prove Proposition B.   Section~\ref{Shell} is the heart of the paper where, after some preliminary results, we prove Theorems C and D.  We conclude with an Appendix where we discuss a theorem of Nevanlinna that allows us to characterize certain functions of finite type.  We also extend this theorem to a somewhat larger class of functions.   

\noindent {\bf Acknowledgements} \ The authors would like to thank the referee for a very careful reading and helpful comments which have not only improved the exposition, but also the results.   We are also grateful to  CUNY Graduate Center and to IMUB at Universitat de Barcelona for their hospitality while this paper was in progress.

\section{Preliminaries and Setup}\label{sings}

\subsection{Singularities of the inverse function}  \label{subs:sings}

Let $f$ be a transcendental entire or meromorphic map. A point $v\in\C$ is a {\em singular value} of $f$ if some branch of $f\inv$  fails to be well defined in every small enough neighborhood of $v$. Singular values may be critical, asymptotic or accumulations thereof.   If $c$ is a  critical point,  that is,  a zero of $f'$  then its image $v=f(c)$ is a  {\em critical value}.  If there is a path $\omega(t)$ such that $\lim_{t \to 1} \omega(t)=\infty$ and $\lim_{t \to 1} f(\omega(t))=v$ then the limit $v$ is an {\em asymptotic value}  of $f$. Observe that the point at infinity is always an essential singularity, but it may be or not be a singular value. It is a critical value if and only if it is the image of a multiple pole of $f$ and it is an asymptotic value if some unbounded curve has an image that tends to infinity. For example, infinity is not a singular value for the tangent map.    

Singular values are classified in terms of the branches of $f^{-1}$ as follows (see \cite{berere},\cite{ive}).
 
 \begin{proposition}[Classification of singularities]\label{Singularities}
Let $f$ be an entire or meromorphic transcendental map.  For any $z\in\C$ and   $r >0$, let  $B(z,r)$ be the disk of radius $r$ centered at $z$.    Let $U_r$ be a connected component of $f^{-1}(B(z,r))$, chosen such that $U_r\subset U_{r'}$ if $r<r'$. Then there are only two possibities:
\begin{itemize}\item[(a)] $\bigcap_r U_r=\{p\}, p\in\C$
\item[(b)] $\bigcap_r U_r=\emptyset$.
\end{itemize}  
In case $(a)$,  $f(p)=z $, and either $f'(p)\neq 0$ and $z$ is a regular point, or $f'(p)=0$  and $z$ is a critical value.
In case $(b)$,   the chosen inverse branch with image $U_r$ defines a transcendental singularity over $z$, and it can be shown that  $z$ is an asymptotic value for $f$.  
\end{proposition}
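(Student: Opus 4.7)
The plan is to analyze the intersection $K := \bigcap_{r>0} U_r$ via local and non-local behavior of $f$, handling the two cases separately. The first two alternatives are essentially an exercise in local analysis, while case (b) — the assertion that an empty intersection forces $z$ to be an asymptotic value — is the substantive content and is essentially Iversen's theorem for the transcendental singularity determined by $\{U_r\}_r$.

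For the first step, I would show that if $K \neq \emptyset$ then $K$ consists of exactly one point. Pick $p \in K$; then $f(p) \in D(z,r)$ for every $r$, so $f(p) = z$. Since $f$ is non-constant meromorphic, $p$ is an isolated zero of $f - z$ of some finite multiplicity $m \geq 1$, and one finds a neighborhood $W$ of $p$ together with $\varepsilon > 0$ such that $f|_W : W \to D(z,\varepsilon)$ is a proper branched cover of degree $m$, ramified only at $p$. For $r < \varepsilon$, the component of $f^{-1}(D(z,r)) \cap W$ containing $p$ is a topological disk $V_r$ shrinking to $\{p\}$. Combining the nesting $U_r \subset U_\varepsilon$, the connectedness of $U_r$, and the fact that $p \in U_r$, one checks that $U_r \subset W$ for $r$ small and hence $U_r = V_r$. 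This gives $K = \{p\}$. The dichotomy between $f'(p) \neq 0$ (regular value) and $f'(p) = 0$ (critical value) then corresponds to $m = 1$ versus $m \geq 2$.

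For case (b), assume $K = \emptyset$ and construct a curve $\gamma : [0,1) \to \C$ with $\gamma(t) \to \infty$ and $f(\gamma(t)) \to z$ as $t \to 1$. Pick a strictly decreasing sequence $r_n \downarrow 0$ and points $w_n \in U_{r_n}$; since $U_{r_n}$ is an open connected subset of $\C$, hence path connected, choose an arc $\gamma_n$ from $w_n$ to $w_{n+1}$ inside $U_{r_n}$ (which contains $U_{r_{n+1}}$ by the nesting hypothesis). Concatenate and reparametrize so that the arc $\gamma_n$ occupies $[1 - 2^{-n}, 1 - 2^{-n-1}]$. For $t$ past the join-point of $\gamma_n$, one has $\gamma(t) \in U_{r_n}$, so $|f(\gamma(t)) - z| < r_n$, which forces $f(\gamma(t)) \to z$.

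The main obstacle is verifying $\gamma(t) \to \infty$ in $\C$. Suppose instead that some sequence $\gamma(t_k) \to q \in \C$ with $t_k \to 1$. By continuity of $f$ on $\C$ (and the fact that $q$ cannot be a pole, since $f(\gamma(t_k)) \to z$ is finite), one gets $f(q) = z$. I would then argue $q \in U_r$ for every $r > 0$, contradicting $K = \emptyset$: certainly $q \in \overline{U_r}$ because $\gamma(t) \in U_r$ eventually; if $q \in \partial U_r$ then, since $U_r$ is an entire connected component of the open set $f^{-1}(D(z,r))$, the boundary $\partial U_r$ is disjoint from $f^{-1}(D(z,r))$, forcing $f(q) \notin D(z,r)$ and contradicting $f(q) = z$. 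Thus $q \in U_r$ for all $r$, contradiction. Consequently $\gamma$ escapes to $\infty$ and $z$ is an asymptotic value; by the very definition of a transcendental singularity, the chosen compatible system $\{U_r\}$ of inverse branches determines one over $z$.
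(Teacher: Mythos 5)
The paper does not prove this proposition; it cites it as a classical result (Iversen, Bergweiler--Eremenko), so there is no ``paper's own proof'' to compare against. Your argument is a reasonable self-contained reconstruction, and the substantive part --- the construction in case (b) of the asymptotic path and the verification that it escapes to infinity --- is done correctly. The step that $q \in \overline{U_r}$ and $f(q) = z$ together force $q \in U_r$ is exactly the right observation, since $U_r$ is both open and closed in $f^{-1}(D(z,r))$, so $\partial U_r$ misses $f^{-1}(D(z,r))$.

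One spot in case (a) is justified by the wrong reason. You claim $U_r \subset W$ for small $r$ by invoking ``the nesting $U_r \subset U_\varepsilon$, the connectedness of $U_r$, and $p \in U_r$.'' Nesting and connectedness alone do not give this: $U_\varepsilon$ may be large, and a connected open set containing $p$ need not stay inside $W$. The correct mechanism is properness of the local branched cover $f|_W : W \to D(z,\varepsilon)$. Once $r$ is small enough that $\overline{V_r}$ is compactly contained in $W$, properness gives $f(\partial V_r) \subset \partial D(z,r)$, so $V_r$ is both open and relatively closed in $f^{-1}(D(z,r))$, hence a full connected component of $f^{-1}(D(z,r))$ in $\C$; since it shares the point $p$ with $U_r$, it equals $U_r$. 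This is a routine repair, but as written the step does not follow from what you cite. With that fixed, the proof is complete and more explicit than the paper, which simply defers to the references.
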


The map $f$ is said to be of {\em finite type} if it has a finite number of singular values.   Note that this implies all the singular values are isolated. 

If an asymptotic value $v$ is isolated,  the radius $r$ in the above proposition 
can be chosen small enough so that $f:U_r \rightarrow B(v,r) \setminus\{v\}$ is a universal covering map.  In this case $U_r$ is called an {\em asymptotic tract} for the asymptotic value $v$ and $v$ is called a {\em logarithmic singular value} or a {\em logarithmic singularity}.  The number of distinct asymptotic tracts of a given asymptotic value is called its {\em multiplicity}.  Note that this notion is not the same as the multiplicity of a critical value,   that is, the sum  of the multiplicities (local degree minus one) of $f$ over all its preimages.   Using this term in these two ways should not cause confusion.

We denote the set of singular values of $f$ by $S(f)$ and the post-singular set by
\[ 
P(f) = \overline{ \bigcup_{x \in S_f}  \bigcup_{n \geq 0} f^n(x)}. 
\]
%
By Iversen's Theorem \cite{ivethesis}  an entire transcendental function $f$ always has an asymptotic value at infinity.

\subsection{Mapping properties}

The following lemma is well known in algebraic topology. See for example \cite{Massey} for the general theory of coverings or \cite[Thm. 6.1.1.]{zheng10} for a proof of the lemma below.

Let $U,V\subset \C$. Recall that a map $f:U\to V$ is a {\em covering map}  if for every $z\in V$ and every small enough  neighborhood $N_z$ of $z$,  $f^{-1}(N_z)$ consists of a collection of disjoint topological disks $\{D_i\}_i$ such that $f:D_i\to N_z$ is a homeomorphism.  If $f:D_i\to N_z$ is equivalent to $z^d$ for some $d>1$, $f$ is a {\em branched covering of order $d-1$}. If $f$ is unbranched and $U$ is simply connected we say that $f$ is a {\em universal covering}.


\begin{lemma}[Holomorphic coverings of $\D$ and $\D^*$] \label{coverings}
Let $U\subset \chat$ be an open set, $\D$ be the unit disk and and $D^*=\D\setminus \{0\}$. 
\begin{enumerate}[\rm(a)]
\item If $f:U\to \D $ is a holomorphic  covering map, then $U$ is simply connected and $f$ is univalent. 
\item If $f:U\to \D^*$ is a holomorphic   covering map, then either 
\begin{itemize}
\item[(i)]  $U$ is conformally equivalent to $\D^*$ and there exists a biholomorphic mapping $\psi:U\to \D^*$,  such that 
$f=(\psi)^d$ for some $d\in \N$, or
\item[(ii)]  $U$ is simply connected and there exists a biholomorphic mapping $\psi: U \to \lp = \{z : {\rm Re} z<0 \}$ such that $f =\exp\circ \psi$.
\end{itemize}
\end{enumerate}
\end{lemma}

\section{Dynamical plane. Proof of Theorem A.}\label{dynamical plane}

 In this section we describe some properties of the dynamical plane  for  general entire or meromorphic maps. In particular, we do not assume our maps to have a finite number of singular values.  Theorem A in the introduction follows from the three propositions in this section, which are slightly stronger than the theorem itself. 

We are interested in basins of attraction of attracting cycles containing a unique asymptotic value $v$.  Since $v$ is the only singular value in the basin, it is isolated and is thus  a logarithmic singularity (see Section \ref{subs:sings}).   It follows that  if $V^*$ is a punctured neighborhood of $v$, there exists at least one asymptotic tract  among the components of $f^{-1}(V^*)$.   
  
We shall prove that under this weak assumption, a great deal can be said about the attracting basin. We begin with connectivity properties.

\begin{proposition}[Simply connected basin]\label{sc}
Let $f$ be an entire or meromorphic transcendental function.  Suppose that  $a_0, \cdots ,a_{p-1}$ is an attracting  cycle of period $p\geq 1$ whose  immediate basin of attraction $\bA$ contains exactly one asymptotic  value $v$ and no critical points.  Then every component of  $\bA$ is simply connected and $\bA$ contains no finite preimage of $v$.  If, moreover, there are no other singular values or critical points in the whole basin of attraction $\cala$, then every component of $\cala$ is simply connected.
\end{proposition}

\begin{proof}
For $k=0, \ldots, p-1$, let $A_k$ denote the component of $\bA$ which contains $a_k$ where labels are defined so that  $f(A_k)=A_{k+1\pmod p}$. Let $U_0\subset A_0$ be a topological open disk with a smooth boundary containing $a_0$ in its interior and such that $f^p$ is one to one in $U_0$ and $f^p(\overline{U_0}) \subset U_0$. This set exists by Koenig's Linearization Theorem (see e.g. \cite{milnor}). Consider the successive preimages of $U_0$ which follow the cycle in reverse order; that is for every $k\geq 1$, recursively define the set $U_k \subset A_{(p-k)\bmod p}$ as the component of $f^{-1} (U_{k-1})$ that contains the point $a_{(p-k)\bmod p}$. 
This defines a nested sequence of open sets around each of the points in the cycle. That is, for every $0\leq k \leq p-1$ we have 
\[
U_k \subset U_{k+p} \subset U_{k+2p}\cdots
\]
and the infinite union of these nested sets equals $A_k$. 
By definition, since every component $A_k$ is the infinite union of a collection of nested open sets, 
  the components of the immediate basin are simply connected if and only if $U_k$ is simply connected for every $k$. 

We will use induction to show every $U_k$ is simply connected.   For $k=0$,  this is true by construction. We assume now that $U_{k}$ is simply connected and    prove that the same is true for $U_{k+1}$.

First suppose that $v\notin U_{k}$. Then $U_{k}$ has no singular value and so $U_{k+1}$ has no critical point and thus  $f:U_{k+1} \to U_{k}$ is a  covering map. Since $U_k$ is conformally equivalent to a disk by hypothesis, it follows from Lemma~\ref{coverings}   that $U_{k+1}$ is simply connected and $f$ is conformal on this set. 
 
Now suppose that $v\in U_{k}$.  Then $U_{k+1}$ either contains an asymptotic tract of $v$ and $f|_{U_{k+1}}$ has infinite degree, or it does not. Suppose it does. By hypothesis $f:U_{k+1}\setminus \{f^{-1}(v)\} \to U_k\setminus \{v\}$ is a covering map.   Since $U_k\setminus \{v\}$ is conformally equivalent to a punctured disk,  it follows from Lemma~\ref{coverings} that $f^{-1}(v) \cap U_{k+1}$  contains at most one point. But if it contained a point, $f|_{U_{k+1}}$ would have finite degree, and that is not the case. Hence $ f^{-1}(v) \cap U_{k+1} =\emptyset $ and $U_{k+1}$ is simply connected and we are done. 

Now suppose  $U_{k+1}$ does not contain an asymptotic tract. In this case,  $f:U_{k+1}\setminus \{f^{-1}(v)\} \to U_k\setminus \{v\}$ is still a covering, but now  of finite degree, and  thus $f^{-1}(v)$ is a single point, a finite preimage $v'$ of $v$.  Note that this degree must be one because there are no critical values in the basin.   Eventually,  however, $U_{k+qp+1}$ must contain an asymptotic tract of $v$ for some $q>0$ because the immediate basin must contain a singular value  and $v$  is the only such by hypothesis.   Since the sets are nested, $U_{k+qp+1}$ would then contain not only $v'$ but also the asymptotic tract of $v$ so that  $f:U_{k+1+qp} \to U_{k+qp}$ has infinite degree and would contradict part (ii) of Lemma~\ref{coverings}.

We conclude that every component of the immediate basin is simply connected  and $v$ has no finite preimages in $\bA$. 

If furthermore $\cala$ contains no critical points, the same arguments applied to preimages of every $A_k$ for every $k$, show that at every component $\mathcal{A}$ is simply connected.  
\end{proof}

\begin{proposition}[Simple asymptotic value] \label{mapprops}
In the setup of Proposition \ref{sc}, suppose that $p\geq 2$ and let $A_0,\ldots,A_{p-1}$ be the components of the immediate basin  of attraction $\bA$ of the attracting $p-$cycle, indexed so that $v\in A_1$, $f(A_j)=A_{j+1\pmod p}$, and $a_j\in A_j$ for $0\leq j < p-1$. Let $S(f)$ be the set of singular values of $f$ and suppose $S(f)\cap \partial \bA=\emptyset$. Then,  
\begin{enumerate}[{\rm (a)}]
\item $A_0$ is unbounded and maps infinite to one onto $A_1 \setminus \{v\} $. Moreover infinity is accessible from $A_0$. 
\item  $f:A_j \to A_{j+1}$ is one to one for all $j \neq 0 \, {\rm mod} \,p$.  If  $f\in\calm_\infty$, then  $\partial A_{p-1}$ contains an accessible pole.
\item $A_0$ contains only one asymptotic tract of $v$ (i.e., $v$ has multiplicty one in $\bA$)
\end{enumerate}
\end{proposition}

\begin{proof}
Let $B\in A_1$ be a neighborhood of  $v$ with a smooth boundary. Then $f^{-1}(B)$ contains a simply connected unbounded set $T$ in $A_0$ (an asymptotic tract) whose boundary is a open curve which tends to infinity in both directions, showing that infinity is accessible from $A_1$. Moreover, $f:T\to B$ has infinite degree, hence $f:A_0\to A_1$ has infinite degree.   This shows (a).

Since $v \in A_1$ is the only singular value in the basin, $f:A_j\to A_{j+1 \pmod p}$ is a  covering map for all $j\neq 0$. Consequently, since all components of the basin are simply connected, these maps are all univalent.

  Let $\gamma:[0,1)\to A_0$ be any curve tending to infinity as $t\to 1$, and let $\sigma:[0,1) \to A_{p-1}$  be a path satisfying   $ \gamma= f \circ \sigma$. By continuity, any accumulation point of $\sigma(t)$ when $t\to 1$ must be infinity or a pole. Since the  set of poles is discrete, it follows that $\sigma$ must actually land at infinity or at a pole of $f$. 
   If  $\sigma$  were to land on infinity,   infinity would be an asymptotic value.  Therefore, if we  assume that $\infty$ is not an asymptotic value of $f$, it follows that it lands at a  pole in $\partial A_{p-1}$;  it is accessible  from $A_{p-1}$ (i.e., the limit of an arc $\sigma(t) \subset A_{p-1}$) by construction. This ends the proof of (b).

 The strategy for proving (c) relies on showing  that if $A_0$ contains two asymptotic tracts of $v$, it must also contain a critical point of $f$,  contradicting the hypotheses.   Although this is probably well known, we did not find a reference for it and therefore we include a proof. We need to set up some notation. Let  $\delta:[0,1]\to \C$ be a simple curve such that $\delta(0)=w \in \partial A_1$, $\delta(0,1] \subset A_1$ and $\delta(1)=v$.  (Recall that most boundary points are accessible \cite[Sect.~6.4]{pom}).   Since $f:A_0\to A_1\setminus \{v\}$ is a  covering map, the set of preimages of $\delta$ in $A_0$ consists of  countably many disjoint simple curves $\{\delta_i\}_{i\in\Z}$, each connecting $\infty$ to a preimage $w_i\in\partial A_0 \cap \C$ of $w$.  No $\delta_i$ can have both ends at infinity, because otherwise $w$ would be an asymptotic value in $\partial A_1$ which contradicts the hypotheses. The curves $\delta_i$ divide $A_0$ into infinitely many fundamental domains $D_i$, each of which maps conformally onto $A_1\setminus \delta$. Hence the inverse branches $\psi_i:A_1\setminus \delta \to D_i$ are well defined and univalent.

Let $B\in A_1$ be a simply connected neighborhood of $v$ such that $\delta$ crosses  $\partial B$ exactly once.  Suppose that $f^{-1}(B)\cap A_0$ has two (and for simplicity, only two) different components $T$ and $T'$; that is, there exist two asymptotic tracts of $v$ in $A_0$. Our goal is to show that this cannot happen under the assumption that there no singular values in $\partial A_1$. 

To that end, start by observing that $T$ and $T'$ are both unbounded simply connected sets whose boundary is an open curve that tends to  infinity in both directions. Together, $T$ and $T'$ form the full preimage of $B$ in $A_0$. Hence the preimages of $\delta\cap B$ are unbounded subsets of the curves $\delta_i$ which map to $B$, and  thus belong either to $T$ or to $T'$.  Now let $\{\delta^T_j\}_{j\in\Z}$ and $\{\delta^{T'}_k\}_{k\in\Z}$ denote the two biinfinite subsets of $\{\delta_i\}_{i\in\Z}$ that  tend  to infinity within $T$  or $T'$ respectively, and indexed in such a way that $\delta^T_j$ is in between $\delta^T_{j-1}$ and $\delta^T_{j+1}$ (respectively for $T'$).  Also relabel the domains $D_i$ and the points $w_i$ accordingly, so that $D_j^T$ is bounded by $\delta^T_j$ and $\delta^T_{j+1}$, and $w_j^T\in \partial A_0$ is the finite endpoint of $\delta_j^T$ (respectively for $T'$). See Figure \ref{doubletract}.

\begin{figure}[hbt!]
\captionsetup{width=0.85\textwidth}
\centering
\includegraphics[width=\textwidth]{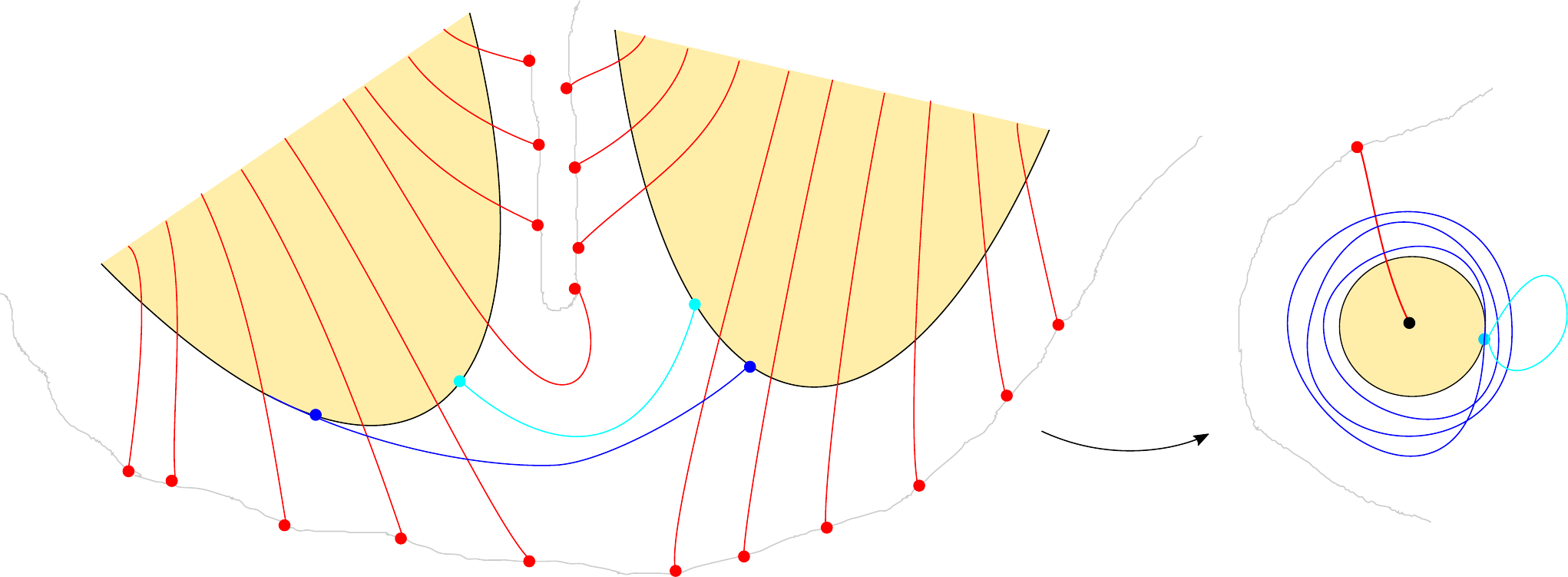}
\setlength{\unitlength}{\textwidth}
\put(-0.92,0.23){\large $T$}
\put(-0.38,0.32){\large $T'$}
\put(-0.68,-0.02){\scriptsize $w^T_J$}
\put(-0.625,0.18){\scriptsize $w^T_{J+1}$}
\put(-0.76,-0.0){\scriptsize $w^T_{J-1}$}
\put(-0.58,-0.02){\scriptsize $w^{T'}_K$}
\put(-0.54,-0.01){\scriptsize $w^{T'}_{K+1}$}
\put(-0.57,0.24){\scriptsize $D^{T'}_{K-1}$}
\put(-0.78,0.23){\scriptsize $D^T_{J}$}
\put(-0.84,0.29){\scriptsize \r $\delta^T_{J}$}
\put(-0.81,0.32){\scriptsize \r $\delta^T_{J+1}$}
\put(-0.55,0.34){\scriptsize \r $\delta^{T'}_{K-1}$}
\put(-0.5,0.33){\scriptsize \r $\delta^{T'}_{K}$}
\put(-0.1,0.12){$B$}
\put(-0.095,0.16){$v$}
\put(-0.12,0.245){\small \r $\delta$}
\put(-0.15,0.288){\small $w$}
\put(-0.97,0.13){\large $A_0$}
\put(-0.07,0.28){\large $A_1$}
\put(-0.07,0.15){\scriptsize $b$}
\put(-0.8,0.075){\scriptsize $b^T$}
\put(-0.54,0.1){\scriptsize $b^{T'}$}
\put(-0.65,0.055){\small $\gamma$}
\put(-0.63,0.1){\small $\gamma'$}
\put(-0.63,0.02){\large $C_2$}
\put(-0.67,0.36){\large $C_1$}
\put(-0.73,0.14){\scriptsize $b_J^T$}
\put(-0.56,0.19){\scriptsize $b_{K-1}^{T'}$}
\put(-0.03,0.2){\scriptsize $f(\gamma')$}
\put(-0.17,0.08){\scriptsize $f(\gamma)$}
\put(-0.29,0.06){$f$}

\caption{\small \label{doubletract} Sketch of the proof of Proposition \ref{mapprops}. In light gray we trace part of the boundaries of $A_0$ and $A_1$ although they play no role in this proof.} 
\end{figure}

Now consider a point $b\in \partial B$, $b\notin \delta$, and let $b^T$ and $b^{T'}$ be two arbitrary preimages (out of the infinitely many) in $\partial T$ and $\partial T'$ respectively. Let $\gamma$ be a simple curve joining $b^T$ and $b^{T'}$ through the open set $A_0\setminus (\overline{T\cup T'})$.  Then $\gamma$ divides this set into two unbounded components, say $C_1$ and $C_2$, whose union must contain the whole boundary of $A_0$. In particular, the points $w_i^T$ and $w_i^{T'}$, each belong either to $C_1$ or to $C_2$. 

Our first claim is that not all these points can belong to the same component $C_1$ or $C_2$. Indeed if they did, infinitely many curves $\delta_j^T$ and $\delta_j^{T'}$ would intersect $\gamma$. This would in turn imply that $f(\gamma) \in A_1\setminus B$ crosses $\delta$ infinitely many times and hence runs around $v$ infinitely many times and has infinite length. But this cannot happen because $f(\gamma)$ is a finite length closed curve with initial and endpoint $b\in\partial B$.   We conclude that $\gamma$ crosses only finitely many curves $\delta_i$ and thus there are infinitely many points $w_j^T$ and $w_k^{T'}$ in each component $C_1$ or $C_2$,  as claimed.

Choose one of the components, say $C_1$, and let $J, K\in \Z$ be the ``last" indices for which $w_J^T$ and $w_K^{T'}$ belong to $C_2$ or, in other words, $w_J^T, w_K^{T'} \in C_2$ but $w_j^T, w_k^{T'} \in C_1$ for all $j>J$ and $k< K$. This in turn implies that the curves $\delta_j^T$ and $\delta_k^{T'}$ do not intersect $\gamma$ for any $j>J$ or $k< K$; note that if they do, because both their endpoints lie on $C_1$,  $\gamma$ can be modified slightly to avoid such  intersections.

Finally observe that the domains $D_J^T$  and $D_{K-1}^T$ must then be part of the same fundamental domain, which is ``double" in some sense. Indeed, one could join the preimage $b_J^T$ of $b$ in $\partial T\cap D_J^T$ to the preimage $b_{K-1}^{T'}$ of $b$ in $\partial T'\cap D_{K-1}^{T'}$ by a curve $\gamma'$ not crossing any curve $\delta_j^T$ or $\delta_k^{T'}$. This would imply that $f(\gamma') \subset A_1 \setminus \delta$ and therefore $\gamma' = \psi_i(f(\gamma'))$ for some fixed $i\in\Z$. But $\gamma'$ contains at least two preimages of the point $b$ which contradicts the fact that  the inverse branch $\psi$ is univalent, and ends the proof of  part (c). 
\end{proof}

This concludes the proof of Theorem A.

\subsection{Boundaries of  basins of attraction}

In the next proposition,  we make the additional restrictions that    $\partial \bA$ is locally connected and contains no singular values to obtain properties of the component of the basin that maps to a neighborhood of the asymptotic value.   Under the additional assumption that $f \in \pmf$, we get a complete understanding of the topology and dynamics of the whole basin for these maps. 

\begin{proposition}[Bounded components] \label{lc}
Let $f$ be an entire or  meromorphic function  with an attracting cycle of period $p\geq 1$. Let $\bA$ be its immediate attracting basin
and suppose it contains a unique asymptotic value $v$  and no critical points. Assume $\partial \bA$ is locally connected and $S(f)\cap \partial \bA=\emptyset$. 
Let $A_0, A_1, \ldots, A_{p-1}$ be the components of $\bA$, named so that  $f(A_i)=A_{i+1 \pmod p}$ and $v\in A_1$. Then 
$\partial A_0$ is connected and unbounded. 

If, moreover, $ f \in \pmf$,   there is a unique pole in $\partial \bA$ which is in $\partial A_{p-1}$, and  for all $j=1, \ldots, p-1$, 
 $A_j$ is bounded.
 \end{proposition}

\begin{proof}
Recall from the propositions above that all components of $\bA$ are simply connected, $A_0$ is unbounded and maps infinite  to one to $A_1$, and $f: A_i \to A_{i+1\pmod p}$ is one-to-one for $i\neq 0$.  
The assumption of local connectivity of $\bA$, implies that $A_i$ is bounded if and only if  $\partial A_i$ is a single closed curve. Otherwise  $\partial A_i$ is a countable union of continuous unbounded curves. 

We first show that $\partial A_0$ is a single continuous curve with both ends at infinity. Indeed,  if $\partial A_0$ had at least two components, an argument analogous to the proof of Proposition \ref{mapprops} would show that at least one of the fundamental domains $D_j$ would be bi-infinite and thus map $2$ to $1$ onto $A_1$, contradicting the injectivity of $f$ on $D_j$. 

Thus $\partial A_0$ is connected and since $A_0$ is simply connected,  $A_0$ has only one access to infinity; that is, if $B(0,R)$ is a disk of radius $R$, then for all large $R$, $A_0 \setminus B(0,R)$ consists of a single unbounded component $C$.

Now assume $f \in \pmf$ and let $\gamma(t)\to\infty$ in $A_0$. Then $f^{-1}(\gamma(t))$ must converge to a pole, say $P\in\partial A_{p-1}$ or otherwise $\infty$ would be an asymptotic value. Let $N$ be the neighborhood of $P$ which maps onto $\C\setminus B(0,R)$ for some large $R$. Then $N\cap A_{p-1}$ contains the full preimage of  $C$. Now suppose there is a second pole $P'\in\partial A_{p-1}$ and choose a curve $\sigma(t)\to P'$ in $A_{p-1}$.  Then $f(\sigma(t))$ must tend to infinity in $C$ since there is no other access to infinity.  Let $N'$ be a preimage of $\C\setminus B(0,R)$ containing $P'$.   Choosing $R$ larger if necessary, $N$ and $N'$ are disjoint.  But  $N'\cap A_{p-1}$ must contain preimages of all points in $C$, which implies $f:A_{p-1} \rightarrow A_0$ is not one to one. Thus there is no second pole on $\partial A_{p-1}$.

Now suppose that $A_{p-1}$ is unbounded and let $\gamma(t)\to\infty$ within  $A_{p-1}$. Since we can choose $N$ so that $\gamma \notin N$, $f(\gamma)$ cannot enter $C$ and so must converge to a finite boundary point which is therefore an asymptotic value, contradicting the hypothesis that $f \in \pmf$.  This proves that $A_{p-1}$ is bounded. 
By the same argument, since $\partial \bA$ contains no asymptotic values, it follows that each component $A_i$, $i=1,\ldots,p-2$ is bounded by a continuous closed curve without poles.  
\end{proof}

\begin{remark}
In this proposition we   assumed  local connectivity of the boundary of the immediate basin  and concluded that all its components  except  $A_0$ are bounded. It is plausible that a partial converse statement is also true.  If we assume that $A_1$ is bounded, and add some hyperbolicity condition,  it is very possible that $\partial A_1$ locally connected. Then the same would be  true for  the remaining components.
\end{remark}

\section{Parameter space: Dynamically natural slices} \label{slices}

Families of functions defined by explicit formulas such as rational functions,  tangent or exponential functions clearly have ``natural'' embeddings into $\C^n$ for an appropriate $n$ in terms of their coefficients or  singular  values.    Up to affine conjugation, the images of these embeddings can be thought of as parameter spaces and we can study how the dynamics depends on these parameters.  We do not necessarily have closed forms for the families we are discussing here.   Nevertheless, there is a sense in which  every transcendental function $f$ with $N<\infty$ singular values belongs to an $N$ dimensional complex analytic manifold \cite{gold-keen2,EL92,bkl4}.

To make this precise we use the  theory of holomorphic motions for holomorphic families of entire and meromorphic maps.  For rational maps, the theory is explained in \cite{mcmbook} and it is adapted to meromophic functions in \cite{KK}.  We state the results from the latter reference that we use here.

\begin{definition}[Holomorphic family]
A {\em holomorphic family} of entire or meromorphic maps over a complex manifold $X$  is a  map
$\calf:X \times \C \rightarrow \hat\C$, such that $\calf(x,z)=:f_x(z)$ is meromorphic for all $x\in X$ and $x \mapsto f_x(z)$ is holomorphic for all $z\in \C$.   If $X$ has dimension $n$,  we say the family has dimension $n$ and $X$ is the parameter space for $\calf$. 
\end{definition}

\begin{definition}[Holomorphic motion]
A {\em holomorphic motion} of a set $V \subset \hat\C$ over a connected complex manifold with basepoint $(X,x_0) $ is a map  $\phi: X  \times V \rightarrow \hat\C$ given by 
 $(x,v) \mapsto \phi_x(v) $ such that
 \begin{enumerate}
\item  for each $v \in V$ , $\phi_x(v)$ is holomorphic in $x$, 
 \item
  for each  $x \in X$,  $\phi_x(v) $ is an injective function of  $v \in V$, and,
  \item  at $x_0$, $\phi_ {x_0} \equiv {\rm Id}$.
  \end{enumerate}

A holomorphic motion  of a set $V$ {\em respects the dynamics} of the holomorphic  family $\calf$ if $\phi_x(f_{x_0} (v)) = f_x(\phi_x(v))$  whenever both $v$ and $f_{x_0}(v)$ belong to $V$.
\end{definition}

Because maps in $\calf$ that are conjugate by an affine map have the same dynamics, we want to restrict ourselves to one representative for each conjugacy class.  We  do this by choosing an appropriate normalization for the functions and restricting $\calf$ to those normalized functions.  Below, we always assume the family is normalized in some way;  the normalization   depends on the family.   

The following equivalencies are proved for rational maps in \cite{mcmbook} and extended to the transcendental setting in \cite{KK}.

\begin{theorem} \label{jstable}
Let $\calf$ be a  holomorphic family of normalized entire or meromorphic maps with finitely many singular values, over a complex manifold $X$, with base point $x_0$. Then the following are equivalent.
\begin{enumerate}[{\rm (a)}]
\item The number of attracting cycles of $f_x$ is locally constant in a neighborhood of  $x_0$.
\item There is a holomorphic motion of the Julia set  of $f_{x_0}$ over a neighborhood of $x_0$ which respects the dynamics of $\calf$.
\item If in addition, for $i=1, \ldots,N$, $s_i(x)$  are holomorphic maps parameterizing the singular values of $f_x$, then the  functions $x \mapsto f_x^n(s_i(x))$ form a normal family on a neighborhood of $x_0$. 
\end{enumerate}
\end{theorem}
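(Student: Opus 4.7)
The plan is to prove the cycle of implications (c) $\Rightarrow$ (b) $\Rightarrow$ (a) $\Rightarrow$ (c), adapting the classical Ma\~n\'e--Sad--Sullivan argument from the rational setting to meromorphic maps in class $\cals$, where additional care is needed to handle poles and the essential singularity at infinity.

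For the step (c) $\Rightarrow$ (b), I would first use the implicit function theorem to continue every repelling periodic point of $f_{x_0}$ holomorphically in $x$ on a neighborhood of $x_0$. The assumed normality of the singular-orbit families then rules out any collision between two such continuations, because a coalescence would force a critical point into the orbit and produce a non-normal subfamily among the $\{x \mapsto f_x^n(s_i(x))\}$; the same input prevents a continued cycle from meeting a prepole. Having the motion defined on the dense subset of repelling cycles, I would invoke the $\lambda$-lemma on $\chat$ to extend it to a motion of $J(f_{x_0})$, which equals the closure of repelling cycles because $f_{x_0}\in\cals$; the dynamics are respected automatically by continuity.

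The direction (b) $\Rightarrow$ (a) is the easiest: the motion $\phi_x$ carries Fatou components to Fatou components, so every attracting cycle of $f_{x_0}$ persists with a multiplier depending holomorphically on $x$. This multiplier cannot cross $\partial\D$ without producing a parabolic bifurcation, which would be incompatible with the continuity of $x\mapsto J(f_x)$ furnished by the motion; and no new attracting cycle can appear, since its basin would have to intersect $J(f_{x_0})$ after an arbitrarily small perturbation. Hence the count is locally constant.

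I expect (a) $\Rightarrow$ (c) to be the main obstacle, and I would argue by contraposition: suppose some family $\{x\mapsto f_x^n(s_i(x))\}$ fails to be normal near $x_0$. By Montel its values fill all but at most two points locally, and a standard Zalcman-type rescaling then produces parameters arbitrarily close to $x_0$ at which the orbit of $s_i$ lands on a repelling periodic point. A further small perturbation converts this Misiurewicz-type coincidence into the birth of a new attracting cycle that captures $s_i$, contradicting (a). The genuinely meromorphic difficulty is that the rescaling could in principle degenerate as the orbit approaches the pole set; I would circumvent this by using Lemma \ref{bounded disks} to work in a disk that omits a bounded neighborhood of infinity, and then perturb the parameter slightly so as to avoid the discrete, holomorphically moving prepole set while retaining non-normality.
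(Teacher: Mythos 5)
The paper does not prove this result: it cites it, with the sentence preceding it reading ``The following equivalencies are proved for rational maps in \cite{mcmbook} and extended to the transcendental setting in \cite{KK}.'' So there is no internal proof here to compare against, and a self-contained argument would essentially reprove the Ma\~n\'e--Sad--Sullivan/McMullen stability theory for class $\cals$, which is what \cite{KK} supplies. Your outline follows the right skeleton, but two of the justifications are incorrect as stated.

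In the step $(c)\Rightarrow(b)$, you claim that a coalescence of two holomorphically continued repelling periodic points ``would force a critical point into the orbit.'' That is not what happens: a collision of two periodic-point continuations means $f_x^n(z)-z$ acquires a multiple root, i.e.\ the cycle has multiplier $1$ (parabolic). No critical point enters the orbit. The correct (and more delicate) reason that normality of the singular-orbit families rules this out is that a non-persistent parabolic cycle forces a singular orbit nearby to behave non-normally, via the instability of the Leau--Fatou flower. Similarly, you need a separate argument for why the continued cycle cannot limit onto a prepole, and why it stays repelling; none of these follow from the coalescence remark you give.

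In the step $(a)\Rightarrow(c)$, the assertion that ``a further small perturbation converts this Misiurewicz-type coincidence into the birth of a new attracting cycle that captures $s_i$'' is not a valid move. Perturbing a Misiurewicz parameter does not make the repelling cycle that $s_i$ lands on become attracting. The classical rational-map version of this step applies Montel a second time to find a parameter at which the \emph{critical orbit closes up on the critical point}, producing a superattracting cycle and changing the attracting-cycle count. That mechanism does not carry over verbatim when $s_i$ is an \emph{asymptotic} value, since closing up the orbit on an asymptotic value yields a periodic asymptotic value but not a multiplier-zero cycle; this is precisely one of the points where the transcendental case needs a genuinely different argument. Finally, you invoke Lemma~\ref{bounded disks} to control the orbit near the poles, but that lemma is stated only for $\pmf$ (infinity not an asymptotic value), whereas Theorem~\ref{jstable} is asserted for all finite-type meromorphic families, including those with an infinite asymptotic value; so your control near $\infty$ does not cover the full generality of the statement.
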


\begin{definition}[$J-$stability]
A parameter $x_0\in X$ is a $J$-stable parameter for the normalized family $\calf$ if it satisfies any of the above conditions.
We denote by $X^{stab}$, the set  of $J$-stable parameters for the family $\calf$.  Its complement 
\[
\calb_X:=X\setminus X^{stab}
\]
is known as  the {\em bifurcation locus} of the family $\calf$, and the elements in $\calb_X$ are the  {\em bifurcation parameters}.
\end{definition}

In families of maps with more than one singular value,  it makes sense to consider subsets of the bifurcation locus where only some of the singular values are bifurcating, in the sense that the  families  $\{g_n^i(x):= f_x^n(s_i(x))\}_n$ are normal in a neighborhood of $x_0$ for some values of $i$, but not for all.  We define
\[
\calb_X^i:= \calb_X(s_i)= \{x_0\in X \mid  \{g_n^i(x)\} \text{\ is not normal in any neighborhood of $x_0$ in $X$}.\}
\]
This is also known as the {\em activity locus (in $X$)} of the singular value $s_i$ (see \cite{dujfav,gaut}). 

In this paper we investigate one dimensional slices of holomorphic families in which the activity loci of the different singular values are disjoint. This means that at any given parameter on our slice only one of the singular values is allowed to bifurcate at a given parameter value.  More precisely, if the maps in our slice  have $N$ distinct singular values, we require the existence of $N-1$ persistent attracting or parabolic cycles of fixed multiplier throughout the slice. Singular values can take turns in ``serving'' these basins but only one of them at a time is allowed to be free.  These and other conditions are collected in the definition below of a {\em dynamically natural slice}.

\begin{definition}[\bf Dynamically natural slice]\label{def:dns}
 Let $\calf$ be a  holomorphic family of normalized entire or meromorphic   maps  over $X$.  Assume $S(f_x)$, the set of finite singular values of $f_x$  has cardinality $N<\infty$ for all $x\in X$.
 A one dimensional subset $\Lambda \subset X $ is a {\em dynamically natural slice} with respect to $\calf$ if the following conditions are satisfied. 
\begin{enumerate}[{\rm(a)}]
\item 
 $\Lambda$ is embedded or conformally equivalent to $\C$  minus a discrete set.  The removed points are called {\em parameter singularities}.  (By abuse of notation we denote its image in $\C$ by $\Lambda$ again,  and denote the variable in $\C$ by  $\lambda$, and the function $f_x$ by $f_{\la}$.

\item The singular values are given by distinct holomorphic functions $s_i(\lambda)$, $i=1,\ldots, N-1$, and an asymptotic value $v_\la:=v(\la) \not\equiv s_i(\la)$ for any $1\leq i \leq N-1$, that  is an affine function of $\la$. We further assume that no preimage of $v_\la$ is a critical point for all $\lambda$. We call $v_\la$ the {\em free asymptotic value}, and  we require that $\calb_\Lambda(v_\la) \neq \emptyset$ and $\calb_\Lambda(v_\la) \neq  \Lambda$. 
\item The poles (if any) are given by distinct holomorphic functions  $\{P_i(\lambda)\}_{0\leq i \leq M\leq \infty}$, $\lambda \in \Lambda$.
\item  
 There are $N-1$ distinct attracting  or parabolic cycles whose period and multiplier are constant for all $\la\in \Lambda$. 

\item  \label{propconj} Suppose there exists a parameter $\la_0$ such that  $v_{\la_0}$ is the only singular value in $\cala_{\la_0}$,  the basin of attraction of an attracting  cycle whose multiplier is not a constant function of $\lambda$. Then 
the slice $\Lambda$ contains, up to affine conjugacy,  all meromorphic maps $g:\C\to \chat$ that are quasiconformally conjugate to  $f_\lo$ in $\C$ and conformally conjugate to $f_\lo$ on  $\C \setminus \cala_{\la_0}$. 
\item $\Lambda$ is maximal in the sense that if $\Lambda'= \Lambda \cup \{\la_0\}$ where $\la_0$ is a parameter singularity, then $\Lambda'$ does not satisfy at least one of the conditions above. 
\end{enumerate}
\end{definition}

From now on, as long as it is understood from the context,  we will drop the index $\Lambda$. In other words $\calb :=\calb_\Lambda$.

\begin{definition}[\bf Dynamically natural family]\label{def:dnf}
We define  the subfamily of the holomorphic family $\calf$ of normalized entire or meromorphic functions,   $\calf_{\Lambda} = \{f_{\la} \}_{ \la \in \Lambda}$, as those  $f_{\la} \in \calf$ for which $\la$ lies in the dynamically natural slice $\Lambda$.  We call  $\calf_\Lambda$ (or $\{f_\la\}_{\la\in\Lambda}$)   a {\em dynamically natural family} for the slice $\Lambda$.   When referring to the parameters we use $\Lambda$ and when referring to the functions we use $\calf_{\Lambda}$.  
\end{definition}

Let us make some remarks on the conditions above. 

\begin{remark} \label{relax}

\begin{enumerate} [(i)] 
\item A given family $\calf$ may contain both entire and meromorphic functions. Condition (c) implies that in a dynamically natural slice the functions are either all entire or all meromorphic.  
\item \label{provando}
Condition (d) could actually be weakened for our purposes. As an example, we could require only $N-2$ attracting or parabolic cycles of constant multiplier plus an orbit relation between two distinct singular values (e.g. symmetry, or the two orbits coinciding after some iterates.) The tangent family is an example. Although most of our results hold in these more general situations, for simplicity of exposition we choose to require condition (d) as stated.  We will, however, also consider more general dynamically natural slices in the examples in Section \ref{sec:examples2}. 
\item  Condition (e) is generally easy to verify  in concrete families. 
\item 
Condition (f) is imposed to avoid artificial parameter singularities. In general, these singularities occur because the functions become constant or  some of the singular values coalesce and/or some poles  coalesce and become critical points; either of these events will  create  parameter singularities. 
\item 
Suppose $\Lambda$ is a dynamically natural slice. For a given parameter value $\la_0\in \Lambda$,  if the free asymptotic value $v_{\la_0}$ is attracted to an attracting cycle, it follows that  $\la_0\in \Lambda^{stab}$ (indeed, if any of the cycles is parabolic, it must be persistent and therefore the map is $J-$stable, although not necessarily hyperbolic). Because there are $N-1$ attracting or parabolic basins which need to attract $N-1$ different singular values,  only one of the singular values can be active at any given parameter.  That is,  if we set $\calb^0:=\calb(v_\la)$, then for $i,j=0,1,\ldots,N-1$, 
\[
\calb^i\cap \calb^j =\emptyset 
\]
whenever $i\neq j$. If we relax the definition of a dynamically natural slice as indicated in  Remark~\ref{provando} above, then we must also allow the possibility that $\calb^i =\calb^j$ for some $i\neq j$. This is the case, for example, in the tangent family.

\end{enumerate}
\end{remark}
   
 Most of the  slices which have been systematically studied in the literature, like the exponential family $e^z + \lambda$ or  the tangent family $\la \tan z$,  are dynamically natural slices of the larger family of functions with two asymptotic values and no critical values.   In the Appendix we show that many other slices can be constructed by pre- or post- composing functions  with finitely many asymptotic values with rational maps,  and some examples are shown in Section \ref{sec:examples2}.

\section{Distinguished parameter values: Proof of Proposition B.}\label{Dist values}

Let  $\Lambda$ be a dynamically natural slice for a family of entire or meromorphic functions and let  $ \calf_\Lambda=\{f_\lambda \}_{\la \in \Lambda}$ be the corresponding dynamically natural family.   Recall that the definition implies that $\#{S(f_\la)}$ is constant and finite for all $\la\in\Lambda$.  Let $v_\la$ denote the free asymptotic value of $f_\la$.  Our goal is to discuss special parameter values for which the forward orbit of the free singular value  is finite.   We  restrict our discussion to parameters varying in  $\Lambda$ and to the bifurcation locus $\calb(v_{\la})$, although the definitions could be adapted for the full holomorphic family defined over $X$.

\begin{definition}[Misiurewicz parameters]
A parameter $\la\in\Lambda$ (or the map $f_\la$) is called a {\em $\Lambda$ slice Misiurewicz parameter} if   the free singular value $v_{\la}$   has the property that $f_\la^n(v_{\la})$ is a repelling or parabolic periodic point for some $n\geq 0$. 
\end{definition}

\begin{remark*}
In the definition above we assumed the periodic point $f_\la^n(v_{\la})$ to be  repelling or parabolic, because we want our Misiurewicz parameters belong to $\mathcal{B}(v_\la)$.  Indeed, otherwise $f_\la^n(v_{\la})$ would be attracting, Siegel or Cremer. In the first case the parameter $\la$ would be $J-$stable while the last two cases cannot occur since by definition of $\Lambda$, they would imply the presence of $N$ non-repelling cycles, and therefore $N$ singular values apart from $v_\la$. 
 \end{remark*}

For simplicity of exposition, throughout this paper we will refer to $\Lambda$  slice Misiurewicz parameters  as Misiurewicz parameters.

By definition, Misiurewicz parameters are solutions of 
 \begin{equation}\label{eq:mis}
f_\la^m(v_{\la})=f_\la^n(v_{\la})
\end{equation}
for some $m>n\in \N$.  If $f_\la$ is conjugate to a Misiurewicz map $f_\lap$ which satisfies Equation (\ref{eq:mis}) for certain values of $m,n\in \N$, then $f_\la$ satisfies the same equation for the same $m,n$.

The second kind of distinguished parameter value that we discuss is specific to meromorphic maps. A meromorphic function has  at least one pole that is not an omitted value.  The pre-images of the poles have a finite forward orbit and play an important role in the dynamics.

\begin{definition}[Order of a prepole]
A point $p\in\C$ is a {\em prepole of order $n > 0$} for $f_\la$ if $f_\la^k(p)$ is defined for $k<n$ and $f_\la^n(p)=\infty$. 
\end{definition}
\noindent With this definition a pole is a prepole of order 1.

For every $\la\in\Lambda$, the  poles form a discrete set so  they can only accumulate at infinity. Unless there are at most two poles and both are omitted values, which  is never the case for functions of finite type, Picard's theorem implies that  the prepoles of order 2 form an infinite set.  Their accumulation set is the set of poles and the point at infinity. Prepoles of order $n\geq 3$  
 accumulate both at prepoles of order $n-1$ and at infinity. It is well known that prepoles are dense in the Julia set of $f_\la$  \cite{bergweiler,bkl1}. 

We now look at the parameter plane and  consider parameters for which some iterate of the free asymptotic value is a pole. 

\begin{definition}[Virtual cycle] 
Let $f$  be a meromorphic map. A {\em virtual cycle} for $f$ of period $p\geq 2$ is a set of points $\{ a_1, a_2, \ldots, a_{p-1},\infty\}$, $a_i\in\C$ for $i=1,\ldots,p-1$,  where $a_1$ is an asymptotic value, $a_{p-1}$ is a pole and $f(a_{i})=a_{i+1}$ for $1\leq i \leq p-2$. In other words a virtual cycle is the forward orbit of an asymptotic value which is a prepole.  
\end{definition}

\begin{definition}[Virtual cycle parameter]
A parameter $\lambda\in \Lambda$ is called a {\em virtual cycle parameter of order $p\geq 2$}  if $f_\lambda$ has a virtual cycle of period $p$. 
\end{definition}

For $\la\in\ps$, let $\{P_i(\la)\}_{0\leq i \leq M\leq \infty}$ denote the set of poles of $f_\la$.    Recall that since we are working in a dynamically natural slice,  each $P_i(\la)$ is a distinct  holomorphic function of $\la$ throughout $\ps$. By definition, virtual cycle parameters of order $p\geq 2$ in $\Lambda$ are  $\la-$values that  are solutions of
\begin{equation} \label{eq:vc}
f_\la^{p-2}(v(\la)) = P_i(\la)
\end{equation}
for some $0\leq  i \leq M$,  where as usual $v(\lambda)$ denotes the  free asymptotic value of $f_\la$.

\begin{remark*}
Virtual cycle parameters are parameter values for which an orbit relation exists  in the sense of (\ref{eq:vc}). Hence if $\lambda$ is a virtual cycle parameter and $f_\la$ and $f_\lap$ are topologically conjugate, then $\la'$  must be  virtual cycle parameters of the same order. 
\end{remark*}

\begin{proposition}[Special parameters are unstable]  \label{biflocus}
Misiurewicz parameters  and  virtual cycle parameters  belong to the bifurcation locus $\calb(v_\la) \subset \calb_\Lambda$. 
\end{proposition}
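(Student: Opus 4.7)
The plan is to contradict criterion (c) of Theorem~\ref{jstable}: at each distinguished parameter $\la_0$ I will show that the family $\{g_m(\la):=f_\la^m(v_\la)\}_m$ is not a normal family of meromorphic functions on any neighborhood of $\la_0$. Since, by Definition~\ref{def:dns}(d), the non-free singular values are attracted to persistent attracting or parabolic cycles and therefore cannot land on poles or on repelling cycles, the singular value involved in the definition of Misiurewicz or virtual cycle must necessarily be the free asymptotic value $v_\la$.

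\emph{Virtual cycle parameters.} If $\la_0$ is a virtual cycle parameter of order $p$, then $f_{\la_0}^{p-2}(v_{\la_0})=q(\la_0)$ for some pole $q(\la)$ of $f_\la$, so $f_{\la_0}^{p-1}(v_{\la_0})=\infty$. If the relation $f_\la^{p-2}(v_\la)\equiv q(\la)$ is persistent near $\la_0$, then $g_{p-1}\equiv\infty$ and the next iterate $g_p$, which would require evaluating $f_\la$ at $\infty$, is undefined, so $\{g_m\}$ is automatically not normal. Otherwise $\Phi(\la):=f_\la^{p-1}(v_\la)$ is meromorphic on a neighborhood $U$ of $\la_0$ with an isolated pole there, so $\Phi$ maps every punctured disk around $\la_0$ onto a punctured neighborhood of $\infty$. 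By Lemma~\ref{bounded disks} the poles of $f_{\la_0}$ accumulate at $\infty$, so $\infty$ is an essential singularity of $f_{\la_0}$; Picard's great theorem then forces $g_p=f_\la\circ\Phi$ to take all but at most finitely many values of $\hat\C$ in every neighborhood of $\la_0$. This rules out a meromorphic extension of $g_p$ to $\la_0$ and hence normality of $\{g_m\}$, so $\la_0\in\calb$.

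\emph{Misiurewicz parameters.} Assume $f_{\la_0}^n(v_{\la_0})=p_0$ with $p_0$ on a repelling $k$-cycle of multiplier $\mu$, $|\mu|>1$, and suppose for contradiction that $\{g_m\}$ is normal on a disk $U$ around $\la_0$. The implicit function theorem produces a holomorphic continuation $p(\la)$ of the cycle on $U$ with multiplier $\mu(\la)$, $|\mu(\la)|>1$. Set $\phi(\la):=g_n(\la)-p(\la)$. If $\phi\equiv 0$ on $U$, then by analytic continuation $g_n\equiv p$ on all of $\Lambda$, and $\{g_m\}_m$ reduces to finitely many meromorphic functions (an initial segment followed by the $k$ points of the cycle), making $\{g_m\}$ normal on all of $\Lambda$; this forces $\calb(v_\la)=\emptyset$, contrary to Definition~\ref{def:dns}(b). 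Otherwise $\phi$ has an isolated zero of some order $m\ge 1$ at $\la_0$. Using Koenigs linearization with holomorphic parameter dependence, I will pick a neighborhood $W$ of $p(\la)$ and biholomorphisms $\psi_\la:W\to D(0,R)$ satisfying $\psi_\la(p(\la))=0$ and $\psi_\la\circ f_\la^k=\mu(\la)\psi_\la$. Setting $\xi(\la):=\psi_\la(g_n(\la))$ gives a function with an isolated zero of order $m$ at $\la_0$, and while the orbit remains in $W$ one has $\psi_\la(g_{n+jk}(\la))=\mu(\la)^j\xi(\la)$. For $\delta=|\la-\la_0|$ small, $|\xi(\la)|\asymp\delta^m$; choosing $j=j(\delta)$ so that $|\mu|^j|\xi(\la)|\in[R/2,R]$ pins $g_{n+jk}(\la)$ into a fixed annulus around $p(\la)$ at definite distance $\rho>0$ from $p(\la)$. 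Since $g_{n+jk}(\la_0)=p_0$ and $p(\la)\to p_0$ as $\la\to\la_0$, this produces arbitrarily small $\delta$ with $|g_{n+jk}(\la)-g_{n+jk}(\la_0)|\ge\rho/2$, violating equicontinuity of $\{g_m\}$ at $\la_0$ and hence normality.

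The main obstacle is the rescaling argument in the Misiurewicz case: one has to verify that Koenigs linearization can be chosen holomorphically in $\la$ over a common neighborhood $W$ of $p(\la)$ (standard but needs care), and that the choice $j=j(\delta)$ keeps the orbit inside $W$ up to iteration $j$, so that the explicit formula $\mu(\la)^j\xi(\la)$ for the linearized coordinate is valid and places $g_{n+jk}(\la)$ where one wants it. Both arguments share a single underlying principle: an essential singularity at infinity, or an expanding multiplier at a repelling cycle, propagates arbitrarily small perturbations of $\la$ into macroscopic displacements of the orbit, which is incompatible with normality.
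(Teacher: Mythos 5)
Your proof is a genuinely different, more hands-on route than the paper's. The paper's proof is a single short paragraph: equations (\ref{eq:mis}) and (\ref{eq:vc}), for fixed $m,n,p$, have discrete solution sets; both conditions are preserved under topological conjugacy; hence a $J$-stable Misiurewicz or virtual-cycle parameter would have a whole neighborhood satisfying the same equation, contradicting discreteness. You instead test criterion (c) of Theorem~\ref{jstable} directly, producing explicit witnesses to non-normality of $\{g_m\}$ via Picard's great theorem (virtual-cycle case) and a Koenigs rescaling at the repelling cycle (Misiurewicz case). What your approach buys is an explicit description of the mechanism of instability; what the paper's buys is brevity, by packaging the expansion argument inside the conjugacy-invariance claim.

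A few slips in your write-up should be repaired. In the persistent virtual-cycle case you assert that ``$\{g_m\}$ is automatically not normal,'' but the family then terminates at $g_{p-1}\equiv\infty$, consists of finitely many meromorphic functions, and is therefore \emph{normal}; the correct contradiction is with $\calb(v_\la)\neq\emptyset$ from Definition~\ref{def:dns}(b), not with normality. The same repair is needed in the persistent Misiurewicz case, where in addition the claimed analytic continuation of $g_n\equiv p$ to all of $\Lambda$ requires justification, since the repelling point $p(\la)$ need not continue globally. Finally, in the Picard step, $g_p(\la)=f_\la(\Phi(\la))$ involves the \emph{varying} map $f_\la$, not the single map $f_{\la_0}$, so applying the great Picard theorem to $f_{\la_0}$ at $\infty$ does not immediately give the value-omission statement for $g_p$; some uniform comparison of $f_\la$ with $f_{\la_0}$ along the sequence $\Phi(\la)\to\infty$ is needed to conclude that $g_p$ fails to extend across $\la_0$. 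The paper's discreteness-plus-invariance argument sidesteps all of these delicate points, which is presumably why it is phrased that way.
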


\begin{proof}
Virtual cycle parameters are solutions of (\ref{eq:vc}) while Misiurewicz parameters are solutions of (\ref{eq:mis}).  Since the zeroes of holomorphic functions are discrete, for fixed $m,n$ and $p$ both equations have a discrete set of solutions. Hence, since both conditions are invariant under topological conjugacy, the maps must be structurally unstable.   In  addition,  since repelling and parabolic periodic points as well as poles  are in the Julia set,   the maps are not  $J-$stable. 
\end{proof}

\begin{proposition}[Density of special parameters] \label{dense}
Let  $\Lambda$ be a dynamically natural slice. Both the virtual cycle parameters   and the  Misiurewicz parameters   are dense in  $\calb(v_\la)$.  Moreover  $\calb(v_\la)$ has no isolated points in $\ps$.
\end{proposition}
\begin{proof}
 The proof is a standard application of Montel's Theorem   (See  e.g.  \cite[Thm 3.36, p.57]{beardon} and \cite[Thm.1.5, p.129]{cargam}.)  We begin with the virtual cycle parameters in slices of meromorphic maps.  Let $\lo\in  \calb(v_\la)$  and let $U$ be a neighborhood of $\lo$ in $\ps$. Let $P_i(\la)$, $i=1,2,3$ be three prepoles of order 3 which are distinct for every $\la \in U$.  These exist since the set of prepoles of order $3$ or larger   is infinite. Now observe that  because of the  assumption that $\lo\in\calb(v_\lambda)$,  $\{g_n\}$ is not  normal in $U$, and so there must be  some $\lambda \in U$ such that $g_n(\la)=f_\la^n(v_{\la})$  takes one of the values  $P_i(\la)$, for some $i=1,2$ or $3$ and some $n$.    
 
To show that Misiurewicz parameters are dense we use a repelling periodic point $a(\la)$  of period  3 (for example) for all $\la\in U$, and apply the above arguments to  the functions $\{a(\la), f_\la(a(\la)), f_\la^2(a(\la))\}$, and to  $g_n(\la):=f_\la^n(v_{\la})$. 

Note that these proofs actually show that the sets of virtual cycle parameters of order 4 or Misiurewicz parameters of preperiod 1 and  period 3 are dense in $\calb(v_\la)$. By choosing prepoles of a different order or periodic points of a different period, the same proof would show that fixing these numbers arbitrarily, the corresponding subsets are also dense in $\calb(v_\la)$. Since these subsets are disjoint, it follows that they must accumulate on each other and therefore $\calb(v_\la)$ has no isolated points.
\end{proof}

\section{Shell components. Proof of Theorems C and  D. }\label{Shell}

Let   $ \Lambda$ be a dynamically natural slice of a normalized family of entire or meromorphic maps and let $\calf_{\Lambda}=\{f_\lambda\}_{\la\in\Lambda}$ be the associated dynamically natural family. Let $v_\la$ denote the free asymptotic value of $f_\la$ which, we recall, depends affinely on $\la$.  

 We are interested in studying those stable components in the slice $\Lambda$ that reflect the behavior of  the free asymptotic value. 
\begin{definition}[Shell Component] \label{def:shell}
A set $\Omega \subset \Lambda $ is a {\em shell component}   if it is a connected component of the set
 \[
\calh:=\{ \la\in\ps \mid v_\la \text{\ is attracted to an attracting cycle with nonconstant multiplier} \}.
\]

\end{definition}

Observe that if $\la \in \calh$, then  all remaining singular values must belong to the $N-1$ basins of parabolic  or attracting  cycles with constant multiplier, so that $v_\la$ is the only singular value in the basin of attraction of the cycle with nonconstant multiplier.  As a consequence, parameters in $\calh$ are $J-$stable in $\Lambda$; that is 
\[
\calh \subset \ps\setminus \calb =  \Lambda^{stab},
\]
where as above,  $\calb=\calb_\Lambda$ is the bifurcation locus of the slice $\Lambda$. 

Since $\partial \Omega$ is in $\calb$, it follows that $\Omega$ is also a connected component of $\ps\setminus \calb$. 

 Shell components may be hyperbolic components in the usual sense --- or they may not be; for example, if some of the cycles with constant multiplier are parabolic then no map in $\Lambda$ is hyperbolic. Conversely, capture components, where one of the persistent cycles attracts $v_\la$ and another singular value, are hyperbolic components but are not shell components.

The following property follows directly  from the definition.

\begin{lemma}[The period is constant]
Let  $\calf_{\Lambda}$ be a dynamically natural family and let $\Omega \subset \Lambda$ be a shell component. Then the period of the attracting cycle to which $v_\la$ is attracted  is constant throughout $\Omega$. It is called the {\em period of $\Omega$.}
\end{lemma}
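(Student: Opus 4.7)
The plan is to show that the period function $p : \Omega \to \N$, where $p(\la)$ is the period of the attracting cycle to which $v_\la$ is attracted, is locally constant. Since $\Omega$ is connected and $p$ takes integer values, local constancy will then force global constancy.

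Fix $\la_0 \in \Omega$, set $p_0 = p(\la_0)$, and let $\mathbf{a}_0 = \{a_1, \dots, a_{p_0}\}$ be the corresponding attracting cycle of $f_{\la_0}$, with multiplier $\rho_0$ satisfying $|\rho_0| < 1$. In particular $\rho_0 \neq 1$, so each $a_j$ is a simple root of $f_{\la_0}^{p_0}(z) - z$. The implicit function theorem thus produces holomorphic continuations $a_j(\la)$ on a neighborhood $U$ of $\la_0$ with $a_j(\la_0) = a_j$ and $f_\la^{p_0}(a_j(\la)) = a_j(\la)$. After shrinking $U$, the points $a_j(\la)$ remain pairwise distinct and the holomorphic multiplier $\rho(\la) = (f_\la^{p_0})'(a_1(\la))$ stays in $\D$, so $\mathbf{a}(\la) = \{a_j(\la)\}$ is an attracting cycle of exact period $p_0$ for every $\la \in U$.

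To conclude $p(\la) = p_0$ on $U$, I then need to verify that $v_\la$ is attracted to $\mathbf{a}(\la)$ for $\la$ near $\la_0$. Since $v_{\la_0}$ is strictly attracted to $\mathbf{a}_0$, there is an integer $N$ and an open neighborhood $W$ of $\mathbf{a}_0$ contained in the immediate basin, on which $f_{\la_0}^{p_0}$ acts as a strict contraction in the hyperbolic metric of the basin, with $f_{\la_0}^N(v_{\la_0}) \in W$. Because $v_\la$ is affine in $\la$ by Definition~\ref{def:dns}(b), and the iterate $f_\la^N$ depends holomorphically on $\la$ near $(\la_0, v_{\la_0})$ (no pole is encountered by an intermediate iterate for $\la$ close enough to $\la_0$), continuity forces $f_\la^N(v_\la)$ into a corresponding contracting neighborhood of $\mathbf{a}(\la)$ for all $\la$ in a possibly smaller neighborhood of $\la_0$; thus $v_\la$ is attracted to $\mathbf{a}(\la)$ and $p(\la) = p_0$. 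The main subtlety to guard against is the possibility that $v_\la$ might jump into the basin of a different attracting cycle as $\la$ moves, which would change $p(\la)$. This is exactly what the continuity argument above excludes, and it is also consistent with the inclusion $\Omega \subset \Lambda^{stab}$ noted before the definition of shell component, which by Theorem~\ref{jstable}(c) guarantees normality of $\{\la \mapsto f_\la^n(v_\la)\}_n$ near $\la_0$ and so yields the same conclusion.
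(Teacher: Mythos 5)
Your proof is correct and is essentially the argument the paper has in mind: the paper does not give an explicit proof, stating only that the lemma ``follows directly from the definition,'' and your write-up supplies exactly the missing details (holomorphic continuation of the attracting cycle via the implicit function theorem since $\rho_0\neq 1$, plus continuity of a finite iterate $f_\la^N(v_\la)$ to keep the free asymptotic value in the basin of the continued cycle). Your closing remark tying this to $\Omega\subset\Lambda^{stab}$ and Theorem~\ref{jstable} is precisely the shortcut the paper is invoking.
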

Since the dynamics of all but one of the attracting cycles of $f_\la$ are fixed as $\la$ varies in a shell component,   {\em the attracting cycle} for this shell component is the one whose {\em multipler}   varies with $\la$.   Below, when we talk about an attracting cycle or its multiplier, in relation to a shell component, it is these that we mean.

Every hyperbolic component of the parameter plane of the well known exponential family $E_\la=z\mapsto e^z + \la$ is an example of a shell component.  In this case, the multiplier of the attracting cycle of $E_\la$ is never zero. This is a general phenomenon for shell components.

\begin{lemma}[Nonzero multipliers]
\label{lem:nonzeromult}
Let  $\calf_{\Lambda}$ be a dynamically natural family and let $\Omega \subset \Lambda$ be a shell component.  Then the multiplier of the attracting cycle is nonzero for all $\la \in \Omega$. 
\end{lemma}

\begin{proof}
If the multiplier of the attracting cycle were zero for $\la^*\in \Omega$, the cycle would contain a critical point. Therefore, since $v_\la$ is not persistently critical, in a neighborhood of $\la^*$ there would be two singular values in the basin. This is impossible because we are in a shell component.
\end{proof}

The following theorem explains how shell components are  parameterized by the multiplier of the attracting cycle.
Let $\Omega$ be a shell component of period $k>0$. We define the multiplier map as 
\[
\rho=\rho_\Omega: \Omega\to \D^* 
\]
where $\rho(\lambda)$ is the multiplier of the attracting cycle which attracts $v_\la$. 

\begin{theorem}[Multiplier map is a covering]
\label{multcov}
Let  $\calf_{\Lambda}$ be a dynamically natural family and let $\Omega \subset \Lambda$ be a shell component.   Then the multiplier map of the attracting cycle, $\rho: \Omega \to \D^*$,  is a holomorphic covering. 
\end{theorem}

\begin{proof}
The proof uses {\em quasiconformal surgery} on the basin of attraction of the attracting cycle that attracts $v_{\la}$. For details see \cite{BF}.
Note that because $f_\la$ is a holomorphic family  both $\rho$  and its derivative are holomorphic functions of $\la$.  

Choose a parameter $\lo \in \Omega $ such that $\omega_0:=\rho(\lo) \in \D^*$. Let $k>0$ be the period of $\Omega$ and ${\bf a^0}=\{a_0^0, \cdots, a_{k-1}^0\}$ the attracting periodic orbit to which the free asymptotic value $v_\lo$ is attracted.  Let $\cala^0$ denote its  full basin of attraction and let $A_i^0$ be the connected component of $\cala^0$ containing $a_i^0$ for $i=0,\cdots,k-1$. Assume without loss of generality that $v_\lo\in A_1^0$.

Let $W \subset \D^*$ be a (simply connected) neighborhood of $\omega_0$. For any $\omega\in W$  we define a Beltrami form $\mu_\omega$ in $A_1^0$  invariant under $f_\lo^k$ as follows.   Since ${\bf a^0}$ is attracting, there is a biholomorphic map $\psi$ from a neighborhood $U$ of $a_1^0$ to the unit disk $\mathbb D$ such that $\psi(a_1^0)=0$ and $\psi(f_\lo^k(z))=\omega_0 \psi(z)$ for every $z \in U$.   In $\mathbb D$ choose an annular fundamental domain $R_0=\{|\omega_0| < |z| < 1\}$ for the map $\psi(z) \mapsto \omega_0 \psi(z)$.  Now define the surgery using standard arguments: Construct a quasiconformal map $\tilde\phi_\omega: \mathbb D \to \mathbb D$ such that it fixes the origin and the unit circle and it maps $R_0$ to $R_\omega = \{|\omega| < |w| < 1\}$, conjugating multiplication by $\omega_0$ to multiplication by $\omega$.  In addition, it should satisfy $\tilde{\phi}_{\omega_0}=Id$ and  $\tilde\phi_\omega$ should depend holomorphically on $\omega$.   To do this, first define the map on $R_0$ and then extend it by the dynamics to all of $\mathbb D$.  The Beltrami coefficient $\tilde{\mu}_\omega$ of $\tilde{\phi}_\omega$ lifts to a Beltrami coefficient $\mu_\omega$ on $U$ invariant under the map $\psi$ that satisfies  $\mu_\omega(f^k_\lo(z)) \frac{\overline{ (f^k_\lo)'(z)}}{(f^k_\lo)'(z)}=\mu_{\omega}(z)$.   Extend $\mu_\omega$ to  the full backward orbit of $U$ so that for any inverse branch $g_n$ of $f^{-n}_\lo$ defined on a component of $\cala^0$, $\mu_\omega({g_n}(z)) \frac{\overline{ {g_n}'(z)}}{{g_n}'(z)}=\mu_{\omega}(z)$.  Extend it to be identically zero on the complement of ${\cala^0}$ in $\C$.   A Beltrami coefficient with this property is said to be {\em compatible with $f_\lo$}.  
Notice that the Beltrami coefficient $\mu_\omega$ depends holomorphically on $\omega$.  

By the Measurable Riemann Mapping Theorem \cite{AB},  there is a   quasiconformal homeomorphism $\phi_\omega:\C \to \C$ with Beltrami coefficient $\mu_\omega$.   It is unique up to post-composition by an affine map and  depends holomorphically on $\omega$.  Moreover, $\phi_{\omega_0}$ is conformal. 

 At this point we recall that we fixed the affine conjugacy class    by normalizing all the functions in the family $\calf_\Lambda$  in the same way.  Different normalizations work and/or are more convenient for different slices --  for example, in some, but not all slices,  one could  ask that $f_\la$  fix two given points in $\C$, or ask that it fix one point and also fix its multiplier. 

We can chose a uniform normalization for $\phi_\omega$  that preserves the normalization for $\calf_\Lambda$  and define the new entire or meromorphic map
\[
F_\omega:= \phi_\omega \circ  f_\lo \circ \phi\inv_\omega.
\]
This map respects the dynamics:  it has an attracting periodic cycle ${\bf a}(\omega)=\phi_\omega(\bf a^0)$ of multiplier $\omega$. Moreover, $F_\omega$ is quasiconformally conjugate to $f_\lo$ in the respective basins of attraction and conformally conjugate to $f_\lo$ everywhere else.  Since $\phi_{\omega_0}$ is conformal and fixes two points it follows that it is the identity map. Hence $F_{\omega_0} = f_{\la_0}$.

Since $f_\la$ is in a dynamically natural slice, property (d) ensures that $F_\omega=f_{\la(\omega)}$ for some $\la(\omega)\in \Lambda$.  Observe that the asymptotic value of $f_{\la(\omega)}$ is $v_\omega=\phi_\omega(v_\lo)$ and it depends holomorphically on $\omega$. Since $v_\la=v(\lambda)$ is affine,  $\lambda=\lambda(v)$ is also affine and hence $\la(\omega)=\la(v_\omega)$ is holomorphic. Since the multiplier changes with $\omega$, the map is nonconstant and thus open. Finally, $\la(\omega_0)=\la_0$ and therefore $\la(W)\subset \Omega$.

This construction defines a holomorphic local inverse of the multiplier map $\rho$ in a neighborhood of any point $\omega_0=\rho(\lo) \in \D^*$. It follows that $\rho$ is a covering map from $\Omega$ to $\D^*$. 
\end{proof}

\begin{corollary}[Connectivity of shell components] \label{dichotomy}
Let $\Omega$ be a shell component in a dynamically natural slice $\Lambda$. Then one of the two following possibilities occurs:
\begin{enumerate}
\item[(a)] $\Omega$ is simply connected and $\rho:\Omega \to \D^*$ is a universal covering (hence of infinite degree), or
\item[(b)] $\Omega$ is conformally equivalent  to $\D^*$ and $\lim_{|\omega| \to 0} \rho\inv(\omega) = \lambda^*$ where $\lambda^* \notin \Lambda$ is a parameter singularity. 
\end{enumerate}
\end{corollary}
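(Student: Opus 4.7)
The plan is to deduce the corollary from Theorem~\ref{coverings}, applied to the holomorphic covering $\rho:\Omega\to\D^*$ furnished by Theorem~\ref{multcov}. In case~(a) of Theorem~\ref{coverings} there is a conformal map $\psi:\Omega\to\lp$ with $\rho=\exp\circ \psi$; since $\lp$ is simply connected so is $\Omega$, and since $\exp:\lp\to\D^*$ is the universal cover, so is $\rho$, of infinite degree. This gives statement~(a) of the corollary at once.

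In case~(b), Theorem~\ref{coverings} yields a conformal equivalence $\psi:\Omega\to\D^*$ and an integer $n\ge 1$ with $\rho=\psi^n$, so $\Omega$ is conformally $\D^*$, in particular homeomorphic to it. The next step is to identify the end of $\Omega$ corresponding to $\psi\to 0$ with a single point $\la^*\in\hat\C$. For this I would look at $\psi^{-1}:\D^*\to\Omega\subset\C$: as an injective holomorphic map, its isolated singularity at $0$ cannot be essential (an essential singularity would contradict injectivity via Casorati--Weierstrass), so $\psi^{-1}$ extends continuously at $0$ to a unique value $\la^*\in\hat\C$; in particular $\lim_{|\omega|\to 0}\rho^{-1}(\omega)=\la^*$ is unambiguous.

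It then remains to prove $\la^*\notin\Lambda$, and I would argue by contradiction. Suppose $\la^*\in\Lambda$, so $f_{\la^*}$ is a regular member of the family, and pick any $\la_m\in\Omega$ with $\la_m\to\la^*$. Then $\rho(\la_m)=\psi(\la_m)^n\to 0$, and the attracting cycle $\mathbf a(\la)=(a_1(\la),\ldots,a_k(\la))$ of period $k$, which varies holomorphically on $\Omega$, has multiplier going to $0$. The key claim is that $\mathbf a(\la_m)$ stays bounded in $\C^k$: if some $a_i(\la_m)$ approached a pole $p$ of $f_{\la^*}$ of order $\ell$, then $f_{\la_m}'(a_i(\la_m))$ would blow up at a rate $\sim|a_i(\la_m)-p|^{-(\ell+1)}$, dominating the remaining factors of the product $\rho(\la_m)=\prod_j f_{\la_m}'(a_j(\la_m))$ and forcing $|\rho(\la_m)|\to\infty$, contrary to $\rho(\la_m)\to 0$. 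Once boundedness is granted, a subsequential limit of $\mathbf a(\la_m)$ yields a cycle of $f_{\la^*}$ of period dividing $k$ with multiplier $0$, i.e., a super-attracting cycle; by persistence of attracting basins the free asymptotic value $v_{\la^*}$ lies in its basin, so $\la^*\in\calh$. Since $\calh$ is open and $\la_m\in\Omega\subset\calh$ with $\la_m\to\la^*$, connectedness places $\la^*$ in the component of $\calh$ containing the $\la_m$, namely $\Omega$. But this is impossible, since then $\psi(\la^*)$ would be defined and equal to $0\notin\D^*$.

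The main obstacle is making the boundedness step fully rigorous: one must rule out a partial escape in which several cycle elements simultaneously approach poles of $f_{\la^*}$ and their derivative blow-ups are tempered by the other factors. The pole-approach heuristic above indicates the mechanism, but a complete argument requires the local description of $f_\la$ near its poles given by Lemma~\ref{bounded disks}, and a careful accounting to show that no combination of pole-approaches in the cycle can conspire to keep $\rho(\la_m)\to 0$.
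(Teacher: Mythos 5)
The first half of your argument is fine: deducing the dichotomy from Theorem~\ref{coverings} and showing that $\psi^{-1}$ extends continuously to the puncture via Casorati--Weierstrass (injectivity rules out an essential singularity) are both correct, and the latter is a reasonable fleshing-out of the ``$\lim_{|\omega|\to 0}\rho^{-1}(\omega)$ exists'' assertion in the statement.

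The second half --- showing $\la^*\notin\Lambda$ --- diverges from the paper and contains a genuine error. Your key claim, that if some $a_i(\la_m)$ approached a pole then the derivative blow-up would force $|\rho(\la_m)|\to\infty$, is \emph{backwards}. This is exactly the regime analyzed in Proposition~\ref{zeroinf}: there, $a_{k-1}^n$ approaches a pole and $a_0^n\to\infty$ inside an asymptotic tract, the factor $|f'_{\la_n}(a_{k-1}^n)|$ does blow up polynomially, but the factor $|f'_{\la_n}(a_0^n)|$ decays exponentially (the universal-covering structure of the tract gives $f=\exp\circ\psi+\text{const}$ there), and the product tends to $0$, not $\infty$. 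This is precisely the virtual-center mechanism, so the ``boundedness'' of the cycle is not only unproved in your sketch but can genuinely fail when $\la^*$ is a finite boundary point of a shell component in $\pmf$. You acknowledge the obstacle, but the difficulty is not a missing bookkeeping argument --- the claim itself is false in general, so the approach needs to be abandoned rather than tightened.

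The paper's argument for $\la^*\notin\Lambda$ is shorter and avoids the dynamics at $\la^*$ entirely. Since $\Omega$ is a punctured neighborhood of $\la^*$ contained in $\Lambda^{\mathrm{stab}}$, if $\la^*\in\Lambda$ then $\la^*$ is either a $J$-stable parameter or an isolated point of $\calb$. The latter is impossible by Proposition~\ref{dense}. If $\la^*$ were $J$-stable, the attracting cycle (which by Lemma~\ref{lem:nonzeromult} has nonzero multiplier on $\Omega$, since its basin contains no critical point) would persist holomorphically across $\la^*$ with nonzero multiplier, contradicting $\rho(\la_m)\to 0$; in other words $\rho$ cannot extend continuously to $\la^*$ with value $0$ because there is no superattracting cycle available. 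Hence $\la^*$ must be a parameter singularity. This route uses Proposition~\ref{dense} and Lemma~\ref{lem:nonzeromult}, neither of which appear in your sketch, and it bypasses the bounded/unbounded dichotomy of the cycle altogether --- which is exactly what you need, because that dichotomy does not resolve the way you hoped.
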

\begin{proof}
Since $\rho:\Omega\to \D^*$ is a covering,  it follows from Lemma~\ref{coverings} that either $\rho$ is a universal covering and hence  has infinite degree and $\Omega$ is simply connected, or $\rho$ has  finite degree and $\Omega$ is conformally equivalent to $\D^*$.   In this case, the point $\lambda^*:= \lim_{|\omega| \to 0} \rho\inv(\omega)$ cannot belong to $\calb(v_\la)$ because,  by Proposition \ref{dense}, $\calb(v_\la)$ has no isolated points. It follows from Lemma \ref{lem:nonzeromult} that $\rho$ cannot extend continuously to $\la^*$ because $f_\la$ has no superattracting cycles. Therefore $\la^*$ is a parameter singularity.
\end{proof}

\begin{corollary} [Quasiconformal conjugacy in shell components]
Let $\calf_{\Lambda}$ be a dynamically natural   family  and let $\Omega \subset \Lambda$ be a shell component.  Then, any  two maps $f_{\la}$ and $f_{\la'}$ with $\la,\la'\in \Omega$ are (globally) quasiconformally conjugate. 
\end{corollary}
\begin{proof}
This follows from the proof of Theorem \ref{multcov}. Indeed, let $\gamma:[0,1]\to \Omega$ be a continuous path with endpoints  $\la$ and $\la'$. Then $\sigma(t):=\rho(\omega(t))$  is a continuous path in $\D^*$ joining $\omega=\rho(\la)$ and $w'=\rho(\la')$. The image of $\sigma$ is a compact set so it can be covered by a finite number of open disks  $\{D_i\}_{1\leq i\leq N}$ in $\D^*$ each centered at a point $\sigma(t_i)$.  Using the construction in Theorem \ref{multcov}, we can construct local inverses of $\rho$ mapping $D_i$ to an open neighborhood $D'_i$ of $\gamma(t_i)$. 
 By compactness, $\gamma[0,1] \subset \cup_{i=i}^N D_i'$. Since all pairs of parameters in each $D_i'$ correspond to quasiconformally conjugate maps, it follows that $f_\la$ is quasiconformally conjugate to $f_{\la'}$.
\end{proof}

\subsection{Virtual centers of shell components. }

In standard rational dynamics, hyperbolic components have a distinguished point called {\em the center} of the component at which the attracting cycle is superattracting. This is not the case for the shell components we are studying, where the multiplier of a finite cycle cannot vanish. Nevertheless we shall see that there is a  unique point in the boundary of the shell component that plays the role of the center.

\begin{definition}[Virtual center] \label{def:vc}
Let $\calf_{\Lambda}$ be a dynamically natural   family  and let $\Omega \subset \Lambda$ be a shell component.   Let  $\rho:\Omega \to \D^*$  be the multiplier map of $\Omega$.  A point $\lambda \in \left( \partial\Omega \cap \Lambda\right) \cup \{\infty\}$ is a {\em virtual center} of $\Omega$ if there exists  a sequence $\lambda_n \to \la$, with $\lambda_n\in \Omega$, such that the multipliers $\rho(\la_n)$ tend to 0.
\end{definition}

Our goal in this section is  to describe the relation between virtual cycle parameters and virtual centers. To do this we need a preliminary result which will also be useful later on.

\begin{proposition}[Bounded or unbounded cycle] \label{indifferent} \label{tendpole}
Let $\Omega \subset \Lambda$ be a shell component of period $p\geq 1$ for a dynamically natural family $\calf_{\Lambda}$.    Let  $\la_n \in \Omega$ be such that $\la_n\to \las\in \partial \Omega  \cap \Lambda$.  Let $\ba^n=\{a_0^n,\ldots, a_{p-1}^n\}$ be the attracting cycle of period $p$ for $f_n:=f_{\la_n}$, attracting the asymptotic value $v(\la_n)$. 
\begin{enumerate}[\rm (a)]
\item Suppose the cycle $\ba^n$ stays bounded as $n\to \infty$.  Then, $f_{\las}$ has an indifferent cycle $\ba$  whose period divides  $p$ and $\ba^n\to \ba$ as $n\to \infty$.
\item Suppose that $|a_j^n| \to \infty$ as $n\to \infty$ for some $0\leq j <  p$. Then, $p>1$ and   $f^k(v(\la^*))$ is a pole for some $0\leq k \leq p-2$, thus $\las$ is a virtual cycle parameter.
\end{enumerate}
Consequently, if the maps are entire or if $p=1$, only case {\rm (a)} can occur.
\end{proposition}

For this and other results in this section we shall use the following lemma. Recall that $S(f)$ denotes the set of singular values of $f$  and $B(a,r)$ denotes the disk of center $a$ and radius $r$.

\begin{lemma}[{\cite[Lemma2.2]{ripsta99}}] \label{magic}
Let $f$ be a transcendental meromorphic or entire  function. Let $R_0>0$ be such that a periodic orbit of $f$ belongs to $B(0,R_0)$.  Let $p\geq 1$, and $R>R_0$ such that $\{f^k(S(f))\}_{0\leq k <p}\subseteq B(0,R)$, and $|z|, |f^p(z)| > R^2$. Then,
\[
\left| \left(f^p \right)'(z)\right| > \frac{|f^p(z)| \log |f^p(z)|}{16 \pi |z|}.
\]

\end{lemma}

\begin{proof}[Proof of Proposition \ref{tendpole}]
\noindent {\rm (a)}  
Since the cycle stays bounded and $\las\in\Lambda$, it follows that  for all $0\leq i\leq p-1$,  there exists a subsequence of  the sequence $a_i^n$   that  has a limit $a_i$ and that  this limit point is a fixed point of $f_{\las}^p$. This cycle must be non-repelling. Since $\las\in\partial \Omega$, it cannot be attracting.  It also  cannot    have multiplier 0 because if it did,     $\ba^*$ would be superattracting and some point in the limit cycle would be a critical point. But then, for  every $\la$ in a neighborhood of $\la^*$, the analytic continuation of this cycle would be attracting and its basin would contain a critical point, and thus  $\lambda^*$ would be in the interior of $\Omega$ and not on its boundary.  Hence  $\ba^*$ is an indifferent cycle and part (a) is proved.

\vspace{0.2cm}

\noindent {\rm (b)}   We will give a proof by contradiction.  We will  show that if the conclusion  in (b) does not hold, then for $n$ large enough  the multiplier of the cycle must be larger than one in modulus.   Set $f_*=f_{\las}$, $v_n=v(\la_n)$ and $v_*=v(\la_*)$.  

Let $c_*$ be a periodic point of $f_*$ which can be analytically continued to $c(\lambda)$ in a neighborhood of $\las$, and choose $R_0$ so that $|f_n^k(c(\lambda_n))|, |f_*^k(c_*)|< R_0$ for all $k\leq p$ and  all $n>0$.  

Let us first deal with the case $p=1$ and show that the fixed point $a_0^n$ of $f_n$ cannot tend to infinity with $n$. Suppose it does. Let $R$ be large enough  to satisfy  $R>e^{8\pi}$ and also large enough that for $n>N$,  $B(0,R)$ contains $v_n$  and all the remaining singular values of $f_n$.  This is possible  because $\la* \in \Lambda$ so that  $f_*$ is well defined as a limit of $f_n$, and thus $v_n\to v_*$ and all the other singular values (finite in number) also depend holomorphically on $\lambda$. 

Now choose $N>0$ such that for $n>N$,  $|a_0^n|>R^2$ and recall that since $p=1$, $f_n(a_0^n)=a_0^n$. Then Lemma \ref{magic} implies that 
\[
|f_n'(a_0^n)|> \frac{\log |a_0^n|}{16 \pi} > \frac{\log R}{8\pi}>1,
\]
which contradicts the assumption that $\la_n\in\Omega_n$  for all $n>0$.

Next assume that $p>1$. Suppose that the conclusion in part (b) is not true so that $f_*^k(v_*)$ is not a pole for any $k<p-1$. Then, for all $0\leq k\leq p-1$, $f_*^k(v_*)$ is well defined and  $f_n^k(v_n)\to f_*^k(v_*)$ as $n\to \infty$.  Since all the singular values of $f_n$ and $f_*$, other than the free asymptotic value, vary continuously with $\la$ and are attracted to parabolic or attracting cycles of fixed multiplier,   there exists $R>\max\{R_0,e^{8\pi}\} $ large enough, and independent of $n$, such that for all $n$, $B(0,R)$ contains the first $p-1$ iterates of all these singular values; that is 
\[
\bigcup_{0\leq k < p} f^k(S(f_n)) \subset B(0,R), \quad \text{for all $n>0$}.
\]

Since  by hypothesis $|a_j^n| \to \infty$ as $n\to\infty$, and $f_n^p(a_j^n)=a_j^n$,  there exists $N>0$ such that  $|a_j^n|>R^2$ for all $n>N$. Hence the hypotheses of Lemma \ref{magic} are satisfied for every $n>N$ and we conclude that 
\[
\left| \left(f_n^p \right)'(a_j^n)\right| > \frac{ \log |a_j^n|}{16 \pi} > \frac{\log R}{8\pi}>1.
\]
It follows that the multiplier of the cycle $\ba_n$ is larger than 1 for $n>N$ which contradicts the hypothesis that  $\la_n\in\Omega$ for all $n>0$. 
\end{proof}

With these results in hand, we can prove that virtual centers are always virtual cycle parameters. 
\begin{theorem}[Virtual centers are virtual cycle parameters] \label{vc1}
Let $\calf_{\Lambda}$ be a dynamically natural   family  and let $\Omega \subset \Lambda$ be a shell component.    Let $\Omega \subset \Lambda$ be a  shell component  of period $p\geq 1$ and let $\la^*$ be  a parameter in $\partial \Omega \cap \Lambda$.  Then, if  $\las$ is a virtual center,  $\lambda^*$ is a virtual cycle parameter.

Consequently, if $\Lambda$ is a slice of entire maps, $\Omega\cap \Lambda$ has no virtual centers or, in other words, every virtual center is at infinity.
\end{theorem}

\begin{proof}
Suppose that  $\lambda^*\in\Omega\cap\Lambda$ is a virtual center. Then there exists a sequence of parameters $\la_n \in \Omega$  such that $\la_n\to \las\in \partial \Omega  \cap \Lambda$ and the multipliers $\rho(\la_n)$ tend to $0$. Let  $\ba_n=\{a_0^n, \cdots, a_{k-1}^n\}$ be the cycle  to which the free asymptotic value $v_n=v(\la_n)$ is attracted by the iterates of $f_{\la_n}$.  

We claim that, taking a subsequence if necessary,  one of the points in the cycle must tend to infinity as $n\to \infty$. Indeed, otherwise the cycle would converge to a finite cycle $\ba^*$ of $f_\las$ with  multiplier $0$, which  is impossible by Proposition \ref{indifferent} (a).

Hence one of the points in the cycle tends to infinity. It now follows from Proposition \ref{tendpole} (b) that $\lambda^*$ is a virtual cycle parameter. 
\end{proof}

\begin{remark}  \label{remconj}
It seems plausible to believe that a partial  converse is true. More precisely,  if a virtual cycle $\las \in \partial \Omega$ is accessible from $\Omega$, then $\las$ is a virtual center. The idea for a proof would be to find a new path in $\Omega$, $\tilde{\la}(t) \to \las$ along which the multiplier would tend to 0. This could be accomplished if a path  through a preasymptotic tract of $v(t)$  such that $v(t) \to v_*$ could be found. See Conjecture \ref{conj1}.
\end{remark}

Propositions \ref{tendpole} and \ref{vc1} above have an immediate consequence for components of period one.

\begin{corollary}[Period one shell components] \label{period1}
Let $\calf_{\Lambda}$ be a dynamically natural   family  and let $\Omega \subset \Lambda$ be a shell component.   Let $\Omega$ be a shell component of period one. Then, for any sequence $\la_n\to \la^* \in \partial \Omega \cap \Lambda$, the attracting fixed point $a_n$ converges to an indifferent fixed point $a^*$ of $f_{\la_*}$. Moreover, $\partial \Omega \cap \C$ has no virtual cycle parameters and consequently no virtual centers.  
\end{corollary}


\subsection{The boundary of the shell component}
Let $\Omega$ be a simply connected shell component and as usual,  let $\lp$ denote the left half plane. Recall from Corollary \ref{dichotomy} that the multiplier map $\rho:\Omega \to \D^*$ is a universal covering. Thus, there exists a  biholomorphic map $\varphi:\lp \to \Omega $, unique up to precomposition by a M\"obius transformation, such that 
\[
(\rho\circ \varphi)(w)= e^w.
\] 
  
\begin{theorem}[Extension to the boundary]\label{boundarycont}
Let $\calf_{\Lambda}$ be a dynamically natural   family  and let $\Omega \subset \Lambda$ be a shell component.     Let $\Omega$ be a simply connected shell component in $\Lambda$ and let $\rho$ and $ \varphi$ be as above.  Then,   $\varphi$ extends continuously to a map $\varphi:\overline{\lp} \to \overline{\Omega} $, where the closures are taken in $\chat$.  Moreover, the point $\varphi(\infty) \in \chat$ is a virtual center.
\end{theorem} 

 Given $a<b\leq\infty$, we say that an arc  $\gamma:(a,b)\to \C$ {\em lands} at $z_0\in\widehat{\C}$ if $\lim_{t\to b} \gamma(t)=z_0$ in the spherical metric. 

\begin{proof}[Proof of Theorem \ref{boundarycont}]
Let $p$ be the period of $\Omega$. We first deal with the case $p\geq 2$.

Let $\omega(t)$, $t\in (0,1)$, be an arbitrary arc in $\lp$ such that  $\lim_{t\to 1} \omega(t)=i \theta^*$ for some  $\theta^*\in \R$. We want to show that as $t \to 1$,  the accumulation set of $\la(t)=\varphi (\omega(t))$ in  $\partial \Omega =\overline{\Omega}\setminus \Omega$ is a single point.   That is, $\la(t)$ lands at a point in the boundary (which could be infinity) when $t\to 1$. 

To this end,  assume that $\la(t)$ does not land at infinity and let $\la^*$ be a finite accumulation point of $\la(t)$ as $t\to 1$. This means that there exists a sequence $t_n\to 1 $ as $n\to \infty$,  such that $\la_n=\la(t_n) \in\Omega$ satisfies $\la_n\to \las$. Let ${\bf a}^n$ denote the attracting cycle corresponding to $\la_n$ and set $f_*=f_{\la_*}$. 

If the attracting cycle ${\bf a}^n$ remains bounded for all $n>0$, it follows from Proposition \ref{indifferent} that $f_{*}$ has an indifferent cycle whose period divides $p$. 

On the other hand if the cycle does not remain bounded, let $n_k\to \infty$ be a subsequence for which some point in the attracting cycle tends to infinity. Since $\la_{n_k}\to \las$, it follows  by Proposition \ref{tendpole} that $\la^*$ is a virtual center.

Thus, all accumulation points of the curve  $\la(t)$ are either parameters with a neutral cycle of period dividing $p$ or virtual cycle parameters, and both these sets are discrete subsets of $\Lambda$.  Since,  the set of accumulation points must be connected \cite[p.33]{pom},  it follows that $\la(t)$ accumulates at a single point, i.e., that $\la(t)$ actually lands at a point $\las\in \partial \Omega$ (which might be infinity). 

Since $\omega(t)$ was arbitrary among all curves tending to $i\theta$, all of them have the same property. By Lindel\"of's theorem  \cite[Prop.~2.14]{pom}, all their images by $\varphi$ must land at the same point $\la^*$. Hence $\varphi$ has an  unrestricted limit at every point $i\theta$ in the imaginary axis, and $\varphi({i\theta}), \, {\theta\in\R},$ is a continuous curve running along $\partial \Omega$.  Note that this curve might pass through infinity several times. 

It remains to consider the case that $\omega(t)\to \infty$ in $\lp$.  Using  exactly the  same arguments as above, $\lambda(t)=\varphi(\omega(t))$ must land at a single point $\la_c \in \partial \Omega$ and hence the   limit of $\varphi$ at $\infty$ exists and is independent of the choice of $\omega(t)$. In particular, if we choose $\omega(t)$ to be the negative real axis, $\lambda(t)\to \la_c$ as $t\to 1$, and the multiplier $\gamma(\lambda(t))$ tends to zero. This shows that $\la_c$ is a virtual center and, by Theorem \ref{vc1}, also a virtual cycle parameter.

Since the unrestricted limit exists at every point of $\partial \lp$,  it follows that $\varphi$ extends continuously to $\overline{\lp}$. If any of the limits is the point at infinity, continuity has to be understood in the spherical metric. 

Finally suppose that $p=1$. The same arguments as above show that $\varphi$ has unrestricted limits at all points in $\partial \lp$. Nevertheless,  since in this case $\Omega$ has no finite virtual centers, $\lambda(1)=\infty$. 
\end{proof}

Using Caratheodory's Theorem \cite[Theorem 2.1]{pom} we obtain the following corollary. 

\begin{corollary}[Local connectivity]  \label{cor:uniquevc}
Let $\Lambda$ be a dynamically natural slice of a family of  entire or meromorphic maps.  Let $\Omega$ be a simply connected shell component in $\Lambda$.  Then $\partial \Omega$ is locally connected and has a unique virtual center. Moreover, if the period of $\Omega$ is one, this virtual center is at infinity and $\Omega$ is  unbounded.
\end{corollary}

\begin{remark}
If a shell component is not simply connected, the arguments above show that $\varphi$ extends continuously to all points in on the boundary but the the role of the virtual center may be taken by a parameter singularity. An example of this can be found in the tangent family $z\mapsto \lambda \tan(z)$, where $\D^*$  is a shell component of period one  and the origin is  a parameter singularity.  In this example, the multiplier map is biholomorphic.
\end{remark}

\begin{remark}
Our methods do not show that $\partial \Omega$ has a unique virtual cycle parameter  because we have not discarded the possibility that  while $\la(1)=\varphi(i\theta)=\las\in\C$ is a virtual cycle parameter, $\la^* \neq \la_c$,  the virtual center of $\Omega$. If such a parameter were to exist, it would need to be isolated. Nevertheless,  in view of Remark \ref{remconj} we believe that this should never happen. We state this as a conjecture.
\end{remark}

\begin{conjecture}  \label{conj1}
In the setup of Corollary \ref{cor:uniquevc}, $\partial \Omega$ has a unique virtual cycle parameter which is the virtual center. 
\end{conjecture}

 Theorem~\ref{boundarycont} also allows us to define an internal structure in the simply connected shell component $\Omega$.
\begin{corollary}[Internal rays] \label{cor:internalrays}
Under  the hypotheses of Theorem~\ref{boundarycont} and Corollary \ref{cor:uniquevc}, define the internal ray in $\Omega$ of angle $\theta$ by 
\[
R_\Omega(\theta):=\{ \varphi(t+2\pi i \theta),  t\in (-\infty,0)\}.
\]
Then, all internal rays have one end at the virtual center and the other end at a point in $\partial \Omega$ (which could be infinity). If the virtual center $\la^*$  is finite no internal ray has both ends at $\la^*$. 
\end{corollary}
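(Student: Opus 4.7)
The plan is to analyze the two endpoints of the internal ray $R_\Omega(\theta)$ using the conformal uniformization $\varphi:\lp\to\Omega$ together with the boundary extension given by Theorem~\ref{boundarycont}. Parametrize the ray by $\la(t)=\varphi(t+2\pi i\theta)$ for $t\in(-\infty,0)$; the defining identity $\rho\circ\varphi=\exp$ yields the key relation
\[
\rho(\la(t))=e^{t+2\pi i\theta},\qquad |\rho(\la(t))|=e^{t},
\]
which controls the multiplier all along the ray.

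First I would treat the end corresponding to $t\to-\infty$. The curve $t\mapsto t+2\pi i\theta$ tends to $\infty$ inside a Stolz angle of $\lp$ because its imaginary part is the constant $2\pi\theta$. The proof of Theorem~\ref{boundarycont} establishes (via the Lehto--Virtanen theorem) that the angular limit of $\varphi$ at $\infty$ exists and equals $\varphi(\infty)$, which by Corollary~\ref{cor:uniquevc} is the unique virtual center $\la^*\in\partial\Omega\cap\chat$. Hence $\la(t)\to\la^*$ as $t\to-\infty$, so one endpoint of $R_\Omega(\theta)$ is the virtual center.

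Next I would treat the end corresponding to $t\to 0^{-}$. Here the curve converges to the finite boundary point $2\pi i\theta\in\partial\lp$, and Theorem~\ref{boundarycont} says that $\varphi$ extends continuously to $\overline{\lp}\cup\{\infty\}$. Therefore $\la(t)\to\varphi(2\pi i\theta)\in\partial\Omega\subset\chat$, which a priori could be a finite point or $\infty$. This handles the ``other end'' claim.

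It remains to rule out that both ends coincide when $\la^*$ is finite. Suppose for contradiction that $\la^*$ is finite and $\varphi(2\pi i\theta)=\la^*$ for some $\theta$. Then $\la(t)\in\Omega$ with $\la(t)\to\la^*$ as $t\to 0^-$, so by the very definition of a virtual center we must have $\rho(\la(t))\to 0$. But the formula above gives $|\rho(\la(t))|=e^{t}\to 1$, a contradiction. Thus no internal ray has both ends at a finite virtual center. The only delicate point in this plan is the application of the Lehto--Virtanen angular-limit statement at $\infty$; it is justified because horizontal-line approaches to $\infty$ in $\lp$ stay in a Stolz angle (bounded imaginary part), exactly the setting of the argument already used in Theorem~\ref{boundarycont}.
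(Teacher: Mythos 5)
Your proof is correct, and it fills in precisely the argument the paper leaves implicit: the corollary is stated directly after Theorem~\ref{boundarycont} with no separate proof, and the intended argument is exactly the one you give. Two small remarks. First, once Theorem~\ref{boundarycont} provides a genuine \emph{continuous} extension of $\varphi$ to $\overline{\lp}\cup\{\infty\}$, the $t\to-\infty$ endpoint is immediate: every curve in $\lp$ tending to $\infty$ has image tending to $\varphi(\infty)$, which is identified as the unique virtual center in the proof of that theorem; re-invoking Lehto--Virtanen and Stolz angles is harmless but redundant at this stage, since that machinery was already consumed in establishing the extension. Second, the only genuinely new content in this corollary is the last sentence, and your treatment of it is the right one: if $\la^*$ is finite and some $\varphi(2\pi i\theta)=\la^*$, then $\la(t)\to\la^*$ as $t\to 0^-$ forces $\rho(\la(t))\to 0$ by Definition~\ref{def:vc}, while $\rho\circ\varphi=\exp$ gives $|\rho(\la(t))|=e^{t}\to 1$, a contradiction.
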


Notice that the internal rays foliate the shell component.  The rays landing at parameters corresponding to parabolic cycles of multiplier one, that is  the preimages of $\{0,1\}$ under the multiplier map extended to $\partial\Omega$,  divide $\Omega$  into fundamental domains which are mapped one to one to the punctured disk by the multiplier map $\rho$. See Figure \ref{sketch}.

We have shown that simply connected shell components of period one are always unbounded.    In the next section we look at some examples of dynamically natural slices of parameter spaces for specific families.   The first is the  exponential family $\{E_{\lambda}=e^z+\lambda\}_{\la \in \C}$  where all hyperbolic components are unbounded.   Other examples are dynamically natural slices  $\Lambda$ of meromorphic families $\calf \subset \pmf$. In these parameter planes we observe that all of the simply connected shell components of period greater than one are bounded.  These observations lead us to the following conjecture. 

\begin{conjecture}[Boundness conjecture]
Let $\Lambda$ be a dynamically natural slice of the meromorphic family $\calf \subset \pmf$.  Let $\Omega \subset \Lambda$ be a simply connected shell component  of period $p  \geq 2$.   Then $\Omega$ is bounded. 
 \end{conjecture}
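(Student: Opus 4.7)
The plan is to argue by contradiction, extending the techniques of Proposition~\ref{zeroinf} to the case where parameters escape to infinity in $\Lambda$. Suppose $\Omega$ is an unbounded simply connected shell component of period $2$. By Theorem~\ref{boundarycont} together with Corollary~\ref{cor:uniquevc}, the unique virtual center of $\Omega$ then lies at $\infty$, and by the definition of virtual center we may select a sequence $\la_n\in\Omega$ with $\la_n\to\infty$ along which $\rho(\la_n)\to 0$. Write $\{a_0^n,a_1^n\}$ for the attracting $2$-cycle of $f_{\la_n}$, labelled so that $v_n:=v(\la_n)$ lies in the component $A_1^n\ni a_1^n$, while $a_0^n\in A_0^n$ is the component containing an asymptotic tract over $v_n$. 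Since $v$ is a nonconstant affine function of $\la$, we have $|v_n|\to\infty$.

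The first step is to adapt Proposition~\ref{zeroinf} to this degenerating setting. Mimicking its proof, one tracks the asymptotic tract $T_n\subset A_0^n$ of $v_n$ and the disks $B_n^i$ of Lemma~\ref{bounded disks} that contain the poles $p_i(\la_n)$ and accumulate at infinity. The arguments used there show that an inverse branch of $f_{\la_n}$ cannot switch along a path in $\Omega$, so after passing to a subsequence there is a single tract function $T(\la)$ in which $a_0^n$ lies, and inverse-branch uniqueness together with $\rho(\la_n)\to 0$ forces $a_0^n$ deep inside $T(\la_n)$ and $a_1^n$ arbitrarily close to a fixed pole $p_{i(n)}(\la_n)$. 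In Proposition~\ref{zeroinf} the finiteness of $\la^*$ was used only to guarantee continuity of the holomorphic functions $p_i(\la)$ and of the asymptotic tract structure; in our setting one works in a neighborhood of $\infty$ in the compactification of $\Lambda$, using Condition~(a) of Definition~\ref{def:dns} and the fact that the poles and asymptotic tracts of $f_\la$ extend meromorphically to $\infty$. The upshot is that the sequence $\la_n$ realises a ``virtual cycle at infinity'': $|v(\la_n)-p_{i(n)}(\la_n)|\to 0$ as $n\to\infty$ for some fixed pole function $p_i$.

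The main obstacle, and the content of the last step, is a rigidity statement: one must show that no such sequence can exist in a dynamically natural slice of $\pmf$. Because $v(\la)=\la+c$ is affine and $p_i(\la)$ is a holomorphic function on $\Lambda$, the condition $|p_i(\la)-v(\la)|\to 0$ along $\la_n\to\infty$ forces $p_i(\la)$ to be asymptotic to $\la+c$ at infinity, which is a strong constraint on the parameter dependence of the poles. In concrete families (the tangent family $\la\tan z$, Newton's method for $e^z+\la z$, and the other families of Section~\ref{sec:examples2}) a direct computation shows that $p_i(\la)/v(\la)$ tends either to a constant different from $1$ or to $\infty$, so the constraint is violated and the conjecture holds in those cases. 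A general argument would use Nevanlinna's theorem (Appendix) together with Condition~(f) of Definition~\ref{def:dns} to rule out parameter singularities at infinity and to obtain a uniform lower bound on $|p_i(\la)-v(\la)|$ as $|\la|\to\infty$ in $\Lambda$. Producing such a bound in full generality for slices in $\pmf$ is the missing ingredient, and this rigidity step is what keeps the statement at the level of a conjecture.
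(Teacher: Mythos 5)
This statement is a \emph{conjecture} in the paper, not a theorem: the authors explicitly write, just before it, that shell components of period larger than one are numerically observed to be bounded, but that ``At present, we can only prove this fact for particular families like the tangent family.'' There is therefore no proof in the paper to compare your proposal against, and you should be aware from the outset that you are being asked to supply an argument the authors do not have.

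Your strategy --- run the machinery of Proposition~\ref{zeroinf} along a sequence $\la_n\to\infty$ in $\Omega$ and extract a ``virtual cycle at infinity'' --- is the natural one, and you are right to identify the sticking point. Proposition~\ref{zeroinf} uses the finiteness of $\la^*$ in an essential way: the poles $p_i(\la_n)\to p_i(\la^*)$, the disks $\hat B_i$ converge, the map $f_{\la^*}$ exists, and the landing of the cycle on a pole of $f_{\la^*}$ is then forced. When $\la_n\to\infty$ none of this convergence is supplied by the definition of a dynamically natural slice; your assertion that ``the poles and asymptotic tracts of $f_\la$ extend meromorphically to $\infty$'' is not a consequence of Definition~\ref{def:dns} and in general has no reason to hold. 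So the reduction to the asymptotic constraint $|v(\la_n)-p_{i(n)}(\la_n)|\to 0$ is already unjustified, not merely the final rigidity bound. Your conclusion that the proposal stays at the level of a conjecture is correct; the gap sits one step earlier than you place it.

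A further warning about the ``concrete family'' check you invoke: the tangent family $\la\tan z$ is a dynamically natural slice only in the \emph{relaxed} sense of Remark~\ref{relax}, and the remark immediately following the conjecture (together with Figure~\ref{fig:tan} and Example~1(c)) points out that it \emph{does} have an unbounded simply connected shell component of period~2. Yet in the tangent family the poles are $\la$-independent, so $p_i(\la)/v(\la)\to 0$, exactly the condition you claim ``violates the constraint'' and forces boundedness. The tangent family therefore satisfies your asymptotic criterion while still having an unbounded period-$2$ component, which shows that the criterion is not the right rigidity condition: the degeneration at infinity can be realized through mechanisms (here, the two asymptotic values escaping together) that your analysis does not capture. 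Any successful approach would have to isolate exactly what feature of the strict-sense hypothesis excludes such degenerations, and that is the genuinely open part of the problem.
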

\begin{remark} If the slice is defined in the relaxed sense of Remark \ref{relax}, there may be a finite number of unbounded components of period greater than $1$ because of the relations among the asymptotic values.  An example is given by the tangent family, where a shell component of period two is unbounded. \end{remark}

\section{Examples}\label{sec:examples2}

Our results apply to very general families of entire and meromorphic functions.   To find examples, we need to look at families whose singular sets depend holomorphically on the parameters.  We also want to be sure that if we perturb a hyperbolic map in our family by a holomorphic motion, the resulting map is still in the family.  And finally, to compute a dynamically natural slice, we would like to have an explicit formula for the maps that contains the parameters as constants.  In the appendix, we show how to find fairly large classes of families with these properties. 

The examples in this section are all families of entire and meromorphic functions that have two asymptotic values and either no critical values or just one critical value.   That the formulas we use for the functions are valid under perturbation follows from the material in the appendix.

 \begin{example1}  Any  meromorphic function with exactly two asymptotic values and no critical points belongs to the family 
  $$\calf_2 = \left\{ \frac{a e^{z} + b e^{-z}}{ce^{z} + d e^{-z}}, \, \, a,b,c,d \in\C, ad-bc =1 \right\}. $$
 See the Appendix and the references there. The asymptotic values are $a/c$ and $b/d$.  If $b$ or $c$ is equal to zero, the function is the exponential function and infinity is an asymptotic value.  Otherwise both asymptotic values are finite.   Any function in the family is determined by three complex constants. 

 We will look at several dynamically natural slices of $\calf_2$  and use different normalizations for each. 


 \begin{enumerate}[\rm(a)]
  \item \label{ex 1a} The exponential family $E_\la(z)= e^z + \lambda$ is one of the best understood dynamically natural slices of $\calf_2$.  It is the slice defined by fixing one of the asymptotic values of the functions $f_{\la} \in \calf_2$ at infinity by setting $c=0$ and taking the other as the parameter $\la=b/d$.  We can put these functions into the form above, normalizing by an affine conjugation that replaces  $z$ by $2z$.  Choosing   the coefficients as $a=1/2, d=2, c=0$ and $b=\la$ we get  $E_{\la}(0)= 1 + \la$.    The infinite asymptotic value  has neither a forward nor backward images (poles).  Thus the dynamics of the functions in this slice are determined by the orbit of the other asymptotic value,  $v_\la=\la \in \Lambda=\C$.   
 Since every meromorphic map with these properties is affine conjugate to a member of this family, property \ref{propconj} holds.

 Note that another normalization for $\{E_\la\}_\la$ in common use is one that fixes the second asymptotic value at the origin.  The family then takes the form $a e^z$ and the parameter is an affine function of the first iterate of the finite asymptotic value.  Standard references include  \cite{bakrip,devfagjar,sch,rempethesis}. 

Figure~\ref{fig:exponential family} shows the parameter plane for $E_\la$.  It is known that all hyperbolic components  in $\Lambda$ are unbounded and simply connected and that the multiplier goes to zero as the parameter goes to infinity inside the component;  that is,  infinity acts as a virtual center for all the hyperbolic components.   
 In the figure, components are colored according to their respective period, yellow for 1, cyan-blue for 2, red for 3, brownish green for 4, etc.
\begin{figure}[htb!]
\captionsetup{width=0.85\textwidth}
\centerline{
\includegraphics[width=0.3\textwidth]{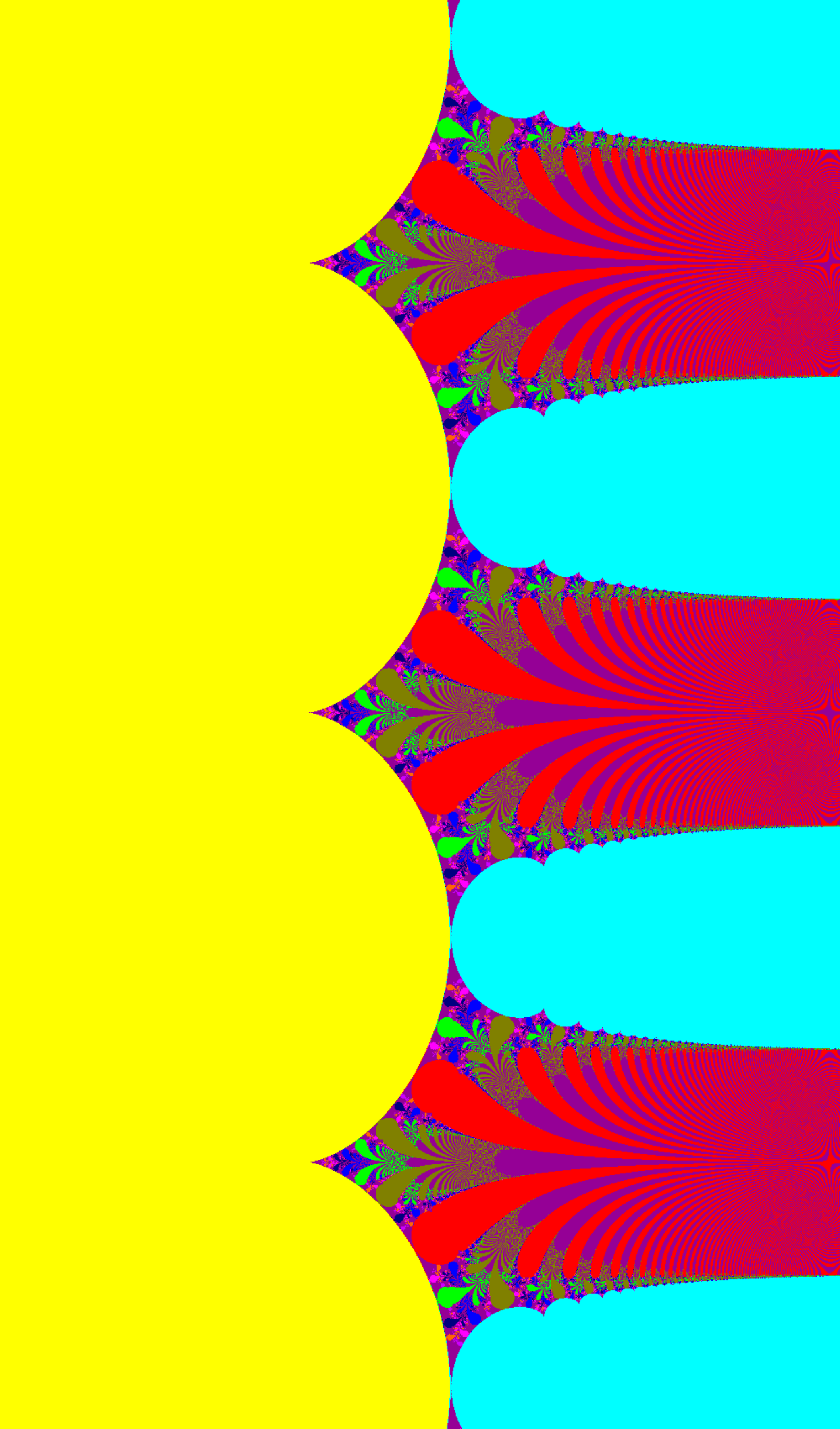}}
\caption{\label{fig:exponential family} \small The parameter plane of the exponential family $E_\la(z)=\exp(z)+\la$. Components are colored according to their respective period, yellow for 1, cyan-blue for 2, red for 3, brownish green for 4, etc.}
\end{figure}

  \item\label{ex 1b'}
   Another dynamical slice that can  be extracted from $\calf_2$ is formed by requiring the origin to be a fixed point with persistent multiplier $\rho_0, \, |\rho_0|<1$.   The resulting slice is $\Lambda=\C\setminus \{0,\rho_0/2\}$ and the maps have the form
 \[
f_{\lambda}(z)=\frac{e^z-e^{-z}}{(1/\lambda) e^z + (1/\mu) e^{-z}},
\]
with $\mu=\rho_0\lambda/(2\lambda-\rho_0)$. The asymptotic values are $\la$ and $-\mu$ and at least one of them is attracted by the origin.  
See \cite{CJK19,CJK20}  for a discussion of the topology of the capture component for maps in the slice where $\rho=2/3$ and  \cite{GaKo} for a discussion of properties of maps in the slice where $\rho_0=1/2$. To see that condition (e) is satisfied, observe that any conjugacy which is conformal in the basin of attraction of 0 keeps the multiplier of this fixed point unchanged.  Since the resulting conjugate map again has two asymptotic values and no critical points, it belongs to $\calf_2$. Using an affine conjugacy, we may require that the new fixed point  be the origin again;  thus the new map belongs to the slice.

Figure~\ref{fig:puremero2a} shows the parameter plane for for  $\rho_0=2/3$ and hence  $\mu=-\lambda/(1-3\lambda)$ and $\Lambda=\C \setminus \{0, 1/3\}$.  

\begin{figure}[hbt!]
\centering
\captionsetup{width=0.85\textwidth}
\includegraphics[width=0.4\textwidth]{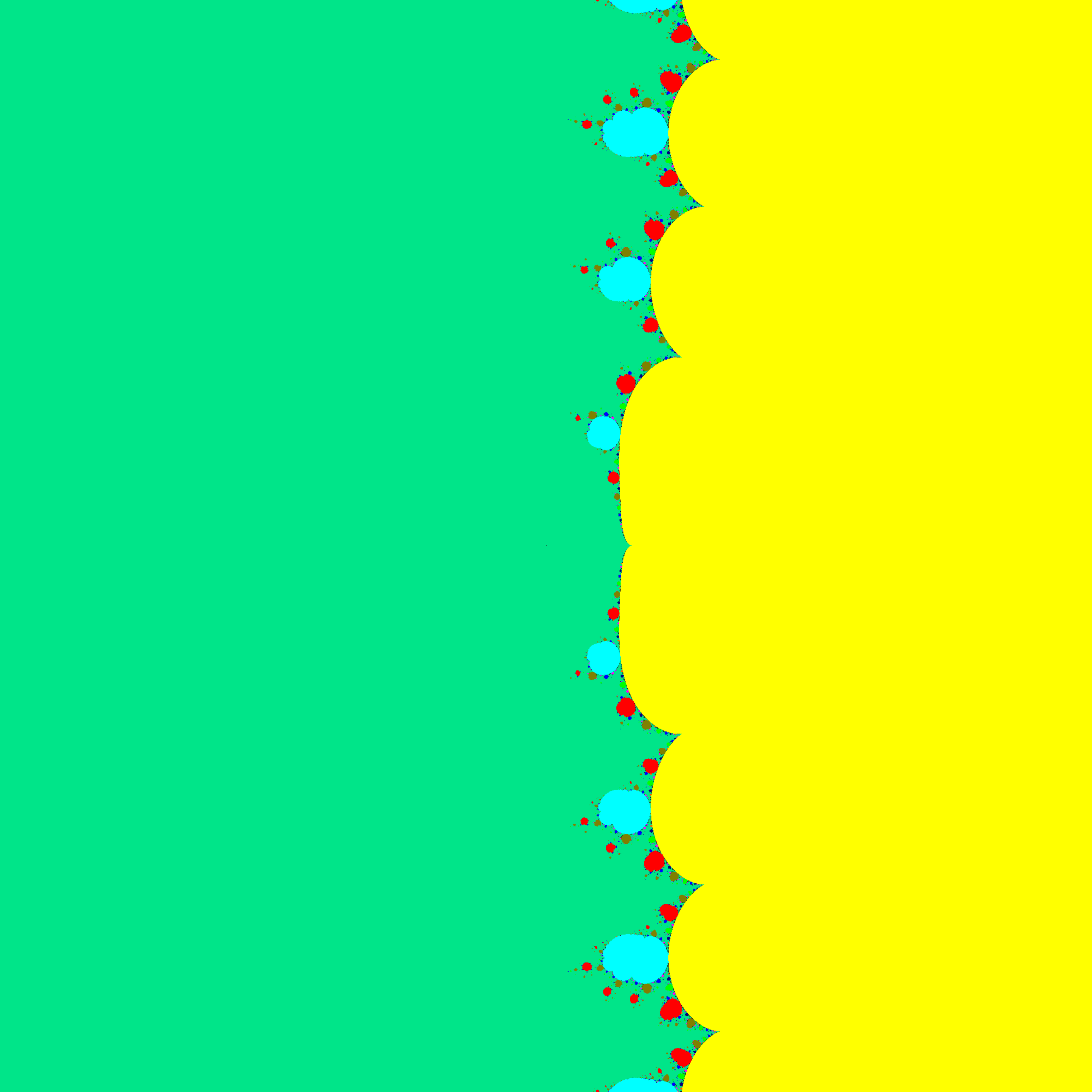} \hfil
\includegraphics[width=0.42\textwidth]{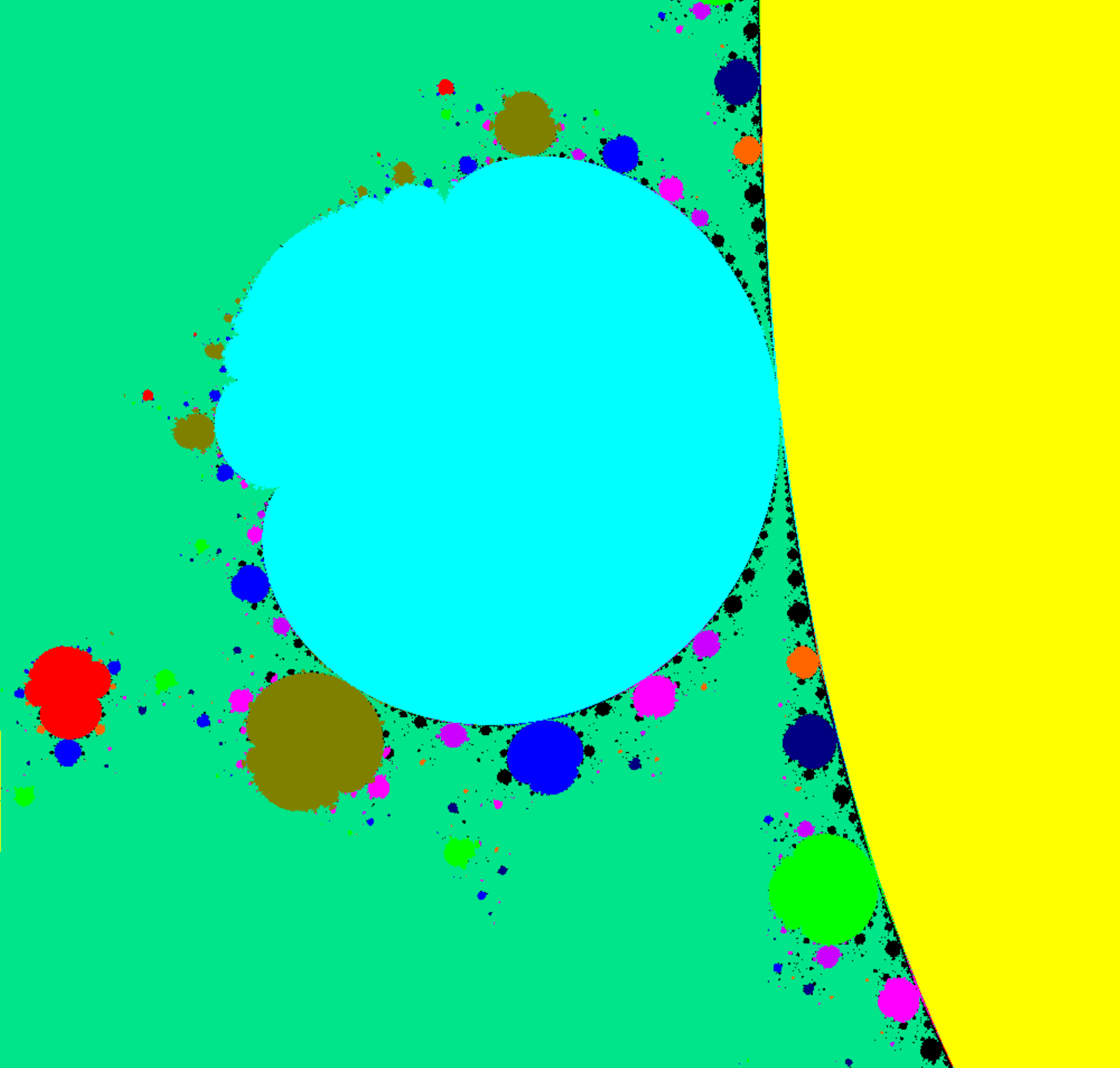}
\setlength{\unitlength}{0.9\textwidth}
	\put(-0.78,0.222){\circle*{0.005}}
	\put(-0.8,0.215){\scriptsize$0$}
	\put(-0.754,0.19){\line(1,0){0.028}}
	\put(-0.754,0.19){\line(0,-1){0.028}}
	\put(-0.754,0.162){\line(1,0){0.028}}
	\put(-0.726,0.162){\line(0,1){0.028}}
	\put(-0.726,0.176){\vector(1,0){0.25}}
\caption{\label{fig:puremero2a} \small Left: The $\la-$plane of the meromorphic  family $f_\la$, showing the dynamics of the free asymptotic value $\la$. Dimensions $[-12,12]\times[-12,12]$. Right: Zoom in on a shell component of period two (cyan-blue). Color coding is explained in the text. Dimensions $[0.5, 1.9]\times[-3.25,1.92]$}
\end{figure}

As in Figure~\ref{fig:exponential family}, the color represents the period of the attracting orbit that the free asymptotic value $\lambda$ is attracted to.   It does not reflect the behavior of the orbit of the other asymptotic value $-\mu$.   In the unbounded green capture component on the left  $\lambda$ is attracted to the origin.   As we see below, $-\mu$ may or may not also be attracted to the origin in this component.   In the yellow shell component  on the right, $\lambda$ is attracted to an attracting fixed point  different from zero while $-\mu$ is attracted to zero. It follows from Theorem C (c) that this component is indeed unbounded.   In the cyan-blue bounded shell components, $\lambda$ is attracted to a cycle of period $2$; in the red ones the attracting cycle has period three, etc.  Bifurcations occur at parabolic parameters as usual, giving rise to shell components of all periods attached to the boundary of any given shell component.

In Figure~\ref{fig:puremero2b} we   again plot the $\la-$plane but we show the dynamics of the second asymptotic value $-\mu$.    On the unbounded green  region of Figure~\ref{fig:puremero2b}, $-\mu$ is attracted to the origin.  This figure should be superimposed on top of Figure~\ref{fig:puremero2a} near the origin in the large green component on the left in Figure~\ref{fig:puremero2a}; it is relatively so small that it almost vanishes.   In intersection of the green regions of the the combined figure  both asymptotic values are attracted to the origin.   On the complement of the green region in  Figure~\ref{fig:puremero2a},  $\la$ is attracted to the origin and $-\mu$ is free. The yellow region is a shell component of period one which should have a virtual center at the parameter $\la=1/3$, (where $\mu=\infty$)  but this is a parameter singularity.   

Observe that $\mu$ is not an affine function of the parameter $\lambda$;  this would explain the numerics which seem to indicate that  this period one component is bounded. In fact, if we were to reparametrize the family so that $\mu$  depended affinely on the parameter, we would see the same picture as in  Figure~\ref{fig:puremero2b}  because of  the symmetry in the equation connecting $\mu$ and $\lambda$.

\begin{figure}[hbt!]
\centering
\captionsetup{width=0.85\textwidth}
\includegraphics[width=0.5\textwidth]{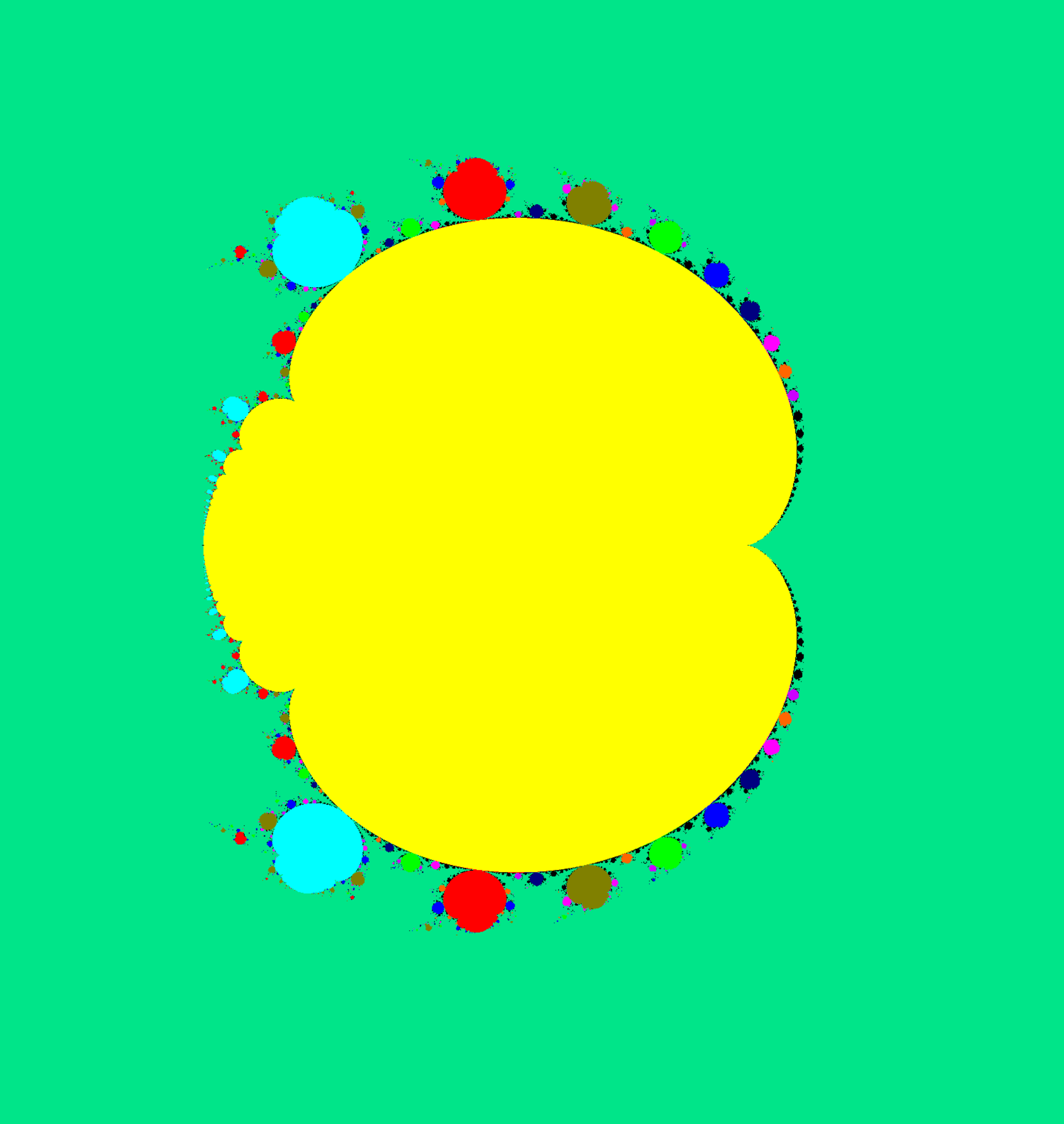}
\setlength{\unitlength}{0.5\textwidth}
	\put(-0.81,0.55){\circle*{0.02}}
	\put(-0.795,0.543){\scriptsize $\frac{1}{3}$}
\caption{\label{fig:puremero2b} \small The $\la-$ plane of the family $f_\la$, showing the dynamics of the second asymptotic value $-\mu=\la/(1-3\la)$. Dimensions $[0.28,0.44]\times[-0.08,0.08]$.}
\end{figure}

\item\label{ex 1c'} 
A slice  of $\calf_2$  is given by  the tangent family of maps 
$$T_\la(z)=\la \tan z$$  with $\Lambda=\C^*$ and $v_\la=\la i$ \cite{DK,KK,KY}.  This is a  dynamically natural slice in the  relaxed sense of Remark \ref{relax}  because the second asymptotic  value is      $s_1(\la)=- v_\la=-\la i$ and $T_\la$ is symmetric. It follows that the bifurcation locus is the same for both singular values.      Condition (e) should be replaced by the condition that the relation $s_1(\la)=- v_\la=-\la i$ must persist under deformation.   
Figure~\ref{fig:tan} shows the parameter plane of $T_\la$.\footnote{ Note that the maps $\lambda \tan{z}$  and $\lambda \tanh {z}$ are conjugate under $z \to iz$ so they are dynamically the same.} 
  There is a lot of symmetry in this family. We have
$$T_{\lambda}(-z) = T_{-\lambda}(z)  \mbox{   and }  T_{\lambda}(-z)= -T_{\lambda}(z) \mbox{   so that } $$
$$ T_{-\lambda}(-z) = T_{\lambda}(z).$$
In addition
$$ T_{\bar\lambda}(z)=\overline{T_{\lambda}(\bar{z})}. $$
This says, for example, that if for  some $\lambda_0$,  $z_1$ is a fixed point of $T_{\lambda_0}$,  then $T_{-\lambda_0}(z_1) = -z_1$  so that $z_1$ is a period $2$ point of $T_{-\lambda_0}$.  It also says that  complex conjugate values of $\lambda$ have conjugate periodic cycles.
   
\begin{figure}[htb!]
\centering
\captionsetup{width=0.9\textwidth}
\includegraphics[width=0.5\textwidth]{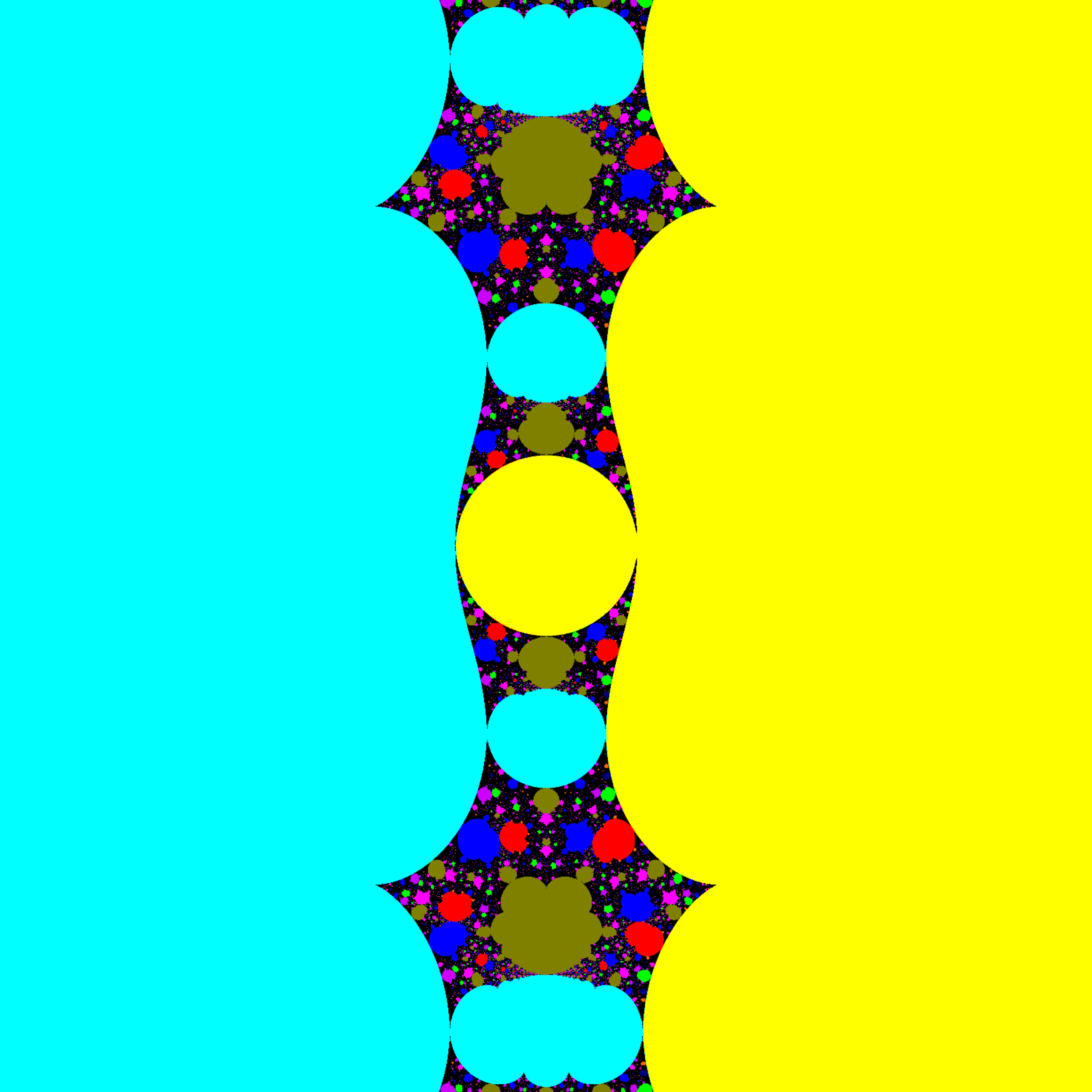}
\setlength{\unitlength}{0.5\textwidth}
	\put(-0.5,0.5){\circle*{0.01}}
	\put(-0.49,0.49){\scriptsize $0$}
\caption{\label{fig:tan} \small The parameter plane of the family $T_\la(z)=\la \tan z$. Dimensions $[-6,6]\times[-6,6]$.}
\end{figure}

   The figure shows the $\lambda$ plane where we can observe these symmetries.    We only follow the orbit of one asymptotic value, $\lambda i$, and we color the components based on the period of the cycle it is attracted to.   The colors do not reflect the behavior of the orbit of the other asymptotic value.       Thus, we see the same color  for a $\lambda$ value for which there are two separate attracting periodic cycles of period $2k$ as we do when there is a single attracting periodic cycle of period $2k$ attracting both asymptotic values.    

There is a single non-simply connected component, the punctured unit disk,    for which both asymptotic values are attracted to the origin.     There are two unbounded components, one on the right (yellow) and one on the left (cyan-blue).  In the one on the right, $\lambda i$ is attracted to a fixed point  $a_0$ with multiplier $\rho_0$,  and $-\lambda i$ is attracted to $-a_0$ with the same multiplier.  It has the same color as the unit disk.  In the unbounded component on the left, both $\lambda i$ and $-\lambda i$ are attracted to the attracting period two cycle $a_0, -a_0$ and its multiplier is $\rho_0^2$.     Neither of these components has a finite virtual center.  Although the left one is colored for period $2$,  it cannot  have a finite virtual center because if $\lambda^*$ were a finite virtual center, $-\lambda^*$ would  be a virtual center for the unbounded component on the right of period $1$, contradicting Corollary~\ref{cor:uniquevc}.

 Note that, except for the punctured disk, each bounded component is paired with another bounded component;  the two have a common virtual center and are tangent there.  The two unbounded components can be thought of as paired at infinity.   The relationship between these component pairs is discussed in   in \cite{KK} and in detail for $\lambda$ on the imaginary axis in \cite{CJKBif}. 
  
\end{enumerate}
\end{example1}
\begin{example1}
Functions in $\calf_2$ can be composed with rational or polynomial functions and, up to affine conjugation, the dynamics of the composed functions remain invariant under  quasi-conformal deformations supported on the Fatou set (see Theorem \ref{thm:function families} in the Appendix).  Below we describe  dynamically natural slices formed by pre- and post- composition with a quadratic polynomial $Q(z)=az^2 +bz +c$.  
\begin{enumerate}[{\rm (a)}]
\item\label{ex 2a'} 
 If we start with an an $ f \in \calf_2$ with finite asymptotic values and pre-compose by a degree two polynomial, $Q(z)=az^2 +bz +c$,  the resulting function
\[
g(z)=f(az^2 +bz +c)
\]
has  the same two asymptotic values as $f$ does.   Since   each  asymptotic tract of $f$ has two pre-images under $Q$, each asymptotic value of $g$ has two asymptotic tracts and so has multiplicity two.   Since $f$ has no critical points, $g$ has a single critical point at $-b/2a$ and a single critical value at $f(-b/2a)$.
If we assume the origin is fixed, then $c=0$.    If we assume it is also a critical value then $b=0$.   Now making one of the asymptotic values land on the fixed point $0$, we can form a dynamically natural slice.   This determines the coefficient $a$. This is a slice  in the relaxed sense of Remark \ref{relax},  because one of the singular values is preperiodic. 
The functions in the slice can then be written 
\[
g_{\la}(z)=\frac{e^{z^2} - e^{-z^2}}{\frac{1}{\la} e^{z^2} + (\frac{1}{\sqrt{\pi i}})e^{-z^2}}
\]
The asymptotic values are $\lambda$ and $-\sqrt{\pi i}$.  The singular parameter values are at $0$ and $-\sqrt{\pi i}$.
Theorem~\ref{thm:function families} implies that condition (d) holds. 

Figure~\ref{fig:BigpuremeroPiSq} shows the $\la$-plane.  The green components are capture components where the orbit of $\la$ falls into the basin of attraction of the origin. In the unbounded yellow shell components, $\la$ is attracted to an attracting (not super attracting) fixed point.   In the cyan-blue (bounded) components it is attracted to a cycle of period two, in the red ones to a cycle of period three and in the brownish green ones to a cycle of period four.  The components come in pairs because the asymptotic values have multiplicity two.   This is discussed  in \cite{CK19}.
\begin{figure}[htb!]
\centering
\captionsetup{width=0.85\textwidth}
\includegraphics[width=0.4\textwidth]{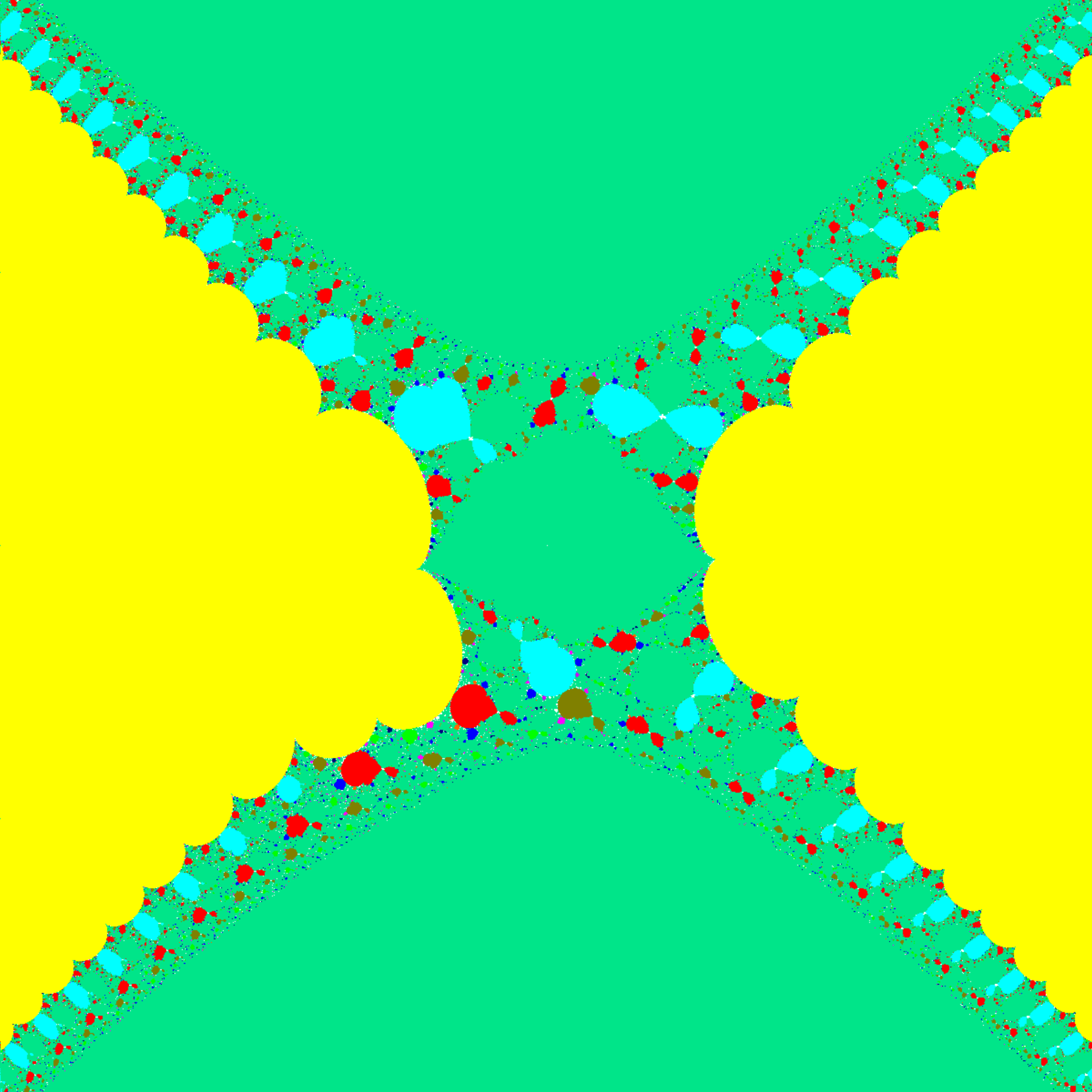}
\caption{\label{fig:BigpuremeroPiSq}  \small Dynamically natural slice for functions in $\calf_2$ pre-composed with a quadratic polynomial.}
\end{figure}


\item\label{ex 2c'}

We can form a  dynamically natural slice, in the relaxed sense, with the functions $h=af^2 + bf+c$,  for $f \in \calf_2$ by  assuming the origin is a super attracting fixed point and that the two asymptotic values coincide.  In this case  the functions in the slice each have one asymptotic value of multiplicity two.  We can write these functions as 
\[
\lambda \tanh^2 z.
\]
   The parameter $\lambda$ is the free (double) asymptotic value and so determines a dynamically natural slice of the parameter space $\Lambda = \C \setminus \{0\}$.   Condition (e) is satisfied in the modified sense.

In the left plot of Figure~\ref{fig:atan2hz} we see the $\lambda-$plane.  
In the center of the figure we see a green  capture component formed by parameters for which the asymptotic value belongs to the immediate basin of $0$.   This component is doubly  connected because of the puncture at $\la=0$. The remaining green components are   capture components  for higher iterates.  

   In the two yellow  shell components the  period is  one.  They are unbounded in accordance with  Proposition~\ref{period1}.  The remaining shell components are all of higher period and numerics indicate that they are all bounded. We see that they are grouped into quadruples that share a virtual center. This is because the asymptotic value has two asymptotic tracts and each asymptotic tract has two pre-asymptotic tracts.   
 
\begin{figure}[htb!]
\centering
\captionsetup{width=0.85\textwidth}
\includegraphics[width=0.45\textwidth]{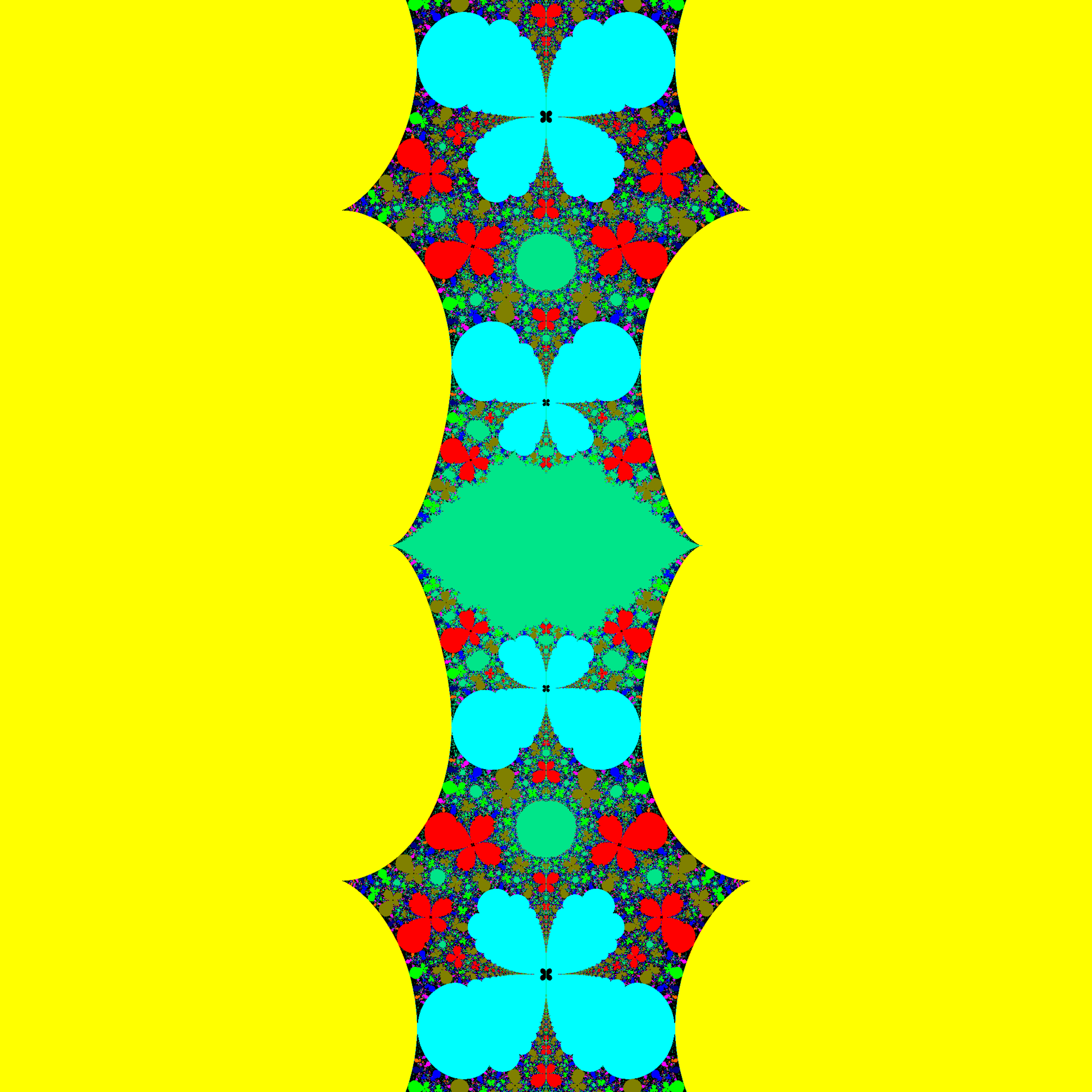} \hfil
\includegraphics[width=0.45\textwidth]{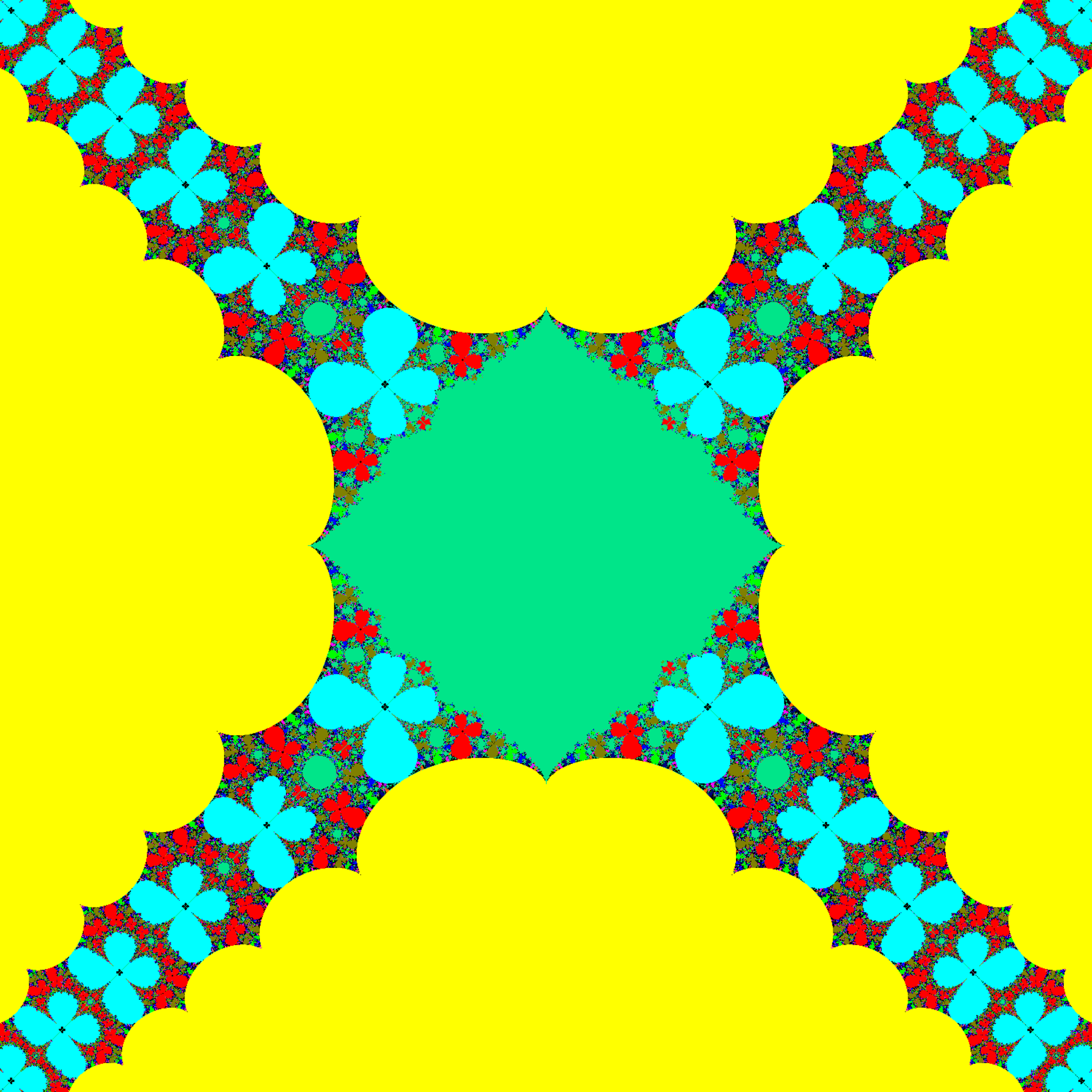}
\caption{\label{fig:atan2hz} \small Left: Parameter plane of the family $\lambda \tanh^2 z$. Dimensions $[-6,6]\times [-6,6]$. Right: Parameter plane of the family $\lambda \tanh z^2$.  Dimensions $[-3,3]\times [-3,3]$}
\end{figure}

Compare this figure to the right plot in Figure~\ref{fig:atan2hz}  which shows the parameter plane for the family $\lambda \tanh z^2$.   It  is essentially the ``square root" of this slice, since both maps are semiconjugate by $z^2$.  The asymptotic values now have multiplicity $2$.  These examples are investigated further in \cite{CK19} where it is proved that all shell components of period greater than $1$ are bounded.

 \end{enumerate}
\end{example1}
\begin{example1}\label{hybrid}
Our last example is a slice in the family $R(z) e^{Q(z)}$ (see the Appendix) where $R(z)$ is rational and $Q(z)$ is a polynomial.   Choosing the degrees of $R$ and $Q$ to be $1$, we obtain a function with two asymptotic values, one of which is infinity.  It also has two critical points and two critical values.   For our slice,  we fix one of the critical points at infinity so it is both a critical point and an asymptotic value.   We let   the second asymptotic value $a$ be free.   We normalize so that the origin is an attracting fixed point with fixed multiplier  $|\rho|<1$; the origin attracts the critical value that is the image of the finite critical point which we take as $\rho/(\rho+a)$.   Note there is then a single pole at $-a/(\rho+a)$.   With this normalization, the functions depend only on $a$ and have the form
\[  f_a(z) = a \big( 1-  \frac{a e^z}{(\rho+a)z+a} \big).    \]

\begin{figure}[htb!]
\centering
\captionsetup{width=0.85\textwidth}
\includegraphics[width=0.45\textwidth]{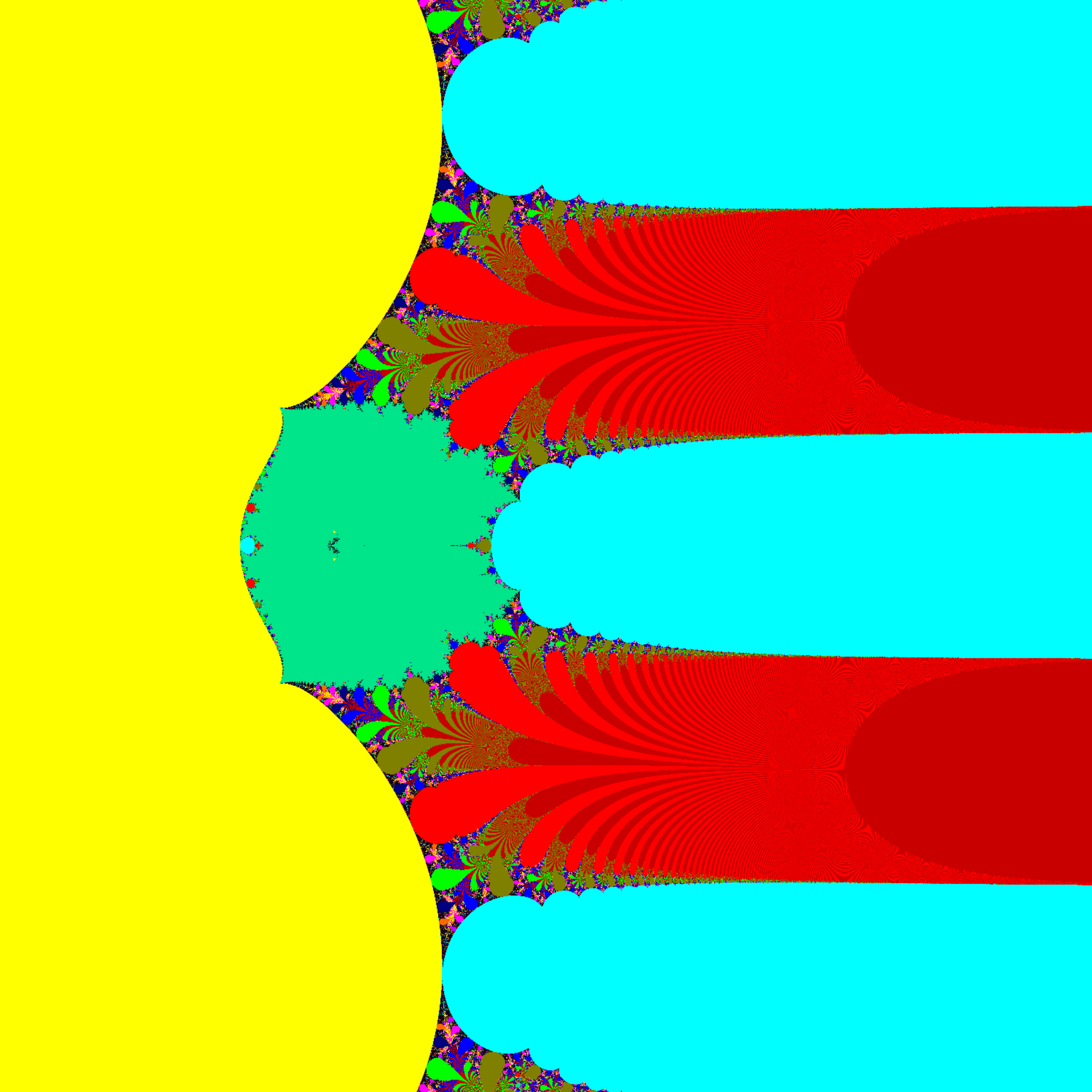} \hfil
\includegraphics[width=0.45\textwidth]{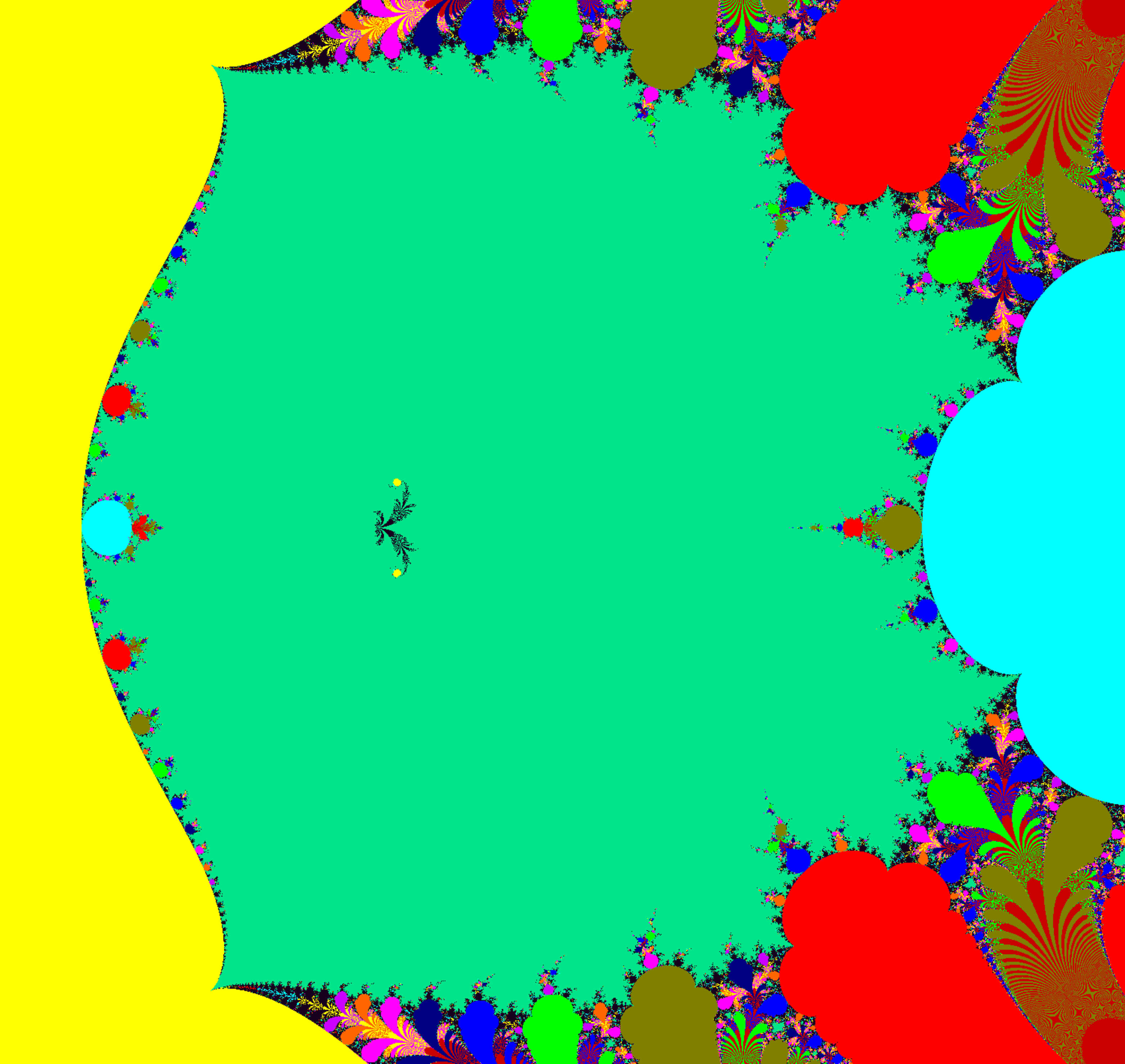}
\caption{\label{fig:newexamp} \small Left: Parameter plane of the family $f_a(z)$ with $\rho=1/2$. Dimensions $[-6,6]\times [-6,6]$. Right: Blow-up showing the Mandelbrot sets (tiny yellow in the center).  Dimensions $[-3,3]\times [-3,3]$}
\end{figure}

The previous examples contained only two types of components, shell components and capture components.  This example also contains very small yellow Mandelbrot sets at the ends of the Cantor bouquets inside the big green capture component in the blow-up. 
  They correspond to parameters where the critical point behaves independently, while the asymptotic value  $a$ is attracted to 0.  There are also   both bounded and unbounded shell components in the picture.   In the blow-up,  we see bounded shell components bifurcating from the unbounded yellow fixed one on the left.   The virtual center of the period two component on the right is at infinity.  

 This example has only one pole, so infinity must be an asymptotic value. It could be thought as a "hybrid" between a slice in an entire family and a meromorphic slice in $\calm_\infty$.     The other examples have either infinitely many poles or none.   
\end{example1}

\subsection*{Further examples} 
Other one-dimensional slices recording the position of a free asymptotic value while keeping the other ones under control have been studied in the literature. These are mostly entire functions and we refer the reader to  \cite{berfag,deniz15} for these examples.

\section{Appendix}
Any meromorphic function that is a   branched cover of the sphere of finite degree $d$ is a rational function and so can be expressed as a quotient of relatively prime polynomials at least one of which has degree $d$.   The $2d+1$ coefficients of these polynomials define a natural embedding of the space of such functions  into $\C^{2d+1}$.  Up to affine conjugation then,  these functions are represented as a complex analytic manifold of dimension $2d-2$.   This family is denoted by $Rat_d$ in the literature.

Not all holomorphic families of meromorphic functions have such an obvious representation as a complex manifold.   In this appendix we consider some transcendental families that do.   These are the families that most of our examples are drawn from.   The functions have explicit expressions that  involve complex constants.  We show that under quasiconformal deformation, the deformed function has a similar expression in which only the constants are changed.  Thus the constants determine the embedding of the manifold into $\C^n$ for the appropriate $n$.  

  We begin with families of 
meromorphic functions with $p< \infty$ asymptotic values, no critical points  and a single essential singularity at infinity.  We denote these by $\calf_p$.  (See \cite{DK, KK,EreGab} for further discussion.)  

 These functions have a particularly nice characterization.  Recall that the Schwarzian derivative of a function $g$ is defined by 
 \begin{equation}\label{schw}
 S(g)(z)=(g''/g')' - \frac{1}{2}(g''/g')^{2}.
 \end{equation}
     Nevanlinna, \cite{Nev, Nev1}, Chap X1, proved
\begin{theorem}  \label{thm:Nev}
Every meromorphic function $g$ with   $p < \infty$ asymptotic values and $q < \infty$ critical points  has the property that its Schwarzian derivative is a rational function of degree $p +q -2$.  
If $q=0$, the Schwarzian derivative is a polynomial $P(z)$.  
In the opposite direction,  for every polynomial function $P(z)$ of degree $ p-2$,  the solution to
the Schwarzian differential equation $S(g)=P(z)$ is a meromorphic function with exactly $p$ asymptotic values and no critical points.
\end{theorem}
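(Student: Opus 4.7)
The plan is to establish the two directions separately, with the asymptotic analysis near infinity being the central difficulty in both.

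For the forward direction, I would start from the fact that $S(g)$ is Möbius invariant (so in particular $S(1/g)=S(g)$) and single-valued meromorphic on $\C$ whenever $g$ is. First I would locate the finite poles of $S(g)$. A point where $g$ is locally a holomorphic homeomorphism is regular for $S(g)$; using $S(1/g)=S(g)$, the same holds at a simple pole of $g$. At each branch point $z_0$ of local degree $m\geq 2$ (finite zero of $g'$ of multiplicity $m-1$, or a pole of $g$ of order $m\geq 2$ analyzed via $1/g$), writing the local normal form and computing directly gives a double pole for $S(g)$ with principal part $\tfrac{1-m^{2}}{2}(z-z_{0})^{-2}$. These branch points account for all the finite poles of $S(g)$ and contribute $q$ (counted with multiplicity) to the degree.

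The harder step is controlling $S(g)$ at infinity. Inside each logarithmic asymptotic tract $U_i$ over the asymptotic value $v_i$, Proposition~\ref{Singularities} together with the uniformization of a punctured disk yields a conformal isomorphism $\phi_{i}\colon U_{i}\to\lp$ such that $g(z)=v_{i}+\exp(\phi_{i}(z))$ on $U_{i}$. Since $S(\exp)\equiv -\tfrac{1}{2}$ and the Schwarzian of $v_i+(\cdot)$ equals that of $(\cdot)$, the chain rule yields
\[
S(g)\big|_{U_{i}} \;=\; -\tfrac{1}{2}(\phi_{i}')^{2} + S(\phi_{i}),
\]
so the growth of $S(g)$ in each tract is governed by the conformal geometry of $\phi_{i}$. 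Piecing together the $p$ tracts and the compact complement, one concludes that $S(g)$ has at worst a pole at $\infty$, hence is rational, and a careful tallying of orders shows its pole at $\infty$ has order $p-2$; combining with the $q$ finite double poles gives total degree $p+q-2$. The case $q=0$ then follows at once: $S(g)$ is entire of polynomial growth, hence a polynomial of degree $p-2$.

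For the converse, given a polynomial $P$ of degree $p-2$, I would consider the linear second order ODE
\[
u'' + \tfrac{1}{2}P(z)\,u = 0.
\]
Its only singular point is an irregular singularity at infinity, so any two linearly independent solutions $u_{1}, u_{2}$ are entire and the Wronskian $W = u_{1}u_{2}' - u_{1}'u_{2}$ is a nonzero constant. Set $g := u_{1}/u_{2}$. A direct computation verifies $S(g)=P$; the function $g$ is meromorphic on $\C$ (poles precisely at the zeros of $u_{2}$) and $g' = -W/u_{2}^{2}$ never vanishes, so $g$ has no critical points. The count of asymptotic values then reduces to Stokes analysis at the irregular singularity of the ODE: a punctured neighborhood of $\infty$ decomposes into $p$ Stokes sectors, in each of which a distinguished solution has a prescribed subdominant asymptotic expansion; the ratio $u_{1}/u_{2}$ tends to a definite finite limit in each sector, producing exactly $p$ (generically distinct) asymptotic values.

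The main obstacle in both directions is the asymptotic analysis at infinity. In the forward direction, matching the tract-by-tract growth of $S(g)$ across the boundaries between tracts and extracting the exact polynomial degree $p-2$ requires a careful combinatorial–analytic count. In the converse, the delicate input is the Stokes theory for an irregular singularity of rank $(p-2)/2$: one must identify the correct $p$ Stokes sectors, verify that the ratios of canonical solutions produce finite (rather than infinite) limits generically distinct across sectors, and confirm that these limits are genuine logarithmic asymptotic values rather than artifacts of the construction.
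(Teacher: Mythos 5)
Your argument for the forward direction takes a genuinely different route from the paper's. The paper proves that $S(g)$ is rational by approximating $g$ by rational maps with $p$ branch points, observing that the Schwarzians of the approximants are rational of degree determined only by the number of branch points, and passing to the limit. You instead argue locally: you compute the principal part $\frac{1-m^2}{2}(z-z_0)^{-2}$ at each branch point (correct, and a nice explicit computation the paper does not give) and analyze $S(g)$ on each logarithmic tract via the factorization $g=v_i+\exp\circ\phi_i$, getting $S(g)|_{U_i}=-\tfrac12(\phi_i')^2+S(\phi_i)$. Your approach localizes the source of each pole of $S(g)$ and makes the role of the tracts explicit, which the paper's limiting argument obscures; the paper's approach, in exchange, sidesteps the need to control $S(g)$ directly in a neighborhood of infinity. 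Both are sketches with the hard step left unproved: in your version, the claim that ``piecing together the $p$ tracts and the compact complement'' yields at worst a pole of $S(g)$ at infinity is exactly the crux of Nevanlinna's theorem. For $q>0$ the complement of the tracts in a punctured neighborhood of $\infty$ need not be compact, and even when $q=0$ the polynomial growth of $\phi_i'$ and $S(\phi_i)$ is not automatic --- it requires the global combinatorial structure (line complex) of the covering, which neither your sketch nor the paper's supplies. For the converse, both you and the paper reduce to the second-order linear ODE $u''+\tfrac12 P u=0$ and take $g=u_1/u_2$; the paper stops there and cites the literature for the count of asymptotic values, whereas you indicate the missing step (Stokes sectors at the irregular singularity at $\infty$, with $u_1/u_2$ tending to a finite limit in each). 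That is the correct mechanism and fills in more detail than the paper does at that point, though a full proof would still require justifying that the $p$ sectoral limits are logarithmic singular values and (generically) distinct.
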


 Since the theorem is classical and the proof is not well known, we sketch the proof  here and refer the  reader to the literature for details. 
\begin{proof} The   proof   follows from the construction of  a function with $p$ asymptotic values and no critical points as a limit of   rational functions with $p$ branch points.   Letting the   order of the branching at  some or all of these $p$ points increase, one obtains a sequence of  rational functions.  In the limit, the images of the branch points whose order goes to infinity become logarithmic singularities and
the limit function has finitely many branch points and finitely many logarithmic singularities.  The limit function is a parabolic covering map from the plane to itself.    The Schwarzian derivatives of the rational functions in the sequence are again rational functions with degree determined only by the number of branch points, not their order.  The limit of the Schwarzian derivatives is the Schwarzian derivative of the limit and so must be rational.

If all the branch points become asymptotic values in the limit, the limit function has  no critical points or critical values and hence its derivative never vanishes.  It follows from the definition of the Schwarzian that in the limit, it must be a polynomial. 
\end{proof}
 
 It is classical,  ( see e.g. \cite{hil}), that solutions to the  Schwarzian equation $S(g)=P(z)$  are related to solutions of the  linear second degree ordinary differential equation  
 \begin{equation} \label{ricatti}   
  w'' + \frac{1}{2} P(z) w = 0. 
  \end{equation}
Such an equation has a two dimensional space of holomorphic solutions.  If $w_{1}, w_{2}$ are a pair of linearly independent solutions, and 
\[ 
g= \frac{aw_{2} + bw_{1}}{cw_{2}+dw_{2}},  \,\,  a,b,c,d \in \C,  \, ad-bc = 1,
\] 
it is easy to check that $S(g)=P(z)$.   Moreover,  if $g$ is any solution  of the Schwarzian equation,  $w=\sqrt{1/g'}$ is a solution of the linear equation.  

 \begin{remark}\label{rem:cocycle}
One of the basic features of the Schwarzian derivative is that it satisfies the following cocycle relation:
if $f,g$ are meromorphic functions then
\begin{equation}\label{cocyle}
S(g ( f))(z) = S(g(f)) f'(z)^2 + S(f(z)).
\end{equation}
In particular, if $T$ is a M\"obius transformation,
$S(T(z))=0$ and $S(T\circ g(z) )= S((g(z))$ so that post-composing by $T$ does not change the Schwarzian.  Under pre-composition by a M\"obius transformation  the Schwarzian behaves like a quadratic differential.  In particular, pre-composing by an affine transformation multiplies the Schwarzian by a constant.  
\end{remark} 
This means that if we find a specific pair $(w_1^*,w_2^*)$ of solutions to the second order linear equation and set $g^* = w_2^*/w_1^*$, then every solution of the Schwarzian equation $S(g) = P(z)$ has the formula
\[ 
g= \frac{ag^* + b }{cg^*+d },  \,\,  a,b,c,d \in \C,  \, ad-bc = 1,
\] 
In particular, if $p=2$, $P(z)$ is identically constant.   Using one of the constants in an affine conjugation, we may assume $P \equiv -1/2$;  then  a specific  pair of  solutions to equation~(\ref{ricatti}) is $w_1^*=e^{-\frac{z}{2}}, w_2^*=e^{\frac{z}{2}}$ so we have $g^*=w_2^*/w_1^*=e^{z}$.   
The functions in the family $\calf_2$   have the form 
 \[ 
g(z)= \frac{ae^{z} + b }{ce^{z}+d },  \,\,  a,b,c,d \in \C,  \, ad-bc = 1.
\] 
 Since there is one more degree of freedom from the affine conjugation, we see this is a three dimensional family.   We have discussed several dynamically natural slices of $\calf_2$ in this paper.

For $\calf_3$, $P(z)$ is linear and  two specific solutions to the second order linear equation
\[
w''+\frac{1}{2} \zeta w = 0
\]
are given by the Airy functions\footnote{ These are named after the British astronomer G.B.Airy (1801-92).  Others have studied this equation and   solutions are expressed in terms of  Bessel functions and Gamma functions.  We won't write the formulas but will describe the properties we need.} $Ai(\zeta), Bi(\zeta)$.    Setting $g^*(\zeta)=Ai(\zeta)/Bi(\zeta)$  we obtain a solution with three asymptotic values,   $0, i$ and $ -i$.  Since we are interested in the dynamics, we may conjugate by the affine transformation $\zeta=rz+s$.   We may thus transform any function whose Schwarzian is linear, and hence any function in $\calf_3$, up to affine conjugation, to one given by the formula 
\[ 
g(\zeta)= \frac{a g^*(\zeta) + b }{cg^*(\zeta)+d },  \,\,  a,b,c,d \in \C,  \, ad-bc = 1.
\] 

The asymptotic values of $g$ are $\frac{b}{d},  \frac{ai +b }{ci+d}, \frac{-ai +b }{-ci+d }$.  

We could define a dynamically natural slice, for example,  by choosing the coefficients $a,b$ and $d$ so that 
\[
g(1)=1,  \mbox{ and } g'( 1)= 1/2.
\]
It is not hard to compute that the remaining coefficient $c$ is an affine function of the  asymptotic value $\frac{b}{d}$.  
 
 There are standard functions that solve the second order equation when the coefficient polynomial is of degree two or three, and they give formulas for functions in $\calf_4$ and $\calf_5$.  

The following  families of holomorphic  functions have rational Schwarzian derivatives and are  determined by their topological covering properties.  We list them  here and refer the reader to the cited literature for further discussion of them. 
\begin{itemize}
\item  
 Functions of type $Re^Q$ for $R$ rational and  $Q$ polynomial.  See  \cite{CJK1, Zak} for discussions with $R$ polynomial. 
  
\item  Functions of type $\int_{z_0}^{z} P(t)e^Q(t) dt$ for $P,Q$ polynomials.  See \cite[Th. 6.2]{baker84} and 
\cite[Prop. 2]{taniguchi}). 
\end{itemize}

\subsubsection*{Compositions}

 The following theorem shows how we can find more families for which the  functions have explicit formulas.   (See \cite{CK19} for further discussion of these functions).

\begin{theorem} \label{thm:function families} 
 Let $f_0 \in \calf_p$
  \begin{enumerate} 
\item  Suppose $g_0 = f_0 \circ Q_0$ is a function such that $Q_0$ is a polynomial of degree $d$ and suppose that $g$ is a meromorphic function quasiconformally conjugate to $g_0$.   
 Then $g$ can be expressed as $g=f \circ Q $ for some function $f \in \calf_p$ and some  polynomial $Q$  of degree $d$.   
\item Suppose $R_0 \in Rat_d$ is rational of degree $d$ and $h_0 = R_0 \circ f_0$.  Then if $h(z)$ is quasiconformally conjugate to $h_0$,  $h$ can be expressed as $h=R \circ f$ for functions $R \in Rat_d$ and $f \in \calf_p$.   
\end{enumerate}
\end{theorem}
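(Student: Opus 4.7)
The plan is to use the Measurable Riemann Mapping Theorem (MRMT) to split the quasiconformal conjugation into two pieces compatible with the given factorization $g_0 = f_0 \circ Q_0$. Writing $g = \phi \circ g_0 \circ \phi^{-1}$ with $\phi$ quasiconformal and $\mu = \mu_\phi$ its Beltrami coefficient, the hypothesis that $g$ is meromorphic is equivalent to $g_0^*\mu = \mu$ a.e., which factors as $Q_0^*(f_0^*\mu) = \mu$ by the holomorphic pullback chain rule. The key idea is to introduce the intermediate Beltrami coefficient $\mu_2 := f_0^*\mu$; it is automatically a Beltrami coefficient on $\C$ with $\|\mu_2\|_\infty < 1$, and it automatically satisfies the two factoring identities $f_0^*\mu = \mu_2$ and $Q_0^*\mu_2 = \mu$.

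By MRMT, I would choose a quasiconformal homeomorphism $\tau\colon\C \to \C$ with $\mu_\tau = \mu_2$, normalized to fix $0$ and $\infty$, and define
\[
Q := \tau \circ Q_0 \circ \phi^{-1}, \qquad f := \phi \circ f_0 \circ \tau^{-1}.
\]
Tracking conformal structures through the composition, $Q$ factors as $(\C,0) \xrightarrow{\phi^{-1}} (\C,\mu) \xrightarrow{Q_0} (\C,\mu_2) \xrightarrow{\tau} (\C,0)$ conformally, so $Q$ is holomorphic; symmetrically $f$ factors as $(\C,0) \xrightarrow{\tau^{-1}} (\C,\mu_2) \xrightarrow{f_0} (\hat\C,\mu) \xrightarrow{\phi} (\hat\C,0)$ conformally, so $f$ is meromorphic. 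The $\tau$'s cancel in the product: $f \circ Q = \phi \circ f_0 \circ Q_0 \circ \phi^{-1} = g$. Since $\phi$ and $\tau$ extend to homeomorphisms of $\hat\C$ fixing infinity, $Q\colon\C \to \C$ is proper of degree $\deg Q_0 = d$, hence a polynomial of degree $d$. Local injectivity is a topological property preserved under the homeomorphisms $\phi$ and $\tau^{-1}$, so $f$ has no critical points; asymptotic paths of $f$ correspond under $\tau^{-1}$ to those of $f_0$, so $f$ has exactly $p$ asymptotic values, yielding $f \in \calf_p$.

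For part (2) the construction is mirror-symmetric: starting from $h = \phi \circ R_0 \circ f_0 \circ \phi^{-1}$, set $\mu_2 := R_0^*\mu$, now a Beltrami coefficient on $\hat\C$; apply MRMT on the sphere to obtain a quasiconformal homeomorphism $\tau'\colon\hat\C \to \hat\C$ with Beltrami coefficient $\mu_2$; and define $R := \phi \circ R_0 \circ (\tau')^{-1}$ and $f := \tau' \circ f_0 \circ \phi^{-1}$. The analogous bookkeeping shows that $R$ is a holomorphic self-map of $\hat\C$ of degree $d$, hence a rational map in $Rat_d$, and that $f \in \calf_p$, with $R \circ f = h$.

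The main technical obstacle is keeping the Beltrami-coefficient bookkeeping straight: each coefficient must be identified with its ambient space ($\C$ for part (1), $\hat\C$ for part (2)), the invariance identity must be verified in the correct direction, and one must check that $\mu_2 = f_0^*\mu$ is well-defined despite the poles of $f_0$ (which form a discrete, measure-zero set). The ``no critical points'' claim for $f$ must be argued topologically rather than via the chain rule, since $\tau$ and $\phi$ are not holomorphic.
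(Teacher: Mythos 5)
Your decomposition is precisely the one the paper uses: you introduce the intermediate Beltrami coefficient $\mu_2 = f_0^*\mu$ (the paper's $\nu$), integrate it to a second quasiconformal homeomorphism $\tau$ (the paper's $\phi^\nu$), and then split the conjugation as $f = \phi\circ f_0\circ\tau^{-1}$, $Q = \tau\circ Q_0\circ\phi^{-1}$, checking holomorphy by tracking conformal structures and identifying $Q$ as a degree-$d$ polynomial and $f\in\calf_p$ by the topological invariance of branching and asymptotic data. The argument matches the paper's in every essential respect; your remarks on properness of $Q$, well-definedness of $\mu_2$ across the poles, and the topological (rather than chain-rule) verification that $f$ has no critical points just make explicit points the paper treats more briefly.
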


\begin{remark} The proof of part 1 of the theorem works if $Q_0$ is replaced by a rational function $R_0 \in Rat_d$ but then the  composed function $g_0$  
 has essential singularities at the poles of $R_0$.
\end{remark}
\begin{remark} Note that in part 1, $g = f \circ Q$ is a function with rational Schwarzian.  For each asymptotic value of $f$ of multiplicity $m$, $g$ has an asymptotic value of multiplicity $dm$; moreover $g$ has the same critical points as $Q$, namely $2d-2$ critical points counted with multiplicity, $d-1$ of which are at infinity.   In part 2, however, it is no longer true that the Schwarzian of the composed function $h=R \circ f$ is a rational function.  For example, it  may have infinitely many critical points.  \end{remark}
\begin{remark}  Observe that if $f\in\calf_p$ and $Q$ is a degree $d$ polynomial then  $Q\circ f$ and $f\circ Q$ are semiconjugate by a degree $d$ polynomial --  in fact by $Q$. 
\end{remark}

\begin{proof}
 The proof of both parts of the theorem is  essentially the same.  We therefore carry it out only for part 1.  
 
In part 1, let $ \phi^{\mu}$ be a quasiconformal homeomorphism with Beltrami coefficient $\mu$ such that 
\[
g_{\mu} = \phi^{\mu} \circ g_0  \circ (\phi^{\mu})^{-1} 
\]
 is meromorphic.   We can use $f_0$ to pull back the complex structure defined by $\mu:= \bar{\partial}\phi^\mu / \partial \phi^\mu$ to obtain a complex structure $\nu=f_0^*\mu$ such that the map 
\[f_{\mu} =  \phi^{\mu} \circ f_0 \circ (\phi^{\nu})^{-1} \]
is meromorphic.   Note that this is not a conjugacy since it involves two different homeomorphisms. 

We can now write 
\[
g_{\mu} = \phi^{\mu} \circ f_0 \circ (\phi^{\nu})^{-1} \circ \phi^{\nu} \circ Q_0 \circ (\phi^{\mu})^{-1}
\]
and set 
\[ 
Q_{\mu}=\phi^{\nu} \circ Q_0 \circ (\phi^{\mu})^{-1}.
\]
Again this is not a conjugacy but it is meromorphic since $g_{\mu}$ and $f_{\mu}$ are homeomorphisms.

The main point here is that although $f_{\mu}$ is not   a conjugate of $f_0$, since the quasiconformal maps $\phi^{\mu}$ and  $\phi^{\nu}$   are homeomorphisms,  the map $f_{\mu}$    is a meromorphic  map with the same topology as $f_0$; that is, it has $p$ asymptotic values and no critical values.   By Nevanlinna's theorem, Theorem~\ref{thm:Nev}, $f_{\mu}$ belongs to $\calf_p$.   Similarly,  although $Q_{\mu}$ is not defined as a conjugate of $Q_0$,   since the quasiconformal maps $\phi^{\nu}$ and  $\phi^{\eta}$   are homeomorphisms,   the map $Q_{\mu}$ is a meromorphic map with the same topology as $Q_0$; that is, it is a degree $d$ branched covering of the Riemann sphere with the same number of critical points and the same branching as $Q_0$ and thus it must be a polynomial of degree $d$. 
  \end{proof}

%
%

\bibliography{bibnuria.bib}

\end{document}